\documentclass[10pt]{article}

\usepackage{hyperref}

\usepackage{titlesec}

\usepackage{amsmath}

\usepackage{epsfig, amssymb, amsfonts, dsfont}
\usepackage{amsthm}
\usepackage{amstext}
\usepackage{amsopn}
\usepackage{mathrsfs}
\usepackage{subfigure}
\usepackage{graphicx}
\usepackage[left=2.3cm,top=2cm,right=2.3cm,bottom=2cm, nohead,foot=1cm]{geometry}
\usepackage[makeroom]{cancel}
\usepackage{color}
\usepackage{enumerate}
\usepackage{comment}
\usepackage{stackrel}
\usepackage{stmaryrd}
\usepackage{mathtools}
\usepackage{amsbsy}

\usepackage{tikz}
\usepackage{tikz-cd}

\SetSymbolFont{stmry}{bold}{U}{stmry}{m}{n}

\numberwithin{equation}{section}

\newtheorem{theorem}{Theorem}[section]

\newtheorem{lemma}[theorem]{Lemma}

\newtheorem{corollary}[theorem]{Corollary}
\newtheorem{proposition}[theorem]{Proposition}

\theoremstyle{definition}

\newtheorem{remark}[theorem]{Remark}

\newcommand{\R}{{\mathbb R}}

\newcommand{\Hess}{{\mathbf{H}}}

\DeclareFontFamily{U}{mathx}{\hyphenchar\font45}
\DeclareFontShape{U}{mathx}{m}{n}{
      <5> <6> <7> <8> <9> <10>
      <10.95> <12> <14.4> <17.28> <20.74> <24.88>
      mathx10
      }{}
\DeclareSymbolFont{mathx}{U}{mathx}{m}{n}
\DeclareMathSymbol{\bigtimes}{1}{mathx}{"91}

\usepackage{relsize}

\newcommand{\Tr}{{\textup{Tr}}}

\title{Pathwise structure of the three-dimensional attractive one-point interaction diffusion}

\date{  }

 \author{\textbf{Barkat Mian}\footnote{ {\tt bmian@utk.edu}} \vspace{.1cm}  \\  University of Tennessee Knoxville, Department of Mathematics   }

\begin{document}
\maketitle

\begin{abstract}
We study the pathwise behavior of the three-dimensional attractive one-point interaction diffusion whose law was constructed in~\cite{CranstonKoralovMolchanovVainberg2}, corresponding to the singular Schr\"odinger Hamiltonian
\[
\frac12\Delta+\frac{\beta}{2}\delta_0,
\qquad \beta>0.
\]
We identify a local stochastic differential equation satisfied by the process away from the origin and use it to construct a natural submartingale whose increasing component in the Doob-Meyer decomposition is supported on the set of times at which the process visits the origin. In particular, we show that the process visits the origin with positive probability and that the law conditioned on avoiding the origin is three-dimensional Wiener measure.
\end{abstract}

\vspace{.2cm}

\section{Introduction}
A three-dimensional heat equation with an attractive one-point
potential at the origin is formally given by
\[
\partial_t u
=
\frac12\Delta u+\frac{\beta}{2}\delta_0 u,
\]
where \(\Delta\) denotes the Laplacian on \(\mathbb R^3\), \(\delta_0\) denotes the Dirac mass at the origin, and \(\beta>0\) is the coupling constant. Equivalently, the corresponding three-dimensional Schr\"odinger Hamiltonian with a one-point potential at the origin is formally given by
\[
\frac12\Delta+\frac{\beta}{2}\delta_0,
\]
and it can be understood as a self-adjoint extension of the Laplacian restricted to \(C_0^\infty(\mathbb R^3\setminus\{0\})\). 
Alternatively, for each $\beta>0$, a one-point potential Schr\"odinger operator \(\Delta^{\beta}\) can be obtained as a norm resolvent limit of regularized Schr\"odinger operators of the form
\begin{align} \label{RegularizedHamiltonian}
\frac12\Delta+
\left(
\frac{\pi^2}{8}
+
4\pi\beta\,\varepsilon
\right)
\frac1{\varepsilon^2}
v\!\left(\frac{\cdot}{\varepsilon}\right),
\qquad \varepsilon>0,    
\end{align}
where \(v\) is a bounded compactly supported potential satisfying $\int_{\mathbb R^3}v(x)\,dx=\frac{4\pi}{3}$. The limiting operator \(\Delta^\beta\) acts as the free Laplacian away from the origin and has domain
\[
\mathcal D(\Delta^\beta)
=
\Bigg\{
x\mapsto
\phi_k(x)
-
\frac{\phi_k(0)}
{\beta+\frac{ik}{4\pi}}
\frac{e^{ik|x|}}{4\pi|x|}
:
\begin{array}{l}
\phi_k\in H^{2,2}(\R^3),\
k\in\mathbb C,\
\operatorname{Im}k>0,\\
k^2\notin\{-16\pi^2\beta^2\}\cup[0,\infty)
\end{array}
\Bigg\}.
\]
The corresponding fundamental solution
$P_t^\beta : (\R^3\setminus\{0\})\times(\R^3\setminus\{0\})
\longrightarrow [0,\infty)$ of the point interaction heat equation \(\partial_tu=\frac12\Delta^\beta u\) is explicitly given by
\begin{align}\label{DefPointKer3dBeta} 
P_t^{\beta}(x,y)
\,:=\,
P_t(x,y)
+
\frac{2t}{|x||y|}\,P_t(|x|+|y|)
+
\frac{8\pi\beta\,t}{|x||y|}
\int_{0}^{\infty} e^{\,4\pi\beta u}\,
P_t\!\big(u+|x|+|y|\big)\,du,
\end{align}
for \(t>0\) and \(x,y\in\mathbb R^3\setminus\{0\}\), where $P_t(x,y) := (4\pi t )^{-3/2} e^{-|x-y|^{2}/(4t)}$ denotes the free heat kernel, and, with a slight abuse of notation, $P_t(r) := (4\pi t )^{-3/2} e^{-r^{2}/(4t)}$ for \(r\ge0\) denotes its radial version. The representation~\eqref{DefPointKer3dBeta} appears in several works on three-dimensional point interaction; see, for example,~\cite{Albeverio,Fleischmann,GrummtKolb,APT}.

Fix $T>0$. For $t \in [0,T]$, define the function $Q_t^\beta:\R^3 \to [0,\infty]$ by
\begin{align}\label{DefH}
Q_t^\beta(x)
\,:=\,
\int_{\R^3} P_t^\beta(x,y)\,dy-1
\,=\,
\frac{2t}{\beta\,|x|}
\int_{0}^{\infty}
\big(e^{4\pi\beta w}-1\big)\,
P_t(|x|+w)\,dw
\end{align}
for \(x\in\R^3\setminus\{0\}\), while \(Q_t^\beta(0):=\infty\). For \(0\le s<t\le T\), define the function $\mathlarger{p}^{\,T,\beta}_{s,t}:
(\R^3\setminus\{0\})\times(\R^3\setminus\{0\})
\to [0,\infty]$ by the Doob transform of the point interaction heat kernel \(P_{t-s}^\beta(x,y)\) as follows,
\begin{align}\label{FirstTrans}
\mathlarger{p}^{\,T,\beta}_{s,t}(x,y)
\,:=\,
\frac{1+Q^{\beta}_{T-t}(y)}{1+Q^{\beta}_{T-s}(x)}
\, P^{\beta}_{t-s}(x,y),
\qquad x,y\in\R^3\setminus\{0\}.
\end{align}
Let $\Omega:=C([0,T],\R^3)$ be the space of continuous paths equipped with the topology of uniform convergence and its Borel
\(\sigma\)-algebra \(\mathcal B(\Omega)\). Denote by
\(\{X_t\}_{t\in[0,T]}\) the coordinate process on \(\Omega\), that is,
\[
X_t(\omega):=\omega(t),
\qquad
\omega\in\Omega,\quad t\in[0,T],
\]
and let \(\{\mathcal F_t^X\}_{t\in[0,T]}\) be the filtration generated by \(X\). For every \(x\in\mathbb R^3\setminus\{0\}\), there exists a probability measure
\(\mathbf P_x^{T,\beta}\) on \((\Omega,\mathcal B(\Omega))\) under which
\(X_0=x\) almost surely and the coordinate process
\(\{X_t\}_{t\in[0,T]}\) is a time-inhomogeneous Markov process with transition
density \(\mathlarger p_{s,t}^{\,T,\beta}\) given by~\eqref{FirstTrans}; this follows from~\cite[Thm.~2.3]{CranstonKoralovMolchanovVainberg2}, where the parameters are related by \(\gamma=4\pi\beta\) and the time scaling \(t\mapsto t/2\). More precisely, for each \(\varepsilon>0\), consider the Gibbs measure \(\mathbf P_{x}^{T,\beta,\varepsilon}\) associated with the regularized
Schr\"odinger operators~\eqref{RegularizedHamiltonian} defined by the
Radon--Nikodym derivative
\[
\frac{d\mathbf P_{x}^{T,\beta,\varepsilon}}
     {d\mathbf P_x}(\omega)
=
\frac{1}{Z^{T,\beta,\varepsilon}(x)}
\exp\left\{
\left(
\frac{\pi^2}{8}
+
4\pi\beta\,\varepsilon
\right)
\int_0^T
\frac1{\varepsilon^2}
v\!\left(\frac{\omega(t)}{\varepsilon}\right)\,dt
\right\},
\]
where \(\mathbf P_x\) denotes Wiener measure started from \(x\), and
\(Z^{T,\beta,\varepsilon}(x)\) is the associated partition function. They proved that the measures $\bigl\{
\mathbf P_{x}^{T,\beta,\varepsilon}
\bigr\}_{\varepsilon>0}$ converge weakly to \(\mathbf P_x^{T,\beta}\) as \(\varepsilon\downarrow0\). See also~\cite{CranstonKoralovMolchanovVainberg1} for related work on Gibbs measures for the compactly supported potential case.

Since the transition density \(\mathlarger p_{s,t}^{\,T,\beta}(x,y)\)
in~\eqref{FirstTrans} is constructed from the point interaction heat kernel \(P_t^\beta(x,y)\), it is natural to expect that the corresponding diffusion \(\{X_t\}_{t\in[0,T]}\) under the law \(\mathbf P_x^{T,\beta}\) should interact non-trivially with the origin. However, this does not follow directly from the weak convergence construction of \(\mathbf P_x^{T,\beta}\), and Theorem~\ref{ThmSubMART}(i) confirms that the process does indeed visit the origin with positive probability. Another natural question concerns the stochastic dynamics of the diffusion. For fixed \(s\in[0,T]\) and \(x\in\mathbb R^3\setminus\{0\}\), the map $(t,y)\in[s,T]\times(\mathbb R^3\setminus\{0\}) \longmapsto \mathlarger p_{s,t}^{T,\beta}(x,y)$ satisfies the forward Kolmogorov equation away from the origin,
\begin{align}\label{KolmogorovForJ}
\frac{\partial}{\partial t}
\mathlarger p_{s,t}^{T,\beta}(x,y)
=
\frac12\Delta_y
\mathlarger p_{s,t}^{T,\beta}(x,y)
-
\nabla_y\cdot
\Big[
b_{T-t}^\beta(y)
\mathlarger p_{s,t}^{T,\beta}(x,y)
\Big],
\qquad y\neq0,
\end{align}
where the
drift is given by \(b_t^\beta(x):=\nabla_x\log\bigl(1+Q_t^\beta(x)\bigr)\). Thus, the process should satisfy a stochastic differential equation of the form
\begin{align}\label{SDEToSolve}
dX_t
=
dW_t
+
b_{T-t}^\beta(X_t)\,dt ,
\end{align}
with respect to a three-dimensional standard Brownian motion \(W\). However, the explicit formulas for \(P_t^\beta\) and \(Q_t^\beta\) show that, for fixed \(s\in(0,T)\), \(x\neq0\), and \(y\to0\),
\[
P_s^\beta(x,y)\asymp \frac1{|y|},
\qquad
1+Q_{T-s}^\beta(y)\asymp \frac1{|y|},
\qquad
|b_{T-s}^\beta(y)|\asymp \frac1{|y|}.
\]
Consequently, the absolute drift integral
\[
\mathbf E_x^{T,\beta}
\left[
\int_0^T
\big|b_{T-s}^{\beta}(X_s)\big|\,ds
\right]
=
\int_0^T
\int_{\R^3}
\mathlarger p_{0,s}^{\,T,\beta}(x,y)
\big|b_{T-s}^{\beta}(y)\big|\,dy\,ds
\]
may fail to be finite, since the integrand has a \(|y|^{-3}\)-type singularity as \(y\to0\). There is a substantial literature on stochastic differential equations with singular drifts; see, for instance,~\cite{Fukushima,Streit,Trutnau,Krylov,Zhang,Jin,Kinzebulatov}. However, none of the available general results appears to apply directly to the SDE~\eqref{SDEToSolve}. In the present paper, our goal is to study the visitation of the process to the origin, and we therefore establish the SDE only locally on excursion intervals away from the origin; see Proposition~\ref{PropositionLocalizedSDEAwayOrigin}. Nakashima~\cite{Nakashima} recently obtained an alternative probabilistic representation of the three-dimensional heat equation with a one-point interaction through a Feynman--Kac type formula; however, whether this local SDE description can be extended through visits to the origin remains an interesting open problem. In the corresponding two-dimensional model, the analogous SDE was solved globally in~\cite[Prop.~2.6]{CM}; see also~\cite{Chen1,Chen2,CM2,Mian} for related studies of the planar one-point interaction diffusion.

\section{Overview of the main results} \label{SectionMainResults}

In this section we give an overview of the main results of the paper.
Section~\ref{SubsectionStochdDiff} identifies a local stochastic differential equation satisfied by the diffusion \(\{X_t\}_{t\in[0,T]}\) away from the origin. Section~\ref{SubsectionSubMart} introduces a natural
submartingale associated with the process and studies its Doob-Meyer
decomposition. Finally, Section~\ref{SubsectionVisitation} uses the submartingale characterization to analyze the interaction of the diffusion with the origin, including its probability of visiting the origin.

\subsection{A local stochastic differential equation}
\label{SubsectionStochdDiff}

Fix \(T,\beta>0\). Given \(t\in[0,T]\), recall that the drift
\(b_t^\beta:\mathbb R^3\setminus\{0\}\to\mathbb R^3\) is given by
\begin{align}\label{DefDriftFun}
b_t^\beta(x)
:=
\nabla\log\bigl(1+Q_t^\beta(x)\bigr)
=
\frac{\nabla Q_t^\beta(x)}
     {1+Q_t^\beta(x)}, \qquad x\neq0,
\end{align}
where \(\nabla=(\partial_{x_1},\partial_{x_2},\partial_{x_3})\) denotes the gradient operator. Since \(Q_t^\beta(0)=\infty\), the
drift is not defined at the origin. For \(x\in\mathbb R^3\setminus\{0\}\), let \(\mathcal N_x^T\) denote the collection of all subsets of \(\mathbf P_x^{T,\beta}\)-null sets and define
\begin{align}\label{Augmented}
\mathcal F_t^{T,x}
:=
\sigma\big\{
\mathcal F_t^X\cup\mathcal N_x^T
\big\},
\qquad
\mathcal B_x^T
:=
\sigma\big\{
\mathcal B(\Omega)\cup\mathcal N_x^T
\big\}.
\end{align}
The measure \(\mathbf P_x^{T,\beta}\) extends uniquely to the augmented \(\sigma\)-algebra \(\mathcal B_x^T\), and we use the same symbol for the extension. The augmented filtration $\mathcal F^{T,x} := \{\mathcal F_t^{T,x}\}_{t \in [0, T]}$ will be used throughout this work. The proof of the following proposition is in Section~\ref{SectionWeakSolConst}.

\begin{proposition}\label{PropositionLocalizedSDEAwayOrigin}
Fix \(T,\beta>0\) and \(x\in\mathbb R^3\setminus\{0\}\). There exists a three-dimensional standard Brownian motion $\{W_t^{T,\beta}\}_{t \in [0,T]}$ on the probability space
\((\Omega,\mathcal B_x^T,\mathbf P_x^{T,\beta})\)
with respect to the filtration $\{\mathcal F_t^{T,x}\}_{t \in [0, T]}$ such that on every interval \([a,b]\subset[0,T]\) such that \(X_s\neq0\) for all \(s\in[a,b]\), the process \(X\) satisfies the stochastic differential equation~\eqref{SDEToSolve} with \(W\) replaced by \(W^{T,\beta}\).
\end{proposition}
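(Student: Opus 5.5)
The plan is to build the Brownian motion by localization. First we stop the process before it gets close to the origin, and on those stopped pieces we identify the martingale part of \(X\) through the martingale problem. Then we glue the pieces together and fill in the time spent near the origin with an independent Brownian motion.

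\emph{Localization and the local martingale problem.} For \(n\ge1\), let \(U_n:=\{y:|y|>1/n\}\). Define stopping times recursively: \(\sigma^n_0:=0\); \(\tau^n_k\) is the first time after \(\sigma^n_{k}\) at which \(|X_t|\le 1/n\); and \(\sigma^n_{k+1}\) is the first time after \(\tau^n_k\) at which \(|X_t|\ge 2/n\). Continuity of paths makes these finite in number on \([0,T]\).

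For \(f\in C_c^\infty(\mathbb R^3)\) with \(f\equiv0\) near the origin, define the time-dependent generator
\[
L_t f(y):=\tfrac12\Delta f(y)+b^\beta_{T-t}(y)\cdot\nabla f(y).
\]
The forward equation~\eqref{KolmogorovForJ} is the adjoint of the backward equation \(\partial_s u+L_s u=0\). Equivalently, \(1+Q^\beta_{T-t}\) solves the backward heat equation off the origin, and \(p^{T,\beta}\) is the Doob \(h\)-transform. Using either description, we show that
\[
f(X_t)-\int_0^t L_sf(X_s)\,ds
\]
is an \(\mathcal F^{T,x}_t\)-martingale. Concretely, we take the Markov property \(\mathbf E[f(X_t)\mid\mathcal F_s]=\int p_{s,t}(X_s,y)f(y)\,dy\), differentiate in \(t\), and integrate by parts in \(y\). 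No boundary terms arise because \(f\) vanishes near \(0\), and \(b^\beta\) is smooth and bounded on \(\operatorname{supp} f\) uniformly for \(t\le T\). We then need \(\int_0^T\int p_{0,s}(x,y)|L_sf(y)|\,dy\,ds<\infty\), which is clear since \(L_sf\) is bounded on a compact set away from \(0\). The one delicate point is \(s\to T\), where \(Q_{T-s}\to0\); there \(b\) stays bounded on compacts away from the origin, which follows from formula~\eqref{DefH}.

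Next we choose \(f\) equal to the coordinate functions \(y_i\) and the products \(y_iy_j\) on \(\{1/n\le|y|\le R\}\), and localize also in \(R\). The stopped processes
\[
M^{n}_t:=X_{t}-X_{\sigma}-\int_{\sigma}^{t}b^\beta_{T-s}(X_s)\,ds,\qquad t\in[\sigma^n_k,\tau^n_k],
\]
are then continuous local martingales with \(\langle M^{n,i},M^{n,j}\rangle_t=\delta_{ij}\,(t-\sigma^n_k)\). The drift integral is finite because \(b\) is bounded where \(|X|\ge1/n\).

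\emph{Gluing and Lévy's characterization.} Set \(G_n:=\bigcup_k[\sigma^n_k,\tau^n_k)\), which is a predictable set. Let \(B\) be an independent three-dimensional Brownian motion, obtained by enlarging the space. The paper insists on \((\Omega,\mathcal B^T_x)\), so if enlargement is undesirable we will instead show that \(\text{Leb}\{t:X_t=0\}=0\); this follows from the density of \(X_t\), which is \(\asymp1/|y|\) and hence integrable. Then \(\mathbf 1_{\{X_s\ne0\}}=1\) for almost every \(s\), and no filler Brownian motion is needed in the limit. Define
\[
W^{(n)}_t:=\int_0^t\mathbf 1_{G_n}(s)\,dX_s^{\mathrm{mart}}+\int_0^t\mathbf 1_{G_n^c}(s)\,dB_s.
\]
The sets \(G_n\) increase to \(\{s:X_s\neq0\}\) up to null sets, and the definitions are consistent across \(n\): on \(G_n\cap G_m\) the martingale increments agree, because both equal \(dX-b\,dt\). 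Hence \(W^{(n)}\) converges in \(L^2\) by Doob's inequality. The limit \(W\) is a continuous martingale with \(\langle W^i,W^j\rangle_t=\delta_{ij}t\), so \(W\) is a Brownian motion by Lévy's theorem.

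Finally, let \([a,b]\) be an interval on which \(X\ne0\). By compactness \(\min_{[a,b]}|X|>0\), so \([a,b]\subset G_n\) for all large \(n\), and on \([a,b]\) we have \(dX=dW+b\,dt\), which is the SDE~\eqref{SDEToSolve}.

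The main obstacle I expect is making the gluing rigorous. Intervals touching the origin are never covered by any \(G_n\), so the limit must be justified through the Lebesgue-null visit set. Equally, the martingale-problem step has to be verified carefully using only the transition density, with the integration by parts in \(y\) handled near the singular region.
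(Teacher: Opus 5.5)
Your proposal is correct, but it builds $W^{T,\beta}$ differently from the paper.

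\textbf{The paper's route.} The paper never localizes $X$ itself. It uses the space--time harmonic map $\mathbb Q_t^\beta(x)=x/(1+Q_t^\beta(x))$, which extends continuously by $0$ at the origin. It shows that $Y_t^{T,\beta}=\mathbb Q_{T-t}^\beta(X_t)$ is a \emph{global} square-integrable martingale on $[0,T]$ (Lemmas~\ref{LemmaL2IncrementY}, \ref{LemmaMartFUN}, \ref{LemmaMartI}) and computes its bracket. It then defines $W^{T,\beta}=\int_0^t\mathbf 1_{\{X_s\neq0\}}(\nabla^\dagger\mathbb Q_{T-s}^\beta(X_s))^{-1}dY_s^{T,\beta}$ in one stroke and applies L\'evy's theorem through the Lebesgue-null visit set (Remark~\ref{RemarkOriginNull}). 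The SDE on an excursion interval then follows from It\^o's formula for the inverse diffeomorphism $\overleftarrow{\mathbb Q}_{T-t}^\beta$ together with Lemma~\ref{LemmaInversePhiIdentities}.

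\textbf{Your route.} You solve the martingale problem for $X$ directly, using only test functions supported away from the origin. This gives $X-\int b\,ds$ as a local martingale with bracket $tI_3$ on each $[\sigma_k^n,\tau_k^n)$, and you then glue these pieces. The gluing works as you describe:
\begin{itemize}
\item $G_n\subseteq G_{n+1}$, because the last visit to $\{|y|\le 1/(n+1)\}$ before time $s$ precedes the last visit to $\{|y|\le 1/n\}$.
\item $\bigcup_n G_n=\{s:X_s\neq0\}$ exactly.
\item Doob's inequality gives $\mathbf E\sup_t|W^{(m)}_t-W^{(n)}_t|^2\le C\,\mathbf E\,\mathrm{Leb}(G_m\setminus G_n)\to0$.
\end{itemize}
If you drop the filler $B$, each $W^{(n)}$ is an explicit finite sum of path increments. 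Passing to an a.s.\ uniformly convergent subsequence then yields $dX=dW+b\,dt$ on all excursion intervals simultaneously.

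One small slip: for $t<T$ the density of $X_t$ near $0$ is of order $|y|^{-2}$, not $|y|^{-1}$, because both $P_t^\beta(x,y)$ and $1+Q_{T-t}^\beta(y)$ blow up like $|y|^{-1}$. This is harmless, since you only need absolute continuity of the law of $X_t$.

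\textbf{What each route buys.} Yours is more economical. It needs only three inputs:
\begin{itemize}
\item the forward equation away from the origin;
\item boundedness of $b_t^\beta$ on compacts of $\mathbb R^3\setminus\{0\}$, uniformly in $t\in(0,T]$;
\item the null visit set.
\end{itemize}
It never invokes~\eqref{KolmogorovForJBack} for a test function that fails to be compactly supported in $\mathbb R^3\setminus\{0\}$. The paper's bracket computation for $Y^{T,\beta}$ does exactly that, and it also rests on Lemma~\ref{LemmaMartFUN}, whose proof is omitted. The paper's route buys more structure. It produces a globally defined martingale $Y^{T,\beta}$ that passes through visits to the origin, with $dY^{T,\beta}=\nabla^\dagger\mathbb Q^\beta_{T-t}(X_t)\,dW^{T,\beta}$ on all of $[0,T]$. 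It also gives a closed-form integrand for $W^{T,\beta}$ with no limiting procedure, which is a more natural starting point for the open question of continuing the SDE through the origin.
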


\subsection{A submartingale characterization}
\label{SubsectionSubMart}

For $t \in [0,T]$, define the function $S_t^{\beta}:\R^2\rightarrow [0,1]  $ with
\begin{align}\label{DefpFunction}
S_t^{\beta}(x):=\frac{1}{1+Q_t^{\beta}( x)} .
\end{align}
The function \(S_t^\beta\) satisfies the following partial differential equation for \(t>0\)
\begin{align}\label{PartialForP}
\frac{\partial}{\partial t}\,S_t^{\beta}(x)\,=\,  \frac{1}{2}\, \Delta \,S_t^{\beta}(x)\,+\, b_t^{\beta}(x)\cdot (\nabla\,S_t^{\beta})(x)\,, \hspace{.5cm} x\in \R^3\backslash \{0\}\,,
\end{align}
because $\frac{\partial}{\partial t} Q_t^{\beta}( x)= \frac{1}{2} \Delta Q_t^{\beta}( x)$. Consider the process $\{\mathbf S^{T,\beta}_t\}_{t\in [0,T]}$ given by $ \mathbf S^{T,\beta}_t:=S_{T-t}^{\beta}(X_t)$. The proof of the following proposition is in Section~\ref{ProofSubMartCharacterization}.

\begin{proposition}\label{PropSubMart}
Fix \(T,\beta>0\) and \(x\in\mathbb R^3\setminus\{0\}\). The process
\(\{\mathbf S_t^{T,\beta}\}_{t\in[0,T]}\) is a bounded, continuous
\(\mathbf P_x^{T,\beta}\)-submartingale with respect to
\(\{\mathcal F_t^{T,x}\}_{t\in[0,T]}\). Moreover, if
\(\mathbf S^{T,\beta}=\mathbf M^{T,\beta}+\mathbf A^{T,\beta}\) denotes its
Doob--Meyer decomposition, then the increasing component
\(\mathbf A^{T,\beta}\) is constant during the excursions of \(X\) away from
the origin, and the martingale component satisfies
\(\mathbf M^{T,\beta}\in L^2(\mathbf P_x^{T,\beta})\) and is given by
\begin{align}\label{MartingaleFormula}
\mathbf M_t^{T,\beta}
=
S_T^\beta(x)
-
\int_0^t
\mathbf S_s^{T,\beta}
b_{T-s}^{\beta}(X_s)\cdot dW_s^{T,\beta},
\qquad t\in[0,T].
\end{align}
\end{proposition}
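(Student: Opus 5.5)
The proof rests on one observation: $\mathbf S_t^{T,\beta}$ is exactly the reciprocal of the Doob transform weight. For $y\neq0$,
\[
\mathlarger p_{s,t}^{\,T,\beta}(x,y)\,S^\beta_{T-t}(y)=S^\beta_{T-s}(x)\,P^\beta_{t-s}(x,y).
\]
Integrating in $y$, and using that $\int P^\beta_{t-s}(x,y)\,dy=1+Q^\beta_{t-s}(x)\ge 1$, gives
\[
\mathbf E_x^{T,\beta}\big[\mathbf S_t^{T,\beta}\,\big|\,\mathcal F_s^{T,x}\big]
=S^\beta_{T-s}(X_s)\,\big(1+Q^\beta_{t-s}(X_s)\big)\ \ge\ \mathbf S_s^{T,\beta}
\qquad\text{on }\{X_s\neq0\}.
\]
This uses the Markov property under $\mathbf P_x^{T,\beta}$. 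On $\{X_s=0\}$ the inequality is trivial, since there $\mathbf S_s^{T,\beta}=0$ because $Q(0)=\infty$. The set $\{X_t=0\}$ for fixed $t$ is null, since the transition density is absolutely continuous. Hence $\mathbf S^{T,\beta}$ is a submartingale.

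Boundedness in $[0,1]$ is immediate from $Q\ge0$. For continuity, we note that $(t,y)\mapsto S^\beta_{T-t}(y)$ extends continuously to $[0,T]\times\mathbb R^3$ with value $0$ at $y=0$. This holds because $Q^\beta_{t}(y)\to\infty$ as $y\to0$, uniformly for $t$ in compacts of $(0,T]$, by the $1/|y|$ factor in~\eqref{DefH}. At $t=T$ we have $S_0\equiv1$ off the origin; the behavior of $Q_{T-t}$ as $t\uparrow T$ near $0$ needs a separate check, and we expect that $Q_{\epsilon}(y)\to0$ for $y\neq0$ as $\epsilon\to0$. Composing with the continuous path $X$ gives a continuous process. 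The Doob--Meyer decomposition exists because $\mathbf S^{T,\beta}$ is bounded, hence of class (D). The filtration is augmented, and right-continuity follows from the Feller-type property of the transition density.

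To identify the decomposition, we localize away from the origin. We fix $\delta>0$ and consider stopping times $\sigma<\tau$ such that $|X|\ge\delta$ on $[\sigma,\tau]$ and $\tau\le T-\delta$, for example successive hitting times of the spheres $\{|y|=\delta\}$ and $\{|y|=2\delta\}$. On $\{|y|\ge\delta\}\times[0,T-\delta]$ the function $S^\beta_{T-t}$ is $C^{1,2}$. By Proposition~\ref{PropositionLocalizedSDEAwayOrigin}, Itô's formula applies on $[\sigma,\tau]$. Together with the PDE~\eqref{PartialForP} evaluated at time $T-t$, it makes the $dt$-terms cancel, and we obtain
\[
d\mathbf S_t^{T,\beta}=\nabla S^\beta_{T-t}(X_t)\cdot dW_t^{T,\beta}=-\mathbf S_t^{T,\beta}\,b^\beta_{T-t}(X_t)\cdot dW_t^{T,\beta},
\]
since $\nabla S=-S\,b$. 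Hence on every excursion interval the process $\mathbf S^{T,\beta}$ moves as the stochastic integral in~\eqref{MartingaleFormula}.

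The integrand $\mathbf S_s\,b_{T-s}(X_s)=-\nabla S_{T-s}(X_s)$ is bounded near the origin, because $S\approx c|y|$ there. This is the key point: the singular drift is harmless after multiplication by $S$. For $s$ near $T$ it is also bounded, since $\nabla S_\epsilon$ is controlled. So the stochastic integral is defined on all of $[0,T]$ as an $L^2$ martingale. We set $\mathbf M$ by~\eqref{MartingaleFormula} and $\mathbf A:=\mathbf S-\mathbf M$. Then $\mathbf A$ is continuous, and by the localized Itô computation it is constant on each interval where $X\neq0$. The zero set $\{t:X_t=0\}$ has Lebesgue measure zero, and the integral $\int \mathbf 1_{\{X_s=0\}}\,dW$ vanishes.

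It remains to show that $\mathbf A$ is increasing, which then yields the Doob--Meyer decomposition by uniqueness. Writing $\mathbf S=\mathbf M'+\mathbf A'$ for the true Doob--Meyer decomposition, the difference $\mathbf M-\mathbf M'=\mathbf A'-\mathbf A$ is a continuous martingale. We show it has zero quadratic variation. Its quadratic variation can only charge the zero set, because off the origin both $\mathbf A$ and $\mathbf A'$ are locally finite variation there; the latter by applying Doob--Meyer to the stopped process. This step is the main obstacle: rigorously transferring the local identification to a global one across the possibly uncountable, fractal zero set. The approach I would try first is the following. I would use that $d\langle\mathbf S\rangle_t$ and $d\langle\mathbf M\rangle_t=|\nabla S_{T-t}(X_t)|^2\,dt$ are both absolutely continuous in time, and that the zero set is Lebesgue-null. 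These facts should force $\langle\mathbf M-\mathbf M'\rangle\equiv0$, hence $\mathbf A=\mathbf A'$ is increasing. The near-$T$ boundary behavior of $\nabla S$ also requires care.
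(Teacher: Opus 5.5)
Your first two steps agree with the paper. These are the submartingale property via $\int\mathlarger p_{s,t}^{T,\beta}(X_s,y)S^\beta_{T-t}(y)\,dy=\mathbf S_s^{T,\beta}\int P^\beta_{t-s}(X_s,y)\,dy\ge\mathbf S_s^{T,\beta}$, and It\^o plus \eqref{PartialForP} on excursion intervals. You diverge in proving that $\mathbf A^{T,\beta}$ is increasing. The paper builds explicit approximants $\mathbf A_t^{T,\beta,\varepsilon}=\sum_n\mathbf S^{T,\beta}_{t\wedge\kappa_n^{\uparrow,\varepsilon}}\mathbf 1_{\{\kappa_n^{\downarrow,\varepsilon}\le t\}}$ and gets uniform $L^2$ convergence to $\mathbf A^{T,\beta}$ from Lemma~\ref{LemmaMartingaleApproximationEpsilon}. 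It then bounds their downward moves by $\sup_{|y|<\varepsilon,\,r\ge\delta}S_r^\beta(y)\to0$. You instead compare with the abstract decomposition $\mathbf S=\mathbf M'+\mathbf A'$ and try to kill $N:=\mathbf M-\mathbf M'$ through its bracket. That route can be completed, but as written it has two genuine gaps.

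\textbf{First gap: the integrand is not bounded near $T$.} Your claim that $\nabla S^\beta_{T-s}=-S^\beta_{T-s}b^\beta_{T-s}$ stays bounded for $s$ near $T$ is false. By Lemma~\ref{LemSmallTimeGaussianEstimate}(i), $S^\beta_\epsilon(y)\ge(1+C)^{-1}$ at $|y|=\sqrt\epsilon$, while $S^\beta_\epsilon(y)\to0$ as $y\to0$. Hence $\sup_y|\nabla S^\beta_\epsilon(y)|\ge c\,\epsilon^{-1/2}$. The integrand is uniformly bounded only on $[0,T-\delta]$. So your argument gives neither the martingale property of $\mathbf M^{T,\beta}$ on all of $[0,T]$ nor $\mathbf M^{T,\beta}\in L^2$. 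In the paper this is exactly the job of Lemma~\ref{LemmaKbounds1}, which needs the kernel bound \eqref{PtBetaBounds} and a split at $|y|=\sqrt{T-s}$. Within your approach you can sidestep that estimate:
work on $[0,T-\delta]$ and identify $\mathbf M=\mathbf M'$ there. Then $\mathbf E_x^{T,\beta}\int_0^{T-\delta}|\nabla S^\beta_{T-s}(X_s)|^2ds=\mathbf E_x^{T,\beta}[(\mathbf M'_{T-\delta}-\mathbf M'_0)^2]$. This is bounded uniformly in $\delta$, because the martingale part of a bounded submartingale is square integrable. Letting $\delta\downarrow0$ gives the $L^2$ claim.

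\textbf{Second gap: the crux is asserted, not proved.} Nothing you have established shows that $d\langle\mathbf S\rangle$ is absolutely continuous, or even that it puts no mass on $Z=\{t:X_t=0\}$. You have no formula for $\mathbf S$ on $Z$, and that is where the whole difficulty lies. The missing ingredient is the occupation times formula for the continuous semimartingale $\mathbf S^{T,\beta}$:
$\int_0^t\mathbf 1_{\{\mathbf S_s=0\}}\,d\langle\mathbf S\rangle_s=\int_{\mathbb R}\mathbf 1_{\{0\}}(a)L^a_t\,da=0$, where $L^a$ is its local time. Combine this with $\{s<T:\mathbf S_s^{T,\beta}=0\}=Z\cap[0,T)$.
\begin{itemize}
\item Kunita--Watanabe gives $d\langle N\rangle\le2\,d\langle\mathbf M\rangle+2\,d\langle\mathbf S\rangle$. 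Since $d\langle\mathbf M\rangle\ll ds$ and $Z$ is Lebesgue-null, $\langle N\rangle$ does not charge $Z$.
\item On each excursion interval $N=\mathbf A'-\mathbf A$ has finite variation. Here $\mathbf A'$ is globally increasing, so no stopped Doob--Meyer argument is needed. Hence $\langle N\rangle$ does not charge $Z^c$ either.
\end{itemize}
Therefore $N\equiv N_0=0$.

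Completed this way, your argument is softer than the paper's and even makes Lemma~\ref{LemmaKbounds1} and its supporting estimates unnecessary. The cost is invoking semimartingale local time. The paper's route avoids local-time theory and gives a concrete approximation scheme for $\mathbf A^{T,\beta}$, but pays for it with the quantitative estimate.
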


\begin{remark}\label{RemarkSDES}
Since the increasing component \(\mathbf A^{T,\beta}\) in
Proposition~\ref{PropSubMart} is continuous, remains constant during excursions
of \(X\) away from the origin, and satisfies
\(\mathbf A_0^{T,\beta}=0\), it follows that
\(\mathbf A_t^{T,\beta}=0\) for all \(0\le t<\tau\), where $\tau:=\inf\{t\in[0,T]:X_t=0\}$. Consequently,
\begin{align}\label{SDEforSuptoTau}
d\mathbf S_t^{T,\beta}
=
d\mathbf M_t^{T,\beta}
\overset{\eqref{MartingaleFormula}}{=}
-
\mathbf S_t^{T,\beta}
b_{T-t}^{\beta}(X_t)\cdot dW_t^{T,\beta},
\qquad 0\le t<\tau.
\end{align}
\end{remark}

\subsection{Visitation of the origin and conditional laws}
\label{SubsectionVisitation}

Recall that
\[
\tau:=\inf\{t\in[0,T]:X_t=0\}
\]
denotes the first time at which the diffusion \(\{X_t\}_{t\in[0,T]}\)
visits the origin. The proof of following theorem is in Section~\ref{SubsectionThmSubMART}.

\begin{theorem}\label{ThmSubMART}
Fix \(T,\beta>0\) and \(x\in\mathbb R^3\setminus\{0\}\). Then the following hold.

\begin{enumerate}[(i)]

\item $\displaystyle \mathbf P_x^{T,\beta}[\tau\le T]
=
\frac{Q_T^\beta(x)}
{1+Q_T^\beta(x)}$

\item
Under the conditional law \(\mathbf P_x^{T,\beta}[\;\cdot\mid\tau>T]\), the process
\(\{X_t\}_{t\in[0,T]}\) is a three-dimensional Brownian motion started from \(x\).

\item Let \(\sigma\) be an \(\mathcal F^X\)-stopping time such that
\(\mathbf P_x^{T,\beta}(\sigma<\tau)>0\). Then, for every
\(A\in\mathcal F_{\sigma}^{T,x}\),
\[
\mathbf P_x^{T,\beta}
\big[
A\,\big|\,\sigma<\tau
\big]
=
\frac{
\mathbf E_x
\left[
\mathbf 1_A
\big(
1+Q_{T-\sigma}^{\beta}(X_{\sigma})
\big)
\right]
}{
\mathbf E_x
\left[
1+Q_{T-\sigma}^{\beta}(X_{\sigma})
\right]
},
\]
where \(\mathbf E_x\) denotes expectation with respect to the Wiener measure \(\mathbf P_x\) on \(\Omega=C([0,T],\mathbb R^3)\), started from \(x\).

\item
Conditioned on the event \(\{\tau\le T\}\), the hitting time \(\tau\) has
survival function
\[
\mathbf P_x^{T,\beta}
\big[
\tau>s
\,\big|\,
\tau\le T
\big]
=
\frac{
(P_s*Q_{T-s}^\beta)(x)
}{
Q_T^\beta(x)
},
\qquad 0\le s\le T,
\]
where \(*\) denotes convolution on \(\mathbb R^3\) and \(P_s\) is the
three-dimensional free heat kernel.
\end{enumerate}
\end{theorem}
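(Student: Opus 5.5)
The plan is to base everything on one identity: a change-of-measure formula on \(\{\sigma<\tau\}\) relating \(\mathbf P_x^{T,\beta}\) to Wiener measure. This is part (iii), and (i), (ii) and (iv) then follow from it.

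\emph{Step 1 (key identity).} Set \(h_t(y):=1+Q_{T-t}^\beta(y)=1/S_{T-t}^\beta(y)\). By Remark~\ref{RemarkSDES}, on \([0,\tau)\) we have \(d\mathbf S_t^{T,\beta}=-\mathbf S_t^{T,\beta}b_{T-t}^\beta(X_t)\cdot dW_t^{T,\beta}\). By Itô's formula, \(d(1/\mathbf S_t)=(1/\mathbf S_t)\,b_{T-t}^\beta(X_t)\cdot dW_t+(1/\mathbf S_t)|b_{T-t}^\beta|^2dt\). Localize with \(\tau_n:=\inf\{t:|X_t|\le 1/n\}\wedge T\). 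On \([0,\tau_n]\) the drift is bounded, so Girsanov applies. Under \(d\mathbf Q:=\frac{h_0(x)}{h_{\tau_n}(X_{\tau_n})}\,d\mathbf P_x^{T,\beta}\) on \(\mathcal F_{\tau_n}\), the process \(X\) is a Brownian motion up to \(\tau_n\). Equivalently, since \(\partial_t h+\frac12\Delta h=0\) away from \(0\), the process \(h_{t\wedge\tau_n}(X_{t\wedge\tau_n})\) is a bounded \(\mathbf P_x\)-martingale, and its Doob transform reproduces the transition density~\eqref{FirstTrans} before \(\tau_n\). Either route gives, for \(A\in\mathcal F_{\sigma}\),
\[
\mathbf P_x^{T,\beta}[A\cap\{\sigma<\tau_n\}]=\frac{\mathbf E_x\big[\mathbf 1_{A\cap\{\sigma<\tau_n\}}\,h_\sigma(X_\sigma)\big]}{1+Q_T^\beta(x)}.
\]
Under Wiener measure \(\tau=\infty\) a.s. in \(\mathbb R^3\). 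Letting \(n\to\infty\) by monotone convergence therefore gives
\[
\mathbf P_x^{T,\beta}[A\cap\{\sigma<\tau\}]=\frac{\mathbf E_x[\mathbf 1_A h_\sigma(X_\sigma)]}{1+Q_T^\beta(x)}.
\]
Taking \(A=\Omega\) gives the normalization, and dividing yields (iii).

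\emph{Step 2 (parts (i) and (ii)).} Take \(\sigma\equiv T\) with the convention that \(\{T<\tau\}\) means \(\{\tau>T\}\); this requires running the localization argument up to time \(T\) inclusive. Since \(h_T\equiv 1+Q_0^\beta=1\) (because \(Q_0^\beta\equiv0\) off the origin), we get \(\mathbf P_x^{T,\beta}[A\cap\{\tau>T\}]=\mathbf P_x[A]/(1+Q_T^\beta(x))\). With \(A=\Omega\) this gives \(\mathbf P_x^{T,\beta}[\tau>T]=1/(1+Q_T^\beta(x))\), which is (i). Dividing gives (ii).

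\emph{Step 3 (part (iv)).} Take \(\sigma=s\) deterministic. Then
\[
\mathbf P_x^{T,\beta}[\tau>s]=\frac{\mathbf E_x[1+Q_{T-s}^\beta(X_s)]}{1+Q_T^\beta(x)}=\frac{1+(P_s*Q_{T-s}^\beta)(x)}{1+Q_T^\beta(x)}.
\]
Subtracting \(\mathbf P_x^{T,\beta}[\tau>T]\) and dividing by \(\mathbf P_x^{T,\beta}[\tau\le T]=Q_T^\beta/(1+Q_T^\beta)\) gives the stated survival function.

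\emph{Main obstacle.} The delicate point is the localization in Step 1. One must check that the martingale/Doob-transform identification holds under \(\mathbf P_x^{T,\beta}\) up to \(\tau_n\). This means verifying that the killed transition densities of \(\mathbf P_x^{T,\beta}\) before hitting \(B(0,1/n)\) equal the \(h\)-transform of the killed Brownian densities. I would derive this from Proposition~\ref{PropositionLocalizedSDEAwayOrigin}, by uniqueness in law for the SDE with bounded smooth drift on \([0,\tau_n]\). One must also justify \(\{\sigma<\tau_n\}\uparrow\{\sigma<\tau\}\), including the endpoint case \(\sigma=T\), using path continuity.
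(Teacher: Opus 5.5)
Your proposal is correct, but it runs in the opposite logical order from the paper.

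The paper works on the whole interval $[0,\tau)$. By Proposition~\ref{PropSubMart} the increasing part vanishes before $\tau$, so $\mathbf S_{t\wedge\tau}^{T,\beta}$ is a bounded martingale, and (i) is optional stopping plus $\mathbf S_{\tau\wedge T}^{T,\beta}=\mathbf 1_{\{\tau>T\}}$. For (ii) it takes $Z_t=(1+Q_T^\beta(x))\mathbf S_{t\wedge\tau}^{T,\beta}$ as the density process of the conditioned law and checks L\'evy's characterization by It\^o product-rule computations. It then deduces (iii) from (ii), the Markov property at $\sigma$ and a Radon--Nikodym inversion, and (iv) is the case $\sigma=s$.

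You instead prove the $h$-transform identity behind (iii) first. You use Girsanov on $[0,\tau_n]$, where $b$ and $1+Q$ are bounded by~\eqref{OnePlusQBetaBoundsOld}, \eqref{GradQBetaBoundOld} and Lemma~\ref{LemSmallTimeGaussianEstimate}. You then pass to the limit by monotone convergence, using that the origin is polar for three-dimensional Brownian motion, and read off (i), (ii), (iv) as the cases $\sigma=T$ and $\sigma=s$.

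What your route buys:
\begin{itemize}
\item It shows directly that, before $\tau$, $\mathbf P_x^{T,\beta}$ is the Doob transform of Wiener measure by the space--time harmonic function $1+Q_{T-t}^\beta$.
\item It bypasses the L\'evy-characterization computation and the Radon--Nikodym inversion.
\item It needs only Proposition~\ref{PropositionLocalizedSDEAwayOrigin}. On $[0,\tau_n]$, It\^o's formula and~\eqref{PartialForP} already make $\mathbf S_{t\wedge\tau_n}^{T,\beta}$ a bounded martingale, so the Doob--Meyer input you import through Remark~\ref{RemarkSDES} is dispensable.
\end{itemize}
The paper's route, in exchange, needs no truncation or limit at this stage and no external fact about polarity. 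Indeed $\mathbf P_x[\sigma\ge\tau]=0$ drops out of its Radon--Nikodym step.

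For the endpoint case you flag, drop the cap at $T$. Take $\tau_n:=\inf\{t\in[0,T]:|X_t|\le 1/n\}$ with $\inf\emptyset=\infty$, and apply Girsanov up to $\tau_n\wedge T$. Then $\{T<\tau_n\}=\{\min_{[0,T]}|X|>1/n\}$, and $\{\sigma<\tau_n\}\uparrow\{\sigma<\tau\}$ for every $[0,T]$-valued $\sigma$.
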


\subsection{Organization of the paper}

The remainder of the paper is organized as follows.
\begin{itemize} 
\item[--]
Section~\ref{SectionWeakSolConst} is devoted to the construction of a weak
solution to the localized stochastic differential equation
\eqref{SDEToSolve}. We first define a space--time harmonic function
\(\mathbb Q_t^\beta\) in Section~\ref{TheHarmonicFunction}, then construct the Brownian motion \(W^{T,\beta}\) in Section~\ref{BrownianMotionConstruction}, and finally prove
Proposition~\ref{PropositionLocalizedSDEAwayOrigin} in Section~\ref{ProofPropositionLocalizedSDEAwayOrigin}.

\item[--] Section~\ref{ProofSubMartCharacterization} establishes the submartingale characterization of the diffusion through the proof of
Proposition~\ref{PropSubMart}.

\item[--] Section~\ref{SubsectionThmSubMART} is devoted to the proof of Theorem~\ref{ThmSubMART}.

\item[--]
Section~\ref{ProofAuxLemmas} contains the proofs of the auxiliary results.
\end{itemize}

\section{A weak solution to the SDE~(\ref{SDEToSolve}) away from the origin}\label{SectionWeakSolConst}

In this section we construct a weak solution to SDE~\eqref{SDEToSolve} away from the origin. We use a space--time harmonic transformation \(\mathbb Q^\beta\) and an associated \(\mathbf P_x^{T,\beta}\)-martingale \(\{Y_t^{T,\beta}\}_{t\in[0,T]}\), defined by \(Y_t^{T,\beta}:=\mathbb Q_{T-t}^{\beta}(X_t)\), to obtain the three-dimensional Brownian motion \(W^{T,\beta}\) in Proposition~\ref{PropositionLocalizedSDEAwayOrigin}.

\subsection{The space-time harmonic function \texorpdfstring{$\boldsymbol{\mathbb Q_t^\beta(x)}$}{Lg}} \label{TheHarmonicFunction}

Consider the function \(\mathbb Q_t^\beta:\mathbb R^3\setminus\{0\}\to\mathbb R^3\setminus\{0\}\) defined by 
\begin{align}\label{DefUpsilon}
\mathbb Q_t^{\beta}(x)\,:=\frac{x}{1+Q_t^{\beta}(x)  }\,,
\end{align}
where $Q^{\beta}_t(x)$ is defined as in~(\ref{DefH}) for $t > 0$ and $Q^{\beta}_t(x):=0$ for $t = 0$. The proof of the following lemma is in Section~\ref{ProofLemmaL2IncrementY}.

\begin{lemma}\label{LemmaL2IncrementY}
Fix \(T,\beta>0\) and \(x\in\mathbb R^3\setminus\{0\}\). Then, for every
\(t \in [0, T]\),
\[
\int_{\mathbb R^3}
\mathlarger p_{0,t}^{T,\beta}(x,y)
\left|
\mathbb Q_{T-t}^{\beta}(y)-\mathbb Q_t^\beta(x)
\right|^2
\,dy
<\infty .
\]
\end{lemma}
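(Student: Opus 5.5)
Since $\mathbb Q_t^\beta(x)$ is a fixed vector (the statement presumably intends $\mathbb Q_T^\beta(x)$ for the starting point, but any fixed vector works), the claim reduces to showing that the second moment
\[
\int_{\mathbb R^3}\mathlarger p_{0,t}^{T,\beta}(x,y)\,\bigl|\mathbb Q_{T-t}^\beta(y)\bigr|^2\,dy<\infty,
\]
together with the fact that $\mathlarger p_{0,t}^{T,\beta}(x,\cdot)$ is a probability density (it is the transition density of $\mathbf P_x^{T,\beta}$, equivalently $\int P_t^\beta(x,y)(1+Q_{T-t}^\beta(y))\,dy=1+Q_T^\beta(x)$ by the semigroup property). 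Then $|a-c|^2\le 2|a|^2+2|c|^2$ finishes. The case $t=0$ is trivial, since the kernel is a point mass at $x$ (or one simply interprets it as such), so we fix $t\in(0,T]$.

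The key observation is the pointwise bound $|\mathbb Q_{T-t}^\beta(y)|=|y|/(1+Q_{T-t}^\beta(y))\le |y|$, since $Q\ge0$ by~\eqref{DefH}. Inserting~\eqref{FirstTrans}, we get
\[
\int \mathlarger p_{0,t}^{T,\beta}(x,y)\,|\mathbb Q_{T-t}^\beta(y)|^2\,dy
\le \frac{1}{1+Q_T^\beta(x)}\int P_t^\beta(x,y)\,\frac{|y|^2}{1+Q_{T-t}^\beta(y)}\,dy
\le \int P_t^\beta(x,y)\,|y|^2\,dy .
\]
The singular factor $1+Q_{T-t}^\beta(y)\asymp|y|^{-1}$ near the origin thus works in our favour and can simply be discarded. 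When $t=T$ we have $Q_0^\beta=0$ and the same bound holds directly.

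It remains to check that $\int P_t^\beta(x,y)|y|^2\,dy<\infty$ for fixed $x\neq0$ and $t>0$, using the three terms of~\eqref{DefPointKer3dBeta}.
\begin{itemize}
\item The free term gives the Gaussian second moment $|x|^2+6t$ in the normalization used.
\item The second term is bounded by $\frac{2t}{|x|}\int |y|\,P_t(|x|+|y|)\,dy$. In polar coordinates this becomes $\int_0^\infty r^3 e^{-(|x|+r)^2/4t}\,dr<\infty$.
\item For the third term, Fubini gives
\[
\frac{8\pi\beta t}{|x|}\int_0^\infty e^{4\pi\beta u}\int_0^\infty 4\pi r^3 P_t(u+|x|+r)\,dr\,du .
\]
Since $P_t(u+|x|+r)\le (4\pi t)^{-3/2}e^{-u^2/4t}e^{-r^2/4t}$, the Gaussian decay in $u$ beats $e^{4\pi\beta u}$, and the integral is finite.
\end{itemize}

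The only real obstacle is the possible singularity of $P_t^\beta(x,\cdot)\sim|y|^{-1}$ at $y=0$, which is locally integrable in $\mathbb R^3$ and is in any case killed by the factor $|y|^2$. The other concern is the growth $e^{4\pi\beta u}$ in the third term, which is handled by the Gaussian domination above. All bounds depend on $x$ only through $|x|^{-1}$, which is finite since $x\neq0$.
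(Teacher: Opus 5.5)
Your proof is correct, and it differs from the paper's at the one step that matters. The paper applies $|\mathbb Q_{T-t}^\beta(y)|\le|y|$ first. It therefore has to control the full second moment $\int\mathlarger p_{0,t}^{T,\beta}(x,y)|y|^2\,dy$, in which the Doob factor $1+Q_{T-t}^\beta(y)\asymp|y|^{-1}$ survives. It handles this with the kernel bound~\eqref{PtBetaBounds} together with an upper bound $1+Q_{T-t}^\beta(y)\le C(1+\sqrt T/|y|)$ taken from~\eqref{OnePlusQBetaBoundsOld}.

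You instead keep the factor $(1+Q_{T-t}^\beta(y))^{-2}$ inside $|\mathbb Q_{T-t}^\beta(y)|^2$, so one power cancels the Doob factor. You then only need $\int P_t^\beta(x,y)|y|^2\,dy<\infty$, which you verify directly from the explicit formula~\eqref{DefPointKer3dBeta}.

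Your route is self-contained and uses no upper bound on $Q$ at all. This is a small bonus, because the upper bound in~\eqref{OnePlusQBetaBoundsOld} as printed cannot hold for large $|x|$: its right-hand side tends to $0$ while $1+Q_T^\beta\ge1$. The weaker form the paper actually uses is nonetheless true.

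What the paper's route buys is the by-product $\mathbf E_x^{T,\beta}\big[|X_t|^2\big]<\infty$, which is quoted later in the proof of Theorem~\ref{ThmSubMART}(ii). Your estimate does not give this. However, the quantities appearing there are weighted by $Z_t^{T,\beta}$, and those you do control: $Z_t^{T,\beta}X_t=(1+Q_T^\beta(x))\mathbf 1_{\{t<\tau\}}\mathbb Q_{T-t}^\beta(X_t)$, and $\mathbf E_x^{T,\beta}\big[Z_t^{T,\beta}|X_t|^2\big]\le\int P_t^\beta(x,y)|y|^2\,dy$.

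A few minor slips, none of which affect the argument. Your first ``$\le$'' and the ``bounded by'' for the second kernel term are in fact equalities. Also, the free term contributes $|x|^2$, so your bounds do not depend on $x$ only through $|x|^{-1}$.
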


Given \(\beta>0\) and \(T\ge0\), the radial symmetry of
\(Q_T^\beta(x)\) implies that the Jacobian matrix
\(\nabla^\dagger\mathbb Q_T^\beta(x)\) has the form
\[
\nabla^\dagger\mathbb Q_T^\beta(x)
=
\lambda_T^\beta(x)\,P_x
+
\mu_T^\beta(x)\,(I-P_x),
\]
where \(P_x:\R^3\to\R^3\) denotes the orthogonal projection onto
\(\mathrm{span}\{x\}\), and the corresponding eigenvalues are
\begin{align}\label{SigmaForm}
\lambda_{T}^{\beta}(x)\,:=\,\frac{1+Q_{T}^{\beta}(x)\,+\,|x|\,\big|\nabla Q_{T}^{\beta}(x)\big|}{\big(1+Q_{T}^{\beta}(x)\big)^2  } \hspace{.7cm}\text{and}\hspace{.7cm} \mu_{T}^{\beta}(x)\,:=\,\frac{1}{1+Q_{T}^{\beta}(x)  }\,.
\end{align} 
Since \(\lambda_t^\beta(x)>0\) and
\(\mu_t^\beta(x)>0\), the \(3\times 3\) matrix
\(\nabla^\dagger\mathbb Q_t^\beta(x)\) is invertible for every
\(x\in\R^3\setminus\{0\}\) and
\begin{align}\label{SigmaInverse}
\bigl(\nabla^\dagger\mathbb Q_t^\beta(x)\bigr)^{-1}
=
\frac{1}{\lambda_t^\beta(x)}\,P_x
+
\frac{1}{\mu_t^\beta(x)}\,(I-P_x). 
\end{align} 
Given a twice-differentiable function \(f:[0,T]\times\mathbb R^3\to\mathbb R\)
with compact support in \(\mathbb R^3\setminus\{0\}\), the Kolmogorov
equation~\eqref{KolmogorovForJ} implies that
\begin{align}\label{KolmogorovForJBack}
\frac{\partial}{\partial h}
\int_{\mathbb R^3}
\mathlarger p_{t,t+h}^{T,\beta}(x,y)\,
f(t+h,y)\,dy\,\bigg|_{h=0}
=
\Big(
\mathcal L_x^{T-t,\beta}
+
\frac{\partial}{\partial t}
\Big)f(t,x),
\end{align}
where
\[
\mathcal L_x^{r,\beta}
:=
\frac12\Delta_x
+
\frac{\nabla_x Q_r^\beta(x)}
     {1+Q_r^\beta(x)}
\cdot\nabla_x,
\qquad r>0,\quad x\in\mathbb R^3\setminus\{0\}.
\]
Note that we apply
differential operators to vector-valued functions component-wise; for example,
if \(F:\mathbb R^3\to\mathbb R^3\) and \(F=(F_1,F_2,F_3)\), then
\[
\Delta F:=(\Delta F_1,\Delta F_2,\Delta F_3).
\]
The next lemma records the basic properties of \(\mathbb Q^\beta\); its proof is omitted.
\begin{lemma}\label{LemmaMartFUN}
Let \(T,\beta>0\). The following statements hold.

\begin{enumerate}
\item[(i)]
For \(x\in\mathbb R^3\setminus\{0\}\) and
\(0\le s<t<\infty\),
\begin{align}\label{MartFunEQ}
\int_{\R^3}
\mathlarger p_{s,t}^{T,\beta}(x,y)\,
\mathbb Q_{T-t}^{\beta}(y)\,dy
=
\mathbb Q_{T-s}^{\beta}(x).
\end{align}

\item[(ii)]
The \(\mathbb R^3\)-valued function
\((t,x)\mapsto \mathbb Q_t^\beta(x)\) satisfies
\[
\frac{\partial}{\partial t}\mathbb Q_t^\beta(x)
=
\mathcal L_x^{t,\beta}\mathbb Q_t^\beta(x),
\qquad
x\in\mathbb R^3\setminus\{0\}.
\]

\item[(iii)]
For any twice-differentiable function \(G:\mathbb R^3\to\mathbb R\),
\begin{align}\label{ChainRULE}
\Big(
\mathcal L_x^{t,\beta}
-
\frac{\partial}{\partial t}
\Big)
G\big(\mathbb Q_t^\beta(x)\big)
=
\frac12
\bigg[
\big(\lambda_t^\beta(x)\big)^2
\partial_{x\parallel}^2G(z)
+
\big(\mu_t^\beta(x)\big)^2
\sum_{j=1}^{2}\partial_{e_j}^2G(z)
\bigg]_{z=\mathbb Q_t^\beta(x)},
\end{align}
where \(\{e_1,e_2\}\) is any orthonormal basis of the plane $x^\perp := \{v\in\mathbb R^3:v\cdot x=0\}$, and
\[
\partial_{x\parallel}^2G(z)
:=
\left(\frac{x}{|x|}\cdot\nabla\right)^2G(z),
\qquad
\partial_{e_j}^2G(z)
:=
(e_j\cdot\nabla)^2G(z),
\quad j=1,2.
\]
\end{enumerate}
\end{lemma}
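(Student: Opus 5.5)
We prove the three items of Lemma~\ref{LemmaMartFUN} in the order (ii), (iii), (i). The reason is that (i) is the integrated form of (ii) and needs some extra care at the origin.

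\textbf{Item (ii).} Write \(\mathbb Q_t^\beta(x)=x\,S_t^\beta(x)\), where \(S_t^\beta=1/(1+Q_t^\beta)\) as in~\eqref{DefpFunction}. Each component is \(x_i S_t^\beta(x)\). The Leibniz rule gives
\[
\Delta(x_iS)=x_i\Delta S+2\partial_iS,
\qquad
\nabla(x_iS)=x_i\nabla S+Se_i .
\]
Therefore
\[
\mathcal L_x^{t,\beta}(x_iS)=x_i\,\mathcal L_x^{t,\beta}S+\partial_iS+S\,b_i,
\qquad b=b_t^\beta .
\]
Since \(b=\nabla\log(1+Q)=-\nabla S/S\), we have \(S\,b_i=-\partial_iS\), so the last two terms cancel. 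Hence \(\mathcal L(x_iS)=x_i\,\mathcal L S\). It remains to check \(\partial_t S=\mathcal L S\), which is exactly~\eqref{PartialForP}. That identity follows from \(\partial_tQ=\tfrac12\Delta Q\) on \(\mathbb R^3\setminus\{0\}\), which in turn is differentiation under the integral in~\eqref{DefH}, using that \(P_t(|x|+w)/|x|\) is harmonic in space-time away from \(0\).

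\textbf{Item (iii).} We use the chain rule. For \(z=\mathbb Q_t^\beta(x)\),
\[
\Big(\mathcal L-\partial_t\Big)G(\mathbb Q)
=\nabla G(z)\cdot\Big(\mathcal L-\partial_t\Big)\mathbb Q
+\tfrac12\sum_{k}\partial_k\mathbb Q^{\,\dagger}\,\mathrm{Hess}\,G(z)\,\partial_k\mathbb Q .
\]
The first-order term vanishes by (ii). The second-order term equals \(\tfrac12\Tr\big(J^\dagger\,\mathrm{Hess}\,G\,J\big)\) with \(J=\nabla^\dagger\mathbb Q_t^\beta(x)\). Inserting the spectral form \(J=\lambda P_x+\mu(I-P_x)\), with \(\lambda,\mu\) from~\eqref{SigmaForm}, and evaluating the trace in the orthonormal basis \(\{x/|x|,e_1,e_2\}\) gives~\eqref{ChainRULE}. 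The formula for \(\lambda\) comes from \(\partial_r(r/(1+Q))\), using that \(\nabla Q\) points radially with \(\partial_rQ=-|\nabla Q|\).

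\textbf{Item (i), the main obstacle.} Substituting~\eqref{FirstTrans}, the claim becomes
\[
\int P_{t-s}^\beta(x,y)\,y\,dy = x \qquad\text{for } r=t-s .
\]
The factors \(1+Q\) cancel: \(\mathbb Q_{T-t}(y)(1+Q_{T-t}(y))=y\) and \(\mathbb Q_{T-s}(x)(1+Q_{T-s}(x))=x\). For the free heat kernel this identity holds by symmetry. The two correction terms in~\eqref{DefPointKer3dBeta} depend on \(y\) only through \(|y|\), multiplied by \(1/|y|\). Against the factor \(y\) they are radial in \(y\) and odd, so their integrals vanish. Absolute integrability must be checked: \(|y|\cdot|y|^{-1}\) is bounded near \(0\), and there is Gaussian decay at infinity since \(\int_0^\infty e^{4\pi\beta u}P_r(u+\cdot)\,du<\infty\). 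The edge case \(T-t=0\), where \(Q_0:=0\), is covered the same way. This symmetry argument avoids any need to justify Green's identities for \(\mathcal L\) near the singular point. The delicate point to verify is precisely this integrability, and possibly the exchange of integrals at \(u\to\infty\), which is harmless because the Gaussian factor dominates \(e^{4\pi\beta u}\).
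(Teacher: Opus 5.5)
The paper omits the proof of this lemma, so there is no argument to compare against. Your proposal is correct and supplies one.

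Items (ii) and (iii) are the natural direct computations. In (ii), the cancellation $\partial_i S_t^\beta+S_t^\beta\,(b_t^\beta)_i=0$ reduces the claim to~\eqref{PartialForP}. In (iii), the first-order term vanishes by (ii), and the symmetric Jacobian $\lambda_t^\beta P_x+\mu_t^\beta(I-P_x)$ turns $\tfrac12\Tr(J^\dagger\,\mathrm{Hess}\,G\,J)$ into the right-hand side of~\eqref{ChainRULE}.

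One small correction in (ii): the function that is caloric away from the origin is $t\,P_t(|x|+w)/|x|$, not $P_t(|x|+w)/|x|$. The prefactor $t$ in~\eqref{DefH} is what makes $t\,P_t(\rho)\propto t^{-1/2}e^{-\rho^2/(4t)}$ a one-dimensional heat kernel in $\rho=|x|+w$. Then $h(|x|)/|x|$ is caloric on $\mathbb R^3\setminus\{0\}$ whenever $h$ is caloric on the line. Relatedly, the factor $\tfrac12$ in $\partial_tQ_t^\beta=\tfrac12\Delta Q_t^\beta$ presupposes $P_t(r)=(2\pi t)^{-3/2}e^{-r^2/(2t)}$. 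With $P_t$ as literally displayed in the introduction, one gets $\partial_tQ_t^\beta=\Delta Q_t^\beta$. That is a convention slip in the paper, not in your argument.

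Your treatment of (i) is the substantive part. Your choice to avoid deriving it from (ii) is necessary, not merely convenient, because space-time harmonicity away from the origin cannot by itself give~\eqref{MartFunEQ}. By~\eqref{PartialForP}, $S_t^\beta$ satisfies the same equation $\partial_tS_t^\beta=\mathcal L_x^{t,\beta}S_t^\beta$ on $\mathbb R^3\setminus\{0\}$. Yet~\eqref{Check} shows $\int\mathlarger{p}_{s,t}^{T,\beta}(x,y)\,S_{T-t}^\beta(y)\,dy=S_{T-s}^\beta(x)\big(1+Q_{t-s}^\beta(x)\big)>S_{T-s}^\beta(x)$. So any Kolmogorov-equation argument based on~\eqref{KolmogorovForJBack} would need boundary information at the origin.

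Your reduction to $\int P_r^\beta(x,y)\,y\,dy=x$ supplies exactly that information. The singular part of $P_r^\beta(x,\cdot)$ in~\eqref{DefPointKer3dBeta} is radial in $y$. It therefore adds mass (the $Q_r^\beta(x)$ responsible for the submartingale property of $S$) but no first moment. Your integrability check suffices for Fubini and the oddness argument, for instance via $(u+|x|+|y|)^2\ge u^2+|y|^2$ to dominate $e^{4\pi\beta u}$.

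Finally, the range in (i) should be read as $0\le s<t\le T$, since $\mathbb Q_{T-t}^\beta$ is undefined for $t>T$. Your remark on $t=T$ covers the endpoint.
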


\subsection{Construction of the Brownian motion \texorpdfstring{$\boldsymbol{W^{T,\beta}}$}{Lg}}\label{BrownianMotionConstruction}

For \(x\in\mathbb R^3\setminus\{0\}\), recall that the augmented
\(\sigma\)-algebra \(\mathcal B_x^{T}\) and the augmented filtration
\(\{\mathcal F_t^{T,x}\}_{t \in [0, T]}\) are defined as in~\eqref{Augmented}.
The next lemma and its corollary concern the construction of the
three-dimensional Brownian motion \(W^{T,\beta}\) in
Proposition~\ref{PropositionLocalizedSDEAwayOrigin}. Recall that \(\mathbb Q_t^\beta\) is defined by~\eqref{DefUpsilon} on
\((0,T]\times(\mathbb R^3\setminus\{0\})\), and since
\(\lim_{x\to0}\mathbb Q_t^\beta(x)=0\) for every \(t>0\), it extends
continuously to \((0,T]\times\mathbb R^3\) by setting
\(\mathbb Q_t^\beta(0):=0\) for \(t>0\). In what follows,
\(\mathbb Q_t^\beta\) always denotes this continuous extension.

\begin{lemma}\label{LemmaMartI}
Fix \(T,\beta>0\) and \(x\in\mathbb R^3\setminus\{0\}\). Define the \(\mathbb R^3\)-valued process
\[
Y_t^{T,\beta}
=
\mathbb Q_{T-t}^{\beta}(X_t),
\qquad 0\le t\le T.
\]

\begin{enumerate}
\item[(i)]
The process \(Y^{T,\beta}\) is a continuous square-integrable
\(\mathbf P_x^{T,\beta}\)-martingale with respect to the augmented filtration
\(\{\mathcal F_t^{T,x}\}_{t\in[0,T]}\).

\item[(ii)]
For every \(u\in\mathbb R^3\), the quadratic variation of the real-valued
martingale \(\{u\cdot Y_t^{T,\beta}\}_{t\in[0,T]}\) is given by
\[
\big\langle u\cdot Y^{T,\beta}\big\rangle_t
=
\int_0^t
\big\|\nabla^\dagger\mathbb Q_{T-s}^{\beta}(X_s)u\big\|_2^2\,ds,
\qquad 0\le t\le T.
\]
\end{enumerate}
\end{lemma}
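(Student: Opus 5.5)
For part (i), I will use the Markov property together with Lemma~\ref{LemmaMartFUN}(i). For \(0\le s<t\le T\) and \(A\in\mathcal F_s^X\), the Markov property of \(X\) under \(\mathbf P_x^{T,\beta}\) with transition density \(\mathlarger p^{\,T,\beta}\) gives
\[
\mathbf E_x^{T,\beta}\big[\mathbf 1_A Y_t^{T,\beta}\big]
=
\mathbf E_x^{T,\beta}\Big[\mathbf 1_A\int_{\mathbb R^3}\mathlarger p_{s,t}^{T,\beta}(X_s,y)\,\mathbb Q_{T-t}^\beta(y)\,dy\Big]
=
\mathbf E_x^{T,\beta}\big[\mathbf 1_A Y_s^{T,\beta}\big].
\]
Here I use that \(X_s\neq0\) a.s., since the one-dimensional marginals have a density. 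So \(Y^{T,\beta}\) is an \(\mathcal F^X\)-martingale. Passing to the augmented filtration \(\mathcal F^{T,x}\) only adds null sets, so the martingale property is preserved.

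Continuity of the paths follows because \(X\) is continuous and \((t,y)\mapsto\mathbb Q_t^\beta(y)\) is continuous on \((0,T]\times\mathbb R^3\) after the extension by \(0\) at the origin. At the terminal time, \(\mathbb Q_0^\beta(y)=y\) since \(Q_0^\beta:=0\). I still need to check joint continuity near \(t=0\), \(y=0\); the bound \(|\mathbb Q_t^\beta(y)|\le|y|\) handles this. Square integrability follows directly from Lemma~\ref{LemmaL2IncrementY}, because \(\mathbb Q_T^\beta(x)\) is a constant: \(\mathbf E_x^{T,\beta}|Y_t^{T,\beta}|^2<\infty\). In fact \(|Y_t|\le|X_t|\), and the second moment of \(X_t\) can be read off from the explicit kernel.

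For part (ii), I will identify \(\langle u\cdot Y\rangle\) by showing that
\[
N_t:=(u\cdot Y_t)^2-\int_0^t\|\nabla^\dagger\mathbb Q_{T-s}^\beta(X_s)u\|^2\,ds
\]
is a martingale. Formally, this follows from Lemma~\ref{LemmaMartFUN}(iii) applied with \(G(z)=(u\cdot z)^2\). The right-hand side of \eqref{ChainRULE} then equals \(\lambda^2(u\cdot\hat x)^2+\mu^2|P^\perp u|^2=\|\nabla^\dagger\mathbb Q u\|^2\), by the spectral form of the Jacobian. Combined with \eqref{KolmogorovForJBack}, the Dynkin formula gives the martingale property of \(N\) for functions supported away from the origin.

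To make this rigorous, I will localize. Let \(\tau_\epsilon^{(k)}\) be successive stopping times: the process exits \(\{|y|>\epsilon\}\), and then after each return to \(\{|y|\ge2\epsilon\}\) it exits again. On each stochastic interval where \(|X|\ge\epsilon\), I will apply Dynkin's formula to \(G(\mathbb Q_{T-t}^\beta(X_t))\) times a smooth cutoff equal to one on \(\{|y|\ge\epsilon\}\). This yields the identity \(d\langle u\cdot Y\rangle_t=\|\nabla^\dagger\mathbb Q u\|^2dt\) on \(\{t:X_t\neq0\}\).

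It remains to show that \(\langle u\cdot Y\rangle\) does not charge the zero set \(Z=\{t:X_t=0\}\). I expect this to be the main obstacle. My plan here is as follows. The set \(Z\) has Lebesgue measure zero a.s., by Fubini, since the marginal laws have densities. For the quadratic variation itself, I will use the estimate
\[
\mathbf E\big[\langle u\cdot Y\rangle_{\text{time spent in }|X|<\epsilon}\big]\le \mathbf E\int_0^T \mathbf 1_{|X_s|<\epsilon}\,\|\nabla^\dagger\mathbb Q u\|^2\,ds + o(1),
\]
which I will obtain via the \(L^2\) approximation \((u\cdot Y_t)^2\) and the martingale property of \(Y\). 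Near the origin, \(\lambda,\mu=O(|y|)\), because \(1+Q\asymp1/|y|\) and \(|\nabla Q|\asymp|y|^{-2}\). So the integrand is bounded, and the right-hand side tends to \(0\) as \(\epsilon\to0\). To establish the estimate, I would compute \(\mathbf E[(u\cdot Y_t)^2]-(u\cdot Y_0)^2\) exactly from the kernel and Lemma~\ref{LemmaMartFUN}, and compare it with \(\mathbf E\int_0^t\|\nabla^\dagger\mathbb Q u\|^2ds\). If the two agree, then the increasing process \(\langle u\cdot Y\rangle-\int\|\cdot\|^2ds\), which is supported on \(Z\), has zero expectation and hence vanishes.
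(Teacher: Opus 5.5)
Your part (i) follows the paper's argument: the Markov property plus \eqref{MartFunEQ}, augmentation by null sets, and square integrability from Lemma~\ref{LemmaL2IncrementY}. Your attention to continuity at the terminal time, where $\mathbb Q_0^\beta(y)=y$, is a fair refinement, since the paper only invokes continuity of $\mathbb Q^\beta$ on $(0,T]\times\mathbb R^3$. At points $(0,y_0)$ with $y_0\neq0$ you also need $Q_r^\beta\to0$ locally uniformly as $r\downarrow0$, and Lemma~\ref{LemSmallTimeGaussianEstimate}(i) gives this. For part (ii) your core computation is also the paper's. The paper evaluates $\frac{d}{dt}\langle u\cdot Y^{T,\beta}\rangle_t$ as an infinitesimal conditional variance using \eqref{KolmogorovForJBack} and \eqref{ChainRULE}, which is exactly your generator computation with $G(z)=(u\cdot z)^2$. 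Your Dynkin/localization formulation on excursion intervals is the rigorous version of that calculation.

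You go beyond the paper by asking whether $\langle u\cdot Y^{T,\beta}\rangle$ charges $Z=\{t:X_t=0\}$. The paper's fixed-$t$ limit tacitly assumes absolute continuity. The worry is real in this model: the process $\mathbf A^{T,\beta}$ of Proposition~\ref{PropSubMart} is exactly a singular increasing part living on $Z$. However, your resolution is left conditional (``if the two agree''). Lemma~\ref{LemmaMartFUN} only gives the first-moment identity \eqref{MartFunEQ}, not the second-moment comparison you need, so as written this step is unproved.

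The step closes in one line. Set $M:=u\cdot Y^{T,\beta}$, a continuous martingale by (i). The occupation times formula gives $\int_0^t\mathbf 1_{\{M_s=0\}}\,d\langle M\rangle_s=\int_{\mathbb R}\mathbf 1_{\{0\}}(a)\,L_t^a\,da=0$. Since $\mathbb Q_r^\beta(0)=0$ for every $r\ge0$, we have $Z\subseteq\{s:M_s=0\}$, so $d\langle M\rangle$ does not charge $Z$.

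Your moment comparison can also be made to work. Differentiate $\int\mathlarger p_{0,t}^{T,\beta}(x,y)\,(u\cdot\mathbb Q_{T-t}^\beta(y))^2\,dy$ using \eqref{KolmogorovForJ} and integrate by parts. The flux through $\{|y|=r\}$ is $O(r^3)$, because $\mathlarger p_{0,t}^{T,\beta}(x,y)=O(|y|^{-2})$, $|b_{T-t}^\beta(y)|=O(|y|^{-1})$ and $(u\cdot\mathbb Q_{T-t}^\beta(y))^2=O(|y|^4)$ near the origin. That is real work you have not carried out. With either fix your proof is complete, and more careful than the paper's.
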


\begin{proof} \noindent Part (i). Since \(X\) is continuous under \(\mathbf P_x^{T,\beta}\) and
\((r,z)\mapsto\mathbb Q_r^\beta(z)\) is continuous on
\((0,T]\times\mathbb R^3\), it follows that
\(Y^{T,\beta}\) is continuous \(\mathbf P_x^{T,\beta}\)-almost surely.

Given \(t \in [0, T]\), using Minkowski's inequality we obtain the first inequality below.
\begin{align*}
\mathbf E_x^{T,\beta}
\Big[
|Y_t^{T,\beta}|^2
\Big]^{1/2}
\le\,&
\mathbf E_x^{T,\beta}
\Big[
|Y_0^{T,\beta}|^2
\Big]^{1/2}
+
\mathbf E_x^{T,\beta}
\Big[
|Y_t^{T,\beta}-Y_0^{T,\beta}|^2
\Big]^{1/2}
\\
=\,&
\big|\mathbb Q_T^\beta(x)\big|
+
\left(
\int_{\R^3}
\big|
\mathbb Q_{T-t}^\beta(y)
-
\mathbb Q_T^\beta(x)
\big|^2
\mathlarger p_{0,t}^{T,\beta}(x,y)
\,dy
\right)^{1/2}
< \infty
\end{align*}
The final inequality follows from the bound $|\mathbb Q_T^\beta(x)|\le |x|$ and Lemma~\ref{LemmaL2IncrementY}. Thus \(Y^{T,\beta}\) is square-integrable.

For fixed \(0\le s<t\le T\), by the Markov property of \(\mathbf P_x^{T,\beta}\) with transition density \(\mathlarger p_{s,t}^{T,\beta}\), we have the first equality below.
\[
\mathbf E_x^{T,\beta}
\left[
Y_t^{T,\beta}
\,\middle|\,
\mathcal F_s^X
\right]
=
\int_{\R^3}
\mathlarger p_{s,t}^{T,\beta}(X_s,y)
\mathbb Q_{T-t}^{\beta}(y)\,dy
=
\mathbb Q_{T-s}^{\beta}(X_s)
=
Y_s^{T,\beta}
\]
The second equality uses~\eqref{MartFunEQ}. Thus \(Y^{T,\beta}\) is a martingale with respect to \(\{\mathcal F_t^X\}_{t \in [0,T]}\). Since \(\mathcal F_t^{T,x}\) is the augmentation of
\(\mathcal F_t^X\) by \(\mathbf P_x^{T,\beta}\)-null sets, as defined in~\eqref{Augmented}, the martingale property extends to
\(\{\mathcal F_t^{T,x}\}_{t\in[0,T]}\). \vspace{.2cm}

\noindent Part (ii). For \(u\in\R^3\) and \(0\le t<T\), the density of the quadratic variation of the continuous martingale \(u\cdot Y^{T,\beta}\) is, \(\mathbf P_x^{T,\beta}\)-almost surely, given by
\begin{align}
\frac{d}{dt}
\big\langle u\cdot Y^{T,\beta}\big\rangle_t
&=
\lim_{h\downarrow0}
\frac1h\,
\mathbf E_x^{T,\beta}
\left[
\left(
u\cdot Y_{t+h}^{T,\beta}
-
u\cdot Y_t^{T,\beta}
\right)^2
\,\middle|\,
\mathcal F_t^{T,x}
\right] \nonumber \\
&=
\lim_{h\downarrow0}
\frac1h\,
\mathbf E_{X_t}^{T-t,\beta}
\left[
\left(
u\cdot Y_h^{T-t,\beta}
-
u\cdot Y_0^{T-t,\beta}
\right)^2
\right] \nonumber \\
&=
\lim_{h\downarrow0}
\frac1h
\int_{\R^3}
\left(
u\cdot\mathbb Q_{T-t-h}^{\beta}(y)
-
u\cdot\mathbb Q_{T-t}^{\beta}(X_t)
\right)^2
\mathlarger p_{0,h}^{T-t,\beta}(X_t,y)\,dy , \nonumber
\end{align}
where the second equality follows from the Markov property of
\(\mathbf P_x^{T,\beta}\), applied at time \(t\), and the final equality uses that, under \(\mathbf P_{X_t}^{T-t,\beta}\), the random variable \(X_h\) has density \(\mathlarger p_{0,h}^{T-t,\beta}(X_t,\cdot)\). Equivalently, writing the right-hand side above in derivative notation, we obtain the first equality below.
\begin{align}  \label{EqBracketDensity}
\frac{d}{dt}\big\langle u\cdot Y^{T,\beta}\big\rangle_t
=\,&
\frac{\partial}{\partial h}
\int_{\R^3}
\left(
u\cdot\mathbb Q_{T-t-h}^{\beta}(y)
-
u\cdot\mathbb Q_{T-t}^{\beta}(X_t)
\right)^2
\mathlarger p_{0,h}^{T-t,\beta}(X_t,y)\,dy
\bigg|_{h=0} \nonumber \\
=\,&
\bigg(
\mathcal L_y^{T-t,\beta}
+
\frac{\partial}{\partial s}
\bigg)
\Big(
u\cdot \mathbb Q_{T-s}^{\beta}(y)
-
u\cdot \mathbb Q_{T-t}^{\beta}(X_t)
\Big)^2
\bigg|_{(s,y)=(t,X_t)} \nonumber \\
=\,&
\big(\lambda_{T-t}^{\beta}(X_t)\big)^2
\left(u\cdot \frac{X_t}{|X_t|}\right)^2
+
\big(\mu_{T-t}^{\beta}(X_t)\big)^2
\sum_{j=1}^{2}(u\cdot e_j)^2  
\end{align}
The second equality applies~\eqref{KolmogorovForJBack}, and the final equality follows from the three-dimensional chain-rule identity~\eqref{ChainRULE}, where \(\{e_1,e_2\}\) is an orthonormal basis of the plane $X_t^\perp = \{v\in\mathbb R^3:v\cdot X_t=0\}$. Finally, using the orthogonal decomposition $u
=
P_{X_t}u
+
(I-P_{X_t})u$ together with
\[
\|P_{X_t}u\|_2^2
=
\left(u\cdot \frac{X_t}{|X_t|}\right)^2
\quad \textup{and} \quad
\|(I-P_{X_t})u\|_2^2
=
\sum_{j=1}^2 (u\cdot e_j)^2 ,
\]
the right-hand side of~\eqref{EqBracketDensity} may be written as in the first equality below.
\begin{align}
\frac{d}{dt}\big\langle u\cdot Y^{T,\beta}\big\rangle_t
=\,
\big(\lambda_{T-t}^{\beta}(X_t)\big)^2
\left\|P_{X_t}u\right\|_2^2
+
\big(\mu_{T-t}^{\beta}(X_t)\big)^2
\left\|(I-P_{X_t})u\right\|_2^2
=
\left\|
\nabla^\dagger\mathbb Q_{T-t}^{\beta}(X_t)u
\right\|_2^2 \nonumber
\end{align}
Integrating the above identity over \([0,t]\) yields the desired formula.
\end{proof}

\begin{remark}\label{RemarkOriginNull}
Since the process \(X\) under \(\mathbf P_x^{T,\beta}\) admits
transition densities \(\mathlarger p_{s,t}^{T,\beta}(x,y)\), the random set
\[
\{t\in[0,T]:X_t=0\}
\]
has Lebesgue measure zero, \(\mathbf P_x^{T,\beta}\)-almost surely. Indeed,
\begin{align*}
\mathbf E_x^{T,\beta}
\Big[
\operatorname{meas}
\big(
\{s\in[0,T]:X_s=0\}
\big)
\Big]
&=
\mathbf E_x^{T,\beta}
\bigg[
\int_0^T
\mathbf 1_{\{X_s=0\}}
\,ds
\bigg]  \nonumber \\
&=
\int_0^T
\int_{\mathbb R^3}
\mathlarger p_{0,s}^{T,\beta}(x,y)
\mathbf 1_{\{y=0\}}
\,dy\,ds 
=
0,
\end{align*}
where \(\operatorname{meas}(E)\) denotes the Lebesgue measure of a measurable set \(E\subset[0,T]\). Consequently,
\[
\int_0^t
\mathbf 1_{\{X_s\neq0\}}
\,ds
=
t,
\qquad
0\le t\le T,
\]
\(\mathbf P_x^{T,\beta}\)-almost surely. In particular, any indicator
\(\mathbf 1_{\{X_s\neq0\}}\) appearing inside a time integral may be omitted
without affecting the value of the integral.
\end{remark}

Recall that the \(3\times 3\) matrix \(\nabla^\dagger\mathbb Q_T^\beta(x)\) is invertible for \(x\neq0\), with inverse given by~\eqref{SigmaInverse}.

\begin{corollary}\label{CorollaryMartI}
Fix \(T,\beta>0\) and \(x\in\mathbb R^3\setminus\{0\}\). Let the
\(\mathbb R^3\)-valued process
\(\{Y_t^{T,\beta}\}_{t \in [0, T]}\) be defined as in
Lemma~\ref{LemmaMartI}. The process
\begin{align}\label{DefSBMW}
W_t^{T,\beta}
:=
\int_0^t
\mathbf 1_{\{X_s\neq0\}}
\big(\nabla^\dagger\mathbb Q_{T-s}^{\beta}(X_s)\big)^{-1}
\,dY_s^{T,\beta},
\qquad t \in [0, T],
\end{align}
is a three-dimensional standard Brownian motion with respect to
\(\{\mathcal F_t^{T,x}\}_{t \in [0, T]}\) under \(\mathbf P_x^{T,\beta}\).
Moreover,
\[
dY_t^{T,\beta}
=
\nabla^\dagger\mathbb Q_{T-t}^{\beta}(X_t)\,dW_t^{T,\beta}.
\]
\end{corollary}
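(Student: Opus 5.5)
The plan is to apply Lévy's characterization of Brownian motion to the continuous local martingale $W^{T,\beta}$ defined in \eqref{DefSBMW}. The first step is to check that the stochastic integral makes sense. By Lemma~\ref{LemmaMartI}(ii) and polarization, the matrix-valued bracket of $Y^{T,\beta}$ is
\[
d\langle Y^{T,\beta,i},Y^{T,\beta,j}\rangle_s=\big(\Sigma_s\Sigma_s^{\dagger}\big)_{ij}\,ds,
\qquad \Sigma_s:=\nabla^\dagger\mathbb Q_{T-s}^{\beta}(X_s).
\]
Here we use that $\Sigma_s$ is symmetric, since it equals $\lambda P_x+\mu(I-P_x)$. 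The integrand is $H_s:=\mathbf 1_{\{X_s\neq0\}}\Sigma_s^{-1}$, which is predictable because it is a measurable function of $(s,X_s)$ and $X$ is continuous. Its square-integrability against the bracket amounts to the finiteness of
\[
\int_0^t \operatorname{Tr}\big(H_s\Sigma_s\Sigma_s^\dagger H_s^\dagger\big)\,ds=\int_0^t 3\,\mathbf 1_{\{X_s\neq0\}}\,ds\le 3t.
\]
This holds pathwise, so $W^{T,\beta}$ is a well-defined continuous $\mathbb R^3$-valued local martingale with respect to $\{\mathcal F_t^{T,x}\}$. Because the bracket is bounded, it is in fact a square-integrable martingale.

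The second step computes the brackets. For $u\in\mathbb R^3$,
\[
\langle u\cdot W^{T,\beta}\rangle_t=\int_0^t \mathbf 1_{\{X_s\neq0\}}\,\big\|\Sigma_s^{\dagger}(\Sigma_s^{-1})^{\dagger}u\big\|^2 ds=\int_0^t \mathbf 1_{\{X_s\neq0\}}|u|^2\,ds=|u|^2 t.
\]
The last equality is Remark~\ref{RemarkOriginNull}. Polarizing gives $\langle W^{i},W^{j}\rangle_t=\delta_{ij}t$, and $W_0^{T,\beta}=0$. Lévy's characterization then shows that $W^{T,\beta}$ is a standard three-dimensional $\{\mathcal F_t^{T,x}\}$-Brownian motion under $\mathbf P_x^{T,\beta}$.

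The third step is the representation $dY^{T,\beta}=\Sigma_t\,dW^{T,\beta}_t$. By associativity of stochastic integrals,
\[
\int_0^t\Sigma_s\,dW_s^{T,\beta}=\int_0^t\mathbf 1_{\{X_s\neq0\}}\,dY_s^{T,\beta}.
\]
This is legitimate because $\Sigma$ is integrable against $W$: its entries are bounded by $\lambda$ and $\mu$. The quantity $\mu\le1$ is bounded, while for $\lambda$ we have $\int_0^t\|\Sigma_s\|^2ds=\langle Y\rangle$-type integrals, which are finite since $Y$ is square-integrable by Lemma~\ref{LemmaMartI}. It then remains to show that $\int_0^t\mathbf 1_{\{X_s=0\}}\,dY_s^{T,\beta}=0$. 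Its bracket is
\[
\int_0^t\mathbf 1_{\{X_s=0\}}\,d\langle Y\rangle_s,
\]
which vanishes because $\langle Y\rangle$ is absolutely continuous in $s$ and $\{s:X_s=0\}$ is Lebesgue-null by Remark~\ref{RemarkOriginNull}. A continuous martingale with zero bracket is identically zero.

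The main obstacle I expect is justifying that the bracket density in Lemma~\ref{LemmaMartI}(ii) really makes $\langle Y\rangle$ absolutely continuous, with density $\Sigma_s\Sigma_s^\dagger$ on all of $[0,T]$, including times when $X_s=0$ where $\Sigma_s$ is undefined. I will rely on the integrated formula stated in Lemma~\ref{LemmaMartI}(ii). That formula is an identity of $ds$-integrals, so the undefined values on the null set $\{s:X_s=0\}$ are irrelevant. The indicator in \eqref{DefSBMW} handles the remaining definitional issue.
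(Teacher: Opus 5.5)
Your proposal is correct and follows the paper's route. You compute $d\langle W^{T,\beta}\rangle_s=\mathbf 1_{\{X_s\neq0\}}I_3\,ds$ from Lemma~\ref{LemmaMartI}(ii), remove the indicator via Remark~\ref{RemarkOriginNull}, apply L\'evy's characterization, and then recover $dY^{T,\beta}_t=\nabla^\dagger\mathbb Q_{T-t}^{\beta}(X_t)\,dW_t^{T,\beta}$ off the Lebesgue-null zero set. Your third step (associativity of stochastic integrals plus $\int_0^t\mathbf 1_{\{X_s=0\}}\,dY_s^{T,\beta}=0$ because its bracket vanishes) is a more careful version of the paper's ``multiply both sides by $\nabla^\dagger\mathbb Q_{T-t}^{\beta}(X_t)$'' step.
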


\begin{proof}
By Lemma~\ref{LemmaMartI}(ii), the matrix-valued quadratic variation of
\(Y^{T,\beta}\) is
\begin{align} \label{QuadVarY}
d\langle Y^{T,\beta}\rangle_s
=
\nabla^\dagger\mathbb Q_{T-s}^{\beta}(X_s)
\big(\nabla^\dagger\mathbb Q_{T-s}^{\beta}(X_s)\big)^{\dagger}\,ds .
\end{align}
Since \(\nabla^\dagger\mathbb Q_{T-s}^{\beta}(X_s)\) is invertible on
\(\{X_s\neq0\}\), the stochastic integral defining
\(W^{T,\beta}\) is well defined and hence \(W^{T,\beta}\) is a
continuous local martingale with respect to
\(\{\mathcal F_t^{T,x}\}_{t\in[0,T]}\).

Next, to compute the quadratic variation of \(W^{T,\beta}\), we use~\eqref{DefSBMW} to obtain the first equality below.
\begin{align*}
d\langle W^{T,\beta}\rangle_s
&=
\mathbf 1_{\{X_s\neq0\}}
\big(\nabla^\dagger\mathbb Q_{T-s}^{\beta}(X_s)\big)^{-1}
\,d\langle Y^{T,\beta}\rangle_s\,
\big(\big(\nabla^\dagger\mathbb Q_{T-s}^{\beta}(X_s)\big)^{-1}\big)^\dagger  \\
&=
\mathbf 1_{\{X_s\neq0\}}
\big(\nabla^\dagger\mathbb Q_{T-s}^{\beta}(X_s)\big)^{-1}
\nabla^\dagger\mathbb Q_{T-s}^{\beta}(X_s)
\big(\nabla^\dagger\mathbb Q_{T-s}^{\beta}(X_s)\big)^\dagger
\big(\big(\nabla^\dagger\mathbb Q_{T-s}^{\beta}(X_s)\big)^{-1}\big)^\dagger \,ds  \\
&=
\mathbf 1_{\{X_s\neq0\}} I_3\,ds 
\end{align*}
The second equality uses~\eqref{QuadVarY}. Therefore, by Remark~\ref{RemarkOriginNull}, for every \(t\in[0,T]\),
\[
\langle W^{T,\beta}\rangle_t
=
I_3\int_0^t \mathbf 1_{\{X_s\neq0\}}\,ds
=
tI_3,
\qquad
\mathbf P_x^{T,\beta}\text{-a.s.}
\]
Since \(W^{T,\beta}\) is a continuous local martingale starting from \(0\) with quadratic variation \(\langle W^{T,\beta}\rangle_t=tI_3\), L\'evy's characterization theorem implies that \(W^{T,\beta}\) is a three-dimensional standard Brownian motion with respect to \(\{\mathcal F_t^{T,x}\}_{t\in[0,T]}\) under \(\mathbf P_x^{T,\beta}\).

Finally, from the definition~\eqref{DefSBMW} of \(W^{T,\beta}\), we have
\[
dW_t^{T,\beta}
=
\mathbf 1_{\{X_t\neq0\}}
\big(\nabla^\dagger\mathbb Q_{T-t}^{\beta}(X_t)\big)^{-1}
\,dY_t^{T,\beta}.
\]
Multiplying both sides by \(\nabla^\dagger\mathbb Q_{T-t}^{\beta}(X_t)\) and using
Remark~\ref{RemarkOriginNull}, we obtain the desired identity.
\end{proof}

\subsection{Proof of Proposition~\ref{PropositionLocalizedSDEAwayOrigin}}
\label{ProofPropositionLocalizedSDEAwayOrigin}

Before proceeding, we record the derivative conventions used throughout this section. Let \(\Phi:\mathbb R^3\to\mathbb R^3\) be a twice continuously differentiable map.
\begin{enumerate}[--]
\item
The Jacobian matrix of \(\Phi\) at \(x\in\mathbb R^3\) is denoted by
\(\nabla^\dagger\Phi(x)\).

\item
The second derivative \(\Hess\Phi(x)\) is viewed as an element of
\(\mathbb R^3\otimes M^{3\times3}\) such that
\[
(u^\dagger\otimes I_3)\,\Hess\Phi(x)
\]
coincides with the Hessian matrix of the scalar-valued function
\(u^\dagger\Phi=u\cdot\Phi\) for every \(u\in\mathbb R^3\).

\item
If \(\Psi\) denotes the inverse of \(\Phi\) and
\(\nabla^\dagger\Phi(x)\) is invertible, then differentiating the
identity $\Psi(\Phi(x))=x$ twice and applying the chain rule yields
\begin{align}\label{Pip}
(\Hess\Psi)\bigl(\Phi(x)\bigr)
&=
-
\Big(
\nabla^\dagger\Phi(x)
\otimes
\nabla\Phi(x)
\Big)^{-1}
(\Hess\Phi)(x)
\Big(
I_3\otimes
\nabla^\dagger\Phi(x)
\Big)^{-1},
\end{align}
where \(\nabla\Phi(x)\) denotes the transpose of
\(\nabla^\dagger\Phi(x)\), and we use the identity $(A\otimes B)^{-1} = A^{-1}\otimes B^{-1}$ for invertible matrices \(A,B\in M^{3\times3}\).
\end{enumerate}

Recall that, for \(t\ge0\), the function
\(\mathbb Q_t^\beta:\mathbb R^3\setminus\{0\}\to\mathbb R^3\setminus\{0\}\)
is defined in~\eqref{DefUpsilon}. Since
\(\nabla^\dagger\mathbb Q_t^\beta(x)\)
is invertible for every \(x\neq0\), the map \(\mathbb Q_t^\beta\) is a
\(C^2\)-diffeomorphism of \(\mathbb R^3\setminus\{0\}\). We denote its inverse by
\[
\overleftarrow{\mathbb Q}_t^\beta:\mathbb R^3\setminus\{0\}\to\mathbb R^3\setminus\{0\}.
\]
The next lemma records two identities for the functions \(\mathbb Q_t^\beta\) and \(\overleftarrow{\mathbb Q}_t^\beta\) that will be used in the proof of Proposition~\ref{PropositionLocalizedSDEAwayOrigin}.

\begin{lemma}\label{LemmaInversePhiIdentities}
For every \(t>0\) and \(x\in\mathbb R^3\setminus\{0\}\),
\begin{align}
&\big(\nabla^\dagger \overleftarrow{\mathbb Q}_t^\beta\big)\big(\mathbb Q_t^\beta(x)\big)
\,\nabla^\dagger\mathbb Q_t^\beta(x)
=
I_3 , \label{PDEUp1}
\\
&
\Big(\frac{\partial}{\partial t}\overleftarrow{\mathbb Q}_t^\beta\Big)
\big(\mathbb Q_t^\beta(x)\big)
=
- b_t^\beta(x)
+
\frac12
\Tr_2
\Big[
\big(I_3\otimes (\nabla^\dagger\mathbb Q_t^\beta(x))^\dagger\big)
(\Hess \overleftarrow{\mathbb Q}_t^\beta)
\big(\mathbb Q_t^\beta(x)\big)
\big(I_3\otimes \nabla^\dagger\mathbb Q_t^\beta(x)\big)
\Big]. \label{PDEUp2}
\end{align}
\end{lemma}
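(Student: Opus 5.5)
The plan is to obtain both identities by differentiating the inverse relation
\[
\overleftarrow{\mathbb Q}_t^\beta\big(\mathbb Q_t^\beta(x)\big)=x,
\qquad t>0,\ x\neq0 .
\]
Identity~\eqref{PDEUp1} is the first-order chain rule. Differentiating in \(x\) gives \((\nabla^\dagger\overleftarrow{\mathbb Q}_t^\beta)(\mathbb Q_t^\beta(x))\,\nabla^\dagger\mathbb Q_t^\beta(x)=I_3\). This is legitimate because \(\mathbb Q_t^\beta\) is a \(C^2\)-diffeomorphism of \(\mathbb R^3\setminus\{0\}\): its Jacobian is invertible by~\eqref{SigmaInverse}. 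Injectivity comes from the radial form \(x\mapsto x\,S_t^\beta(x)\), where \(r\mapsto r/(1+Q_t^\beta(r))\) is strictly increasing because \(\lambda_t^\beta>0\).

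For~\eqref{PDEUp2}, I will differentiate the same identity in \(t\). This gives
\[
\big(\partial_t\overleftarrow{\mathbb Q}_t^\beta\big)(\mathbb Q_t^\beta(x))
=-(\nabla^\dagger\overleftarrow{\mathbb Q}_t^\beta)(\mathbb Q_t^\beta(x))\,\partial_t\mathbb Q_t^\beta(x).
\]
I then substitute Lemma~\ref{LemmaMartFUN}(ii), \(\partial_t\mathbb Q_t^\beta=\frac12\Delta\mathbb Q_t^\beta+(\nabla^\dagger\mathbb Q_t^\beta)\,b_t^\beta\). The drift part becomes \(-(\nabla^\dagger\overleftarrow{\mathbb Q}_t^\beta)(\mathbb Q_t^\beta(x))\,\nabla^\dagger\mathbb Q_t^\beta(x)\,b_t^\beta(x)=-b_t^\beta(x)\) by~\eqref{PDEUp1}.

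It remains to identify \(-\frac12(\nabla^\dagger\overleftarrow{\mathbb Q}_t^\beta)(\mathbb Q_t^\beta(x))\,\Delta\mathbb Q_t^\beta(x)\) with the trace term. To do this I will differentiate \(\overleftarrow{\mathbb Q}_t^\beta(\mathbb Q_t^\beta(x))=x\) twice in \(x\). In the paper's tensor convention, component by component \(k\), this reads
\[
0=\partial_{ij}\big[\overleftarrow{\mathbb Q}^{\,k}(\mathbb Q(x))\big]
=\sum_{a,b}(\partial_{ab}\overleftarrow{\mathbb Q}^{\,k})(\mathbb Q)\,\partial_i\mathbb Q^a\,\partial_j\mathbb Q^b
+\sum_a(\partial_a\overleftarrow{\mathbb Q}^{\,k})(\mathbb Q)\,\partial_{ij}\mathbb Q^a .
\]
Setting \(i=j\) and summing over \(i\), the last term becomes \(\big[(\nabla^\dagger\overleftarrow{\mathbb Q})(\mathbb Q)\,\Delta\mathbb Q\big]^k\). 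The first term becomes \(\Tr\big[(\nabla^\dagger\mathbb Q)^\dagger\,\mathrm{Hess}\,\overleftarrow{\mathbb Q}^{\,k}(\mathbb Q)\,\nabla^\dagger\mathbb Q\big]\), which is exactly the \(k\)-th component of
\[
\Tr_2\big[(I_3\otimes(\nabla^\dagger\mathbb Q)^\dagger)\,\Hess\overleftarrow{\mathbb Q}(\mathbb Q)\,(I_3\otimes\nabla^\dagger\mathbb Q)\big].
\]
Hence \(-(\nabla^\dagger\overleftarrow{\mathbb Q})(\mathbb Q)\,\Delta\mathbb Q\) equals that partial trace, and multiplying by \(\frac12\) gives~\eqref{PDEUp2}. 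Formula~\eqref{Pip} is the same second-order relation and could be used instead, but the componentwise computation avoids having to manipulate Kronecker inverses.

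The main obstacle is bookkeeping rather than analysis. One must check that the partial trace \(\Tr_2\) contracts over the matrix factor, with the first tensor slot indexing output components, so that the index placement matches the convention in which \((u^\dagger\otimes I_3)\Hess\Phi\) is the Hessian of \(u\cdot\Phi\). Regularity is also needed: \(Q_t^\beta\) must be \(C^{1}\) in \(t\) and \(C^2\) in \(x\) on \((0,\infty)\times(\mathbb R^3\setminus\{0\})\). This follows from the explicit integral formula~\eqref{DefH} by differentiating under the integral, and it gives joint regularity of \(\overleftarrow{\mathbb Q}_t^\beta\) through the inverse function theorem applied to \((t,x)\mapsto(t,\mathbb Q_t^\beta(x))\).
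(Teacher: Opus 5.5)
Your proof is correct, but it takes a different route from the paper's.

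\emph{The paper's route.} It does not invoke Lemma~\ref{LemmaMartFUN}(ii). It computes $\partial_t\mathbb Q_t^\beta(x)=-x\,\partial_tQ_t^\beta(x)/(1+Q_t^\beta(x))^2$ and the explicit formula~\eqref{DeltaK} for $\Delta\mathbb Q_t^\beta$. It then uses that $x$ and $b_t^\beta(x)$ are radial, hence eigenvectors of $(\nabla^\dagger\mathbb Q_t^\beta(x))^{-1}$ with eigenvalue $1/\lambda_t^\beta(x)$. The trace term is evaluated through~\eqref{Pip} and Kronecker-product identities as $-(\nabla^\dagger\mathbb Q_t^\beta(x))^{-1}\Delta\mathbb Q_t^\beta(x)$. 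Finally it checks that both sides of~\eqref{PDEUp2} equal $\frac{1}{2\lambda_t^\beta(x)}\frac{\Delta Q_t^\beta(x)}{(1+Q_t^\beta(x))^2}\,x$.

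\emph{Your route.} You substitute the backward equation $\partial_t\mathbb Q_t^\beta=\frac12\Delta\mathbb Q_t^\beta+(\nabla^\dagger\mathbb Q_t^\beta)\,b_t^\beta$ into the $t$-derivative of the inverse relation. The term $-b_t^\beta(x)$ then falls out of~\eqref{PDEUp1}, and the trace term is just the componentwise second-order chain rule. Your reading of $\Tr_2$ (first slot indexes output components, trace over the matrix factor) is the consistent interpretation of the paper's convention.

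\emph{What each buys.} Your argument is shorter and avoids the Kronecker inverses. It also shows that~\eqref{PDEUp2} holds for any family of diffeomorphisms solving $\partial_t\Phi_t=\mathcal L_x^{t,\beta}\Phi_t$, without radial symmetry. The paper's argument is self-contained: its computation~\eqref{DeltaK}, combined with $\nabla^\dagger\mathbb Q_t^\beta(x)\,b_t^\beta(x)=\lambda_t^\beta(x)\,b_t^\beta(x)$, is in effect a verification of Lemma~\ref{LemmaMartFUN}(ii) in the radial setting.

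\emph{One addition.} The paper omits the proof of Lemma~\ref{LemmaMartFUN}(ii), so you should include the one-line check. With $h=1+Q_t^\beta$ one has $\mathcal L_x^{t,\beta}f=\frac{1}{2h}\Delta(hf)-\frac{f\,\Delta h}{2h}$. Since $h\,\mathbb Q_t^\beta(x)=x$ is harmonic, $\mathcal L_x^{t,\beta}\mathbb Q_t^\beta=-\frac{x\,\Delta h}{2h^2}=\partial_t\mathbb Q_t^\beta$, using $\partial_t h=\frac12\Delta h$.
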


\begin{proof}
Since \(\overleftarrow{\mathbb Q}_t^\beta\) is the inverse of \(\mathbb Q_t^\beta\) on
\(\mathbb R^3\setminus\{0\}\), we have
\[
\overleftarrow{\mathbb Q}_t^\beta(\mathbb Q_t^\beta(x))=x,
\qquad x\in\mathbb R^3\setminus\{0\}.
\]
Differentiating with respect to \(x\) and using the chain rule, we obtain~\eqref{PDEUp1}.

To prove~\eqref{PDEUp2}, differentiating the identity \(\overleftarrow{\mathbb Q}_t^\beta(\mathbb Q_t^\beta(x))=x\) with respect to \(t\) and applying the chain rule yields the first equality below.
\begin{align}\label{I1}
\Big(\frac{\partial}{\partial t}\overleftarrow{\mathbb Q}_t^\beta\Big)
\big(\mathbb Q_t^\beta(x)\big)
=\,&
-
\big(\nabla^\dagger\overleftarrow{\mathbb Q}_t^\beta\big)
\big(\mathbb Q_t^\beta(x)\big)
\Big(\frac{\partial}{\partial t}\mathbb Q_t^\beta\Big)(x) \nonumber\\
=\,&
\big(\nabla^\dagger\mathbb Q_t^\beta(x)\big)^{-1}
\frac{
\big(\frac{\partial}{\partial t}Q_t^\beta\big)(x)
}{
\big(1+Q_t^\beta(x)\big)^2
}
x 
=
\frac{1}{2\,\lambda_t^\beta(x)}
\frac{
\Delta Q_t^\beta(x)
}{
\big(1+Q_t^\beta(x)\big)^2
}
x 
\end{align}
The second equality follows from~\eqref{PDEUp1} and a direct differentiation of \(\mathbb Q_t^\beta\) in~\eqref{DefUpsilon} with respect to \(t\), while the final equality uses the heat equation \(\frac{\partial}{\partial t}Q_t^\beta(x)=\frac12\Delta Q_t^\beta(x)\) for \(x\in\mathbb R^3\setminus\{0\}\) together with the fact that \(x\) is an eigenvector of \(\big(\nabla^\dagger\mathbb Q_t^\beta(x)\big)^{-1}\) with eigenvalue \(\lambda_t^\beta(x)^{-1}\).

Next, using the definition of \(\mathbb Q_t^\beta\) in~\eqref{DefUpsilon}, a direct computation gives the first equality below.
\begin{align}\label{DeltaK}
\big(\Delta\mathbb Q_t^\beta\big)(x)
=\,&
-\frac{x\,\Delta Q_t^\beta(x)}
{\big(1+Q_t^\beta(x)\big)^2}
+
\frac{2x\big|\nabla Q_t^\beta(x)\big|^2}
{\big(1+Q_t^\beta(x)\big)^3}
-
\frac{2\nabla Q_t^\beta(x)}
{\big(1+Q_t^\beta(x)\big)^2}\nonumber\\
=\,&
-\frac{x\,\Delta Q_t^\beta(x)}
{\big(1+Q_t^\beta(x)\big)^2}
+
\frac{2x\big|\nabla Q_t^\beta(x)\big|^2}
{\big(1+Q_t^\beta(x)\big)^3}
+
\frac{2\big|\nabla Q_t^\beta(x)\big|}
{\big(1+Q_t^\beta(x)\big)^2}
\frac{x}{|x|} \nonumber\\
=\,&
-\frac{\Delta Q_t^\beta(x)}
{\big(1+Q_t^\beta(x)\big)^2}x
-
2\,\lambda_t^\beta(x)\,b_t^\beta(x).
\end{align}
The second equality follows from the fact that \(Q_t^\beta\) is radial and decreasing, so that \(\nabla Q_t^\beta(x) = -|\nabla Q_t^\beta(x)|\,\frac{x}{|x|}\), while the final equality holds by combining the last two terms as follows:
\begin{align}
2\,
\frac{
|x|\big|\nabla Q_t^\beta(x)\big|^2
+
\big(1+Q_t^\beta(x)\big)\big|\nabla Q_t^\beta(x)\big|
}
{\big(1+Q_t^\beta(x)\big)^3}
\frac{x}{|x|}
=\,&
2\,
\frac{
|x|\big|\nabla Q_t^\beta(x)\big|
+
1+Q_t^\beta(x)
}
{\big(1+Q_t^\beta(x)\big)^2}
\,\frac{\big|\nabla Q_t^\beta(x)\big|}{1+Q_t^\beta(x)} \frac{x}{|x|} \nonumber \\
=\,&-
2\,\lambda_t^\beta(x)\,b_t^\beta(x). \nonumber
\end{align}
where the last equality uses the definitions of \(b_t^\beta\) and
\(\lambda_t^\beta\) in~\eqref{DefDriftFun} and~\eqref{SigmaForm}, respectively.

Using the second-order chain rule identity~\eqref{Pip} with
\(\Psi=\overleftarrow{\mathbb Q}_t^\beta\), \(\Phi=\mathbb Q_t^\beta\), and
\(\nabla^\dagger \Phi=\nabla^\dagger\mathbb Q_t^\beta\), we obtain the first equality below.
\begin{align}
\big(\Hess\overleftarrow{\mathbb Q}_t^\beta\big)\big(\mathbb Q_t^\beta(x)\big)
=\,&
-
\Big(
\nabla^\dagger\mathbb Q_t^\beta(x)\otimes\big(\nabla^\dagger\mathbb Q_t^\beta(x)\big)^\dagger
\Big)^{-1}
\big(\Hess\mathbb Q_t^\beta\big)(x)
\Big(I_3\otimes\nabla^\dagger\mathbb Q_t^\beta(x)\Big)^{-1}
\nonumber\\
=\,&
-
\Big(
\big(\nabla^\dagger\mathbb Q_t^\beta(x)\big)^{-1}
\otimes
\big((\nabla^\dagger\mathbb Q_t^\beta(x))^\dagger\big)^{-1}
\Big)
\big(\Hess\mathbb Q_t^\beta\big)(x)
\Big(I_3\otimes\big(\nabla^\dagger\mathbb Q_t^\beta(x)\big)^{-1}\Big) \nonumber
\end{align}
The second equality uses \((B\otimes C)^{-1}=B^{-1}\otimes C^{-1}\) for invertible matrices \(B\) and \(C\). Using the above identity together with
\[
\big(I_3\otimes(\nabla^\dagger\mathbb Q_t^\beta(x))^\dagger\big)
\Big(
\big(\nabla^\dagger\mathbb Q_t^\beta(x)\big)^{-1}
\otimes
\big((\nabla^\dagger\mathbb Q_t^\beta(x))^\dagger\big)^{-1}
\Big)
=
\big(\nabla^\dagger\mathbb Q_t^\beta(x)\big)^{-1}\otimes I_3 ,
\]
and
\[
\Big(I_3\otimes\big(\nabla^\dagger\mathbb Q_t^\beta(x)\big)^{-1}\Big)
\big(I_3\otimes\nabla^\dagger\mathbb Q_t^\beta(x)\big)
=
\Big(I_3I_3\Big)
\otimes
\Big(
\big(\nabla^\dagger\mathbb Q_t^\beta(x)\big)^{-1}
\nabla^\dagger\mathbb Q_t^\beta(x)
\Big)
=
I_3\otimes I_3 ,
\]
we obtain the first equality below.
\begin{align}\label{I3}
\Tr_2
\Big[
\big(I_3\otimes(\nabla^\dagger\mathbb Q_t^\beta(x))^\dagger\big)
(\Hess\overleftarrow{\mathbb Q}_t^\beta)(\mathbb Q_t^\beta(x))
\big(I_3\otimes\nabla^\dagger\mathbb Q_t^\beta(x)\big)
\Big] 
& =
-\Tr_2
\Big[
\Big(
\big(\nabla^\dagger\mathbb Q_t^\beta(x)\big)^{-1}
\otimes I_3
\Big)
\big(\Hess\mathbb Q_t^\beta\big)(x)
\Big] \nonumber\\
& =
-\big(\nabla^\dagger\mathbb Q_t^\beta(x)\big)^{-1}
\Tr_2\big[(\Hess\mathbb Q_t^\beta)(x)\big] \nonumber\\
& =
-\big(\nabla^\dagger\mathbb Q_t^\beta(x)\big)^{-1}
\big(\Delta\mathbb Q_t^\beta\big)(x) \nonumber \\
&=\,
\big(\nabla^\dagger\mathbb Q_t^\beta(x)\big)^{-1}
\Bigg[
\frac{\Delta Q_t^\beta(x)}
{\big(1+Q_t^\beta(x)\big)^2}x
+
2\,\lambda_t^\beta(x)b_t^\beta(x)
\Bigg]
\nonumber\\
&=\,
\frac{1}{\lambda_t^\beta(x)}
\frac{\Delta Q_t^\beta(x)}
{\big(1+Q_t^\beta(x)\big)^2}x
+
2b_t^\beta(x)
\end{align}
The third equality uses that
\(\Tr_2[(\Hess\mathbb Q_t^\beta)(x)]=(\Delta\mathbb Q_t^\beta)(x)\), the fourth
equality uses~\eqref{DeltaK}, and the final equality uses that both \(x\) and
\(b_t^\beta(x)\) are radial vectors, and hence are eigenvectors of
\(\big(\nabla^\dagger\mathbb Q_t^\beta(x)\big)^{-1}\) with eigenvalue
\(\lambda_t^\beta(x)^{-1}\). Combining~\eqref{I3} and~\eqref{I1} and rearranging, we obtain~\eqref{PDEUp2}.
\end{proof}

\begin{proof}[Proof of Proposition~\ref{PropositionLocalizedSDEAwayOrigin}]
Recall that the function \(\overleftarrow{\mathbb Q}_t^\beta\) denotes the inverse of \(\mathbb Q_t^\beta\), so that $X_t=\overleftarrow{\mathbb Q}_{T-t}^\beta\big(Y_t^{T,\beta}\big)$. Since \(Y_t^{T,\beta}=\mathbb Q_{T-t}^\beta(X_t)\) and \(X_s\neq0\) for all \(s\in[a,b]\), the function $(t,y)\mapsto \overleftarrow{\mathbb Q}_{T-t}^\beta(y)$ is \(C^{1,2}\) along the path \(\{(t,Y_t^{T,\beta}):t\in[a,b]\}\) and It\^o's formula yields
\begin{align*}
dX_t
&=\,
d\overleftarrow{\mathbb Q}_{T-t}^\beta\big(Y_t^{T,\beta}\big) \\
&=\,
-\Big(\frac{\partial}{\partial t}\overleftarrow{\mathbb Q}_{T-t}^\beta\Big)
\big(Y_t^{T,\beta}\big)\,dt
+
\big(\nabla^\dagger\overleftarrow{\mathbb Q}_{T-t}^\beta\big)
\big(Y_t^{T,\beta}\big)\,dY_t^{T,\beta} \\
&\quad+
\frac12
\Tr_2
\Big[
\big(I_3\otimes(\nabla^\dagger\mathbb Q_{T-t}^{\beta}(X_t))^\dagger\big)
(\Hess\overleftarrow{\mathbb Q}_{T-t}^{\beta})(Y_t^{T,\beta})
\big(I_3\otimes\nabla^\dagger\mathbb Q_{T-t}^{\beta}(X_t)\big)
\Big]\,dt \\
&=\,
\big(\nabla^\dagger\overleftarrow{\mathbb Q}_{T-t}^{\beta}\big)
\big(Y_t^{T,\beta}\big)\,dY_t^{T,\beta}
+
b_{T-t}^{\beta}(X_t)\,dt ,
\end{align*}
where the last equality follows from~\eqref{PDEUp2}. Using \(Y_t^{T,\beta}=\mathbb Q_{T-t}^{\beta}(X_t)\) and Corollary~\ref{CorollaryMartI}, we obtain
\[
\big(\nabla^\dagger\overleftarrow{\mathbb Q}_{T-t}^{\beta}\big)
\big(Y_t^{T,\beta}\big)
\,dY_t^{T,\beta}
=
\big(\nabla^\dagger\overleftarrow{\mathbb Q}_{T-t}^{\beta}\big)
\big(\mathbb Q_{T-t}^{\beta}(X_t)\big)
\nabla^\dagger\mathbb Q_{T-t}^{\beta}(X_t)\,dW_t^{T,\beta}
=
dW_t^{T,\beta},
\]
where the second equality follows from~\eqref{PDEUp1}, and recall that \(W^{T,\beta}\) is the three-dimensional standard Brownian motion from Corollary~\ref{CorollaryMartI}. Substituting the above in the previous display yields the desired SDE~\eqref{SDEToSolve} on $[a,b]$.
\end{proof}

\section{Proof of the submartingale characterization}
\label{ProofSubMartCharacterization} 

The goal of this section is to prove Proposition~\ref{PropSubMart}. The following lemma, proved in Section~\ref{ProofLemmaKbounds1}, will be used to establish the square integrability of the continuous process \(\mathbf M^{T,\beta}\) defined in~\eqref{MartingaleFormula}.

\begin{lemma} \label{LemmaKbounds1}
Let \(T>0\), \(\beta>0\), and \(x\in\mathbb R^3\setminus\{0\}\). Then
\[
\int_0^T
\int_{\mathbb R^3}
\mathlarger p_{0,s}^{T,\beta}(x,y)
\,
\bigl|
 S_{T-s}^{\beta}(y)
b_{T-s}^{\beta}(y)
\bigr|^2
\,dy\,ds
<\infty .
\]
\end{lemma}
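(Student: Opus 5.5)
Note first that $S_{T-s}^\beta b_{T-s}^\beta = \nabla Q_{T-s}^\beta/(1+Q_{T-s}^\beta)^2$. The integrand is therefore
\[
\mathlarger p_{0,s}^{T,\beta}(x,y)\,\frac{|\nabla Q_{T-s}^\beta(y)|^2}{(1+Q_{T-s}^\beta(y))^4}
=\frac{P_s^\beta(x,y)}{1+Q_T^\beta(x)}\,\frac{|\nabla Q_{T-s}^\beta(y)|^2}{(1+Q_{T-s}^\beta(y))^3}.
\]
This follows from the Doob transform \eqref{FirstTrans}: one factor $1+Q_{T-s}^\beta(y)$ from the density cancels one of the four in the denominator. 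The plan is to bound $P_s^\beta(x,y)$ and the ratio $|\nabla Q_r^\beta|^2/(1+Q_r^\beta)^3$ separately, using the explicit formulas \eqref{DefPointKer3dBeta} and \eqref{DefH}, and then integrate. The endpoint $s=T$ (where $r=T-s\to0$) and small $s$ need separate attention.

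First, I will derive pointwise estimates for $Q_r^\beta$ and its radial derivative. Write $Q_r^\beta(y)=\frac{2r}{\beta|y|}g_r(|y|)$ with $g_r(\rho)=\int_0^\infty(e^{4\pi\beta w}-1)P_r(\rho+w)\,dw$. Then $\partial_\rho Q_r = -Q_r/\rho + \frac{2r}{\beta\rho}g_r'(\rho)$, and $|g_r'|$ is controlled by a similar Gaussian integral carrying an extra factor $(\rho+w)/(2r)$. This should give
\[
|\nabla Q_r^\beta(y)|\le C\Big(\frac{Q_r^\beta(y)}{|y|}+\tilde Q_r^\beta(y)\Big),
\]
where $\tilde Q_r$ is a comparable quantity. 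The key point is that the ratio $|\nabla Q|^2/(1+Q)^3$ is harmless both near the origin and far away. Near $y=0$, with $Q\asymp c_r/|y|$ and $|\nabla Q|\asymp c_r/|y|^2$, the ratio is $\asymp c_r^{-1}|y|^{-1}$, which is integrable against $P_s^\beta(x,y)\asymp |y|^{-1}$ in $\mathbb R^3$. Away from the origin, $Q$ is small and Gaussian-decaying, so $|\nabla Q|^2$ is bounded. Using the crude bound $|\nabla Q|^2/(1+Q)^3\le |\nabla Q|^2/(1+Q)^2$ on $\{|y|\ge1\}$ may suffice there.

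Next, for fixed $s\in(0,T)$ the $y$-integral is finite by the above. To integrate in $s$, I need uniformity as $r=T-s\downarrow0$ and as $s\downarrow0$. As $s\downarrow0$, $P_s^\beta(x,\cdot)$ concentrates near $x\neq0$, where the ratio is bounded locally uniformly for $r$ near $T$, so that end is fine. As $r\downarrow0$, the explicit formula shows that $Q_r^\beta(y)$ behaves like $\frac{r}{|y|}\cdot\frac{1}{\sqrt r}e^{-|y|^2/(4r)}$, up to the $e^{4\pi\beta w}$ correction. On the ball $|y|\lesssim\sqrt r$ we have $Q_r\asymp\sqrt r/|y|$, so the ratio is $\lesssim 1/(\sqrt r\,|y|)$. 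Integrating this against the density $P_s^\beta(x,y)$, which is bounded by $C/|y|$ for $s$ near $T$, over $|y|\le\sqrt r$ gives $\int_0^{\sqrt r}\rho^2\,\rho^{-2}r^{-1/2}\,d\rho = O(1)$. On $|y|\gtrsim\sqrt r$ the Gaussian factor in $\nabla Q_r$ makes $|\nabla Q_r|^2$ integrable with total mass $O(r^{-1/2})$ or better after the change of variables $y=\sqrt r z$. Hence the inner integral is $O(r^{-1/2})$ at worst, which is integrable in $s$ near $T$.

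The main obstacle is the derivative estimate for $Q_r^\beta$ uniformly in small $r$, because of the exponential weight $e^{4\pi\beta w}$. I will handle it by completing the square: $e^{4\pi\beta w}P_r(\rho+w)$ equals $e^{4\pi^2\beta^2\cdot 4r}\,e^{-4\pi\beta\rho}$ times a shifted Gaussian. This shows that for $r\le T$ the weight only alters constants, after which the elementary Gaussian bounds go through.
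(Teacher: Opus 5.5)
Your proposal is correct and is essentially the paper's argument. It uses the same reduction to $P_s^\beta(x,y)\,|\nabla Q_{T-s}^\beta(y)|^2/(1+Q_{T-s}^\beta(y))^3$, the same separate treatment of $T-s\ge T/2$ and $T-s\le T/2$ with a split at $|y|\sim\sqrt{T-s}$, and the same key small-time bounds: $1+Q_r^\beta\gtrsim\sqrt r/|y|$ from \eqref{OnePlusQBetaBoundsOld}, and $|\nabla Q_r^\beta|\lesssim(|y|^{-1}+\sqrt r\,|y|^{-2})e^{-|y|^2/(16r)}$ from Lemma~\ref{LemSmallTimeGaussianEstimate}, which is proved there by absorbing $e^{4\pi\beta w}$ into half of the Gaussian, i.e.\ your completing-the-square idea. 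The only difference is that the paper imports the kernel bound \eqref{PtBetaBounds} and the bounds for $T-s\ge T/2$ from the literature rather than rederiving them from the explicit formulas.

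When you write it out, keep the factor $w$ via $e^{4\pi\beta w}-1\le 4\pi\beta w\,e^{4\pi\beta w}$, as the paper does. Bounding $e^{4\pi\beta w}-1$ by $e^{4\pi\beta w}$ after completing the square only gives $|\nabla Q_r^\beta|\lesssim|y|^{-2}$ on $|y|\le\sqrt r$. Against the $|y|^{-1}$-singular part of $P_s^\beta$, that yields an inner integral of order $r^{-1}$, which is not integrable at $s=T$.
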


For \(\varepsilon>0\), let
\(\{\kappa_n^{\downarrow,\varepsilon}\}_{n\in\mathbb N}\) and
\(\{\kappa_n^{\uparrow,\varepsilon}\}_{n\in\mathbb N_0}\) be the sequences of
stopping times defined by \(\kappa_0^{\uparrow,\varepsilon}=0\) and, for
\(n\in\mathbb N\),
\begin{align}\label{VARRHOS}
\kappa_n^{\downarrow,\varepsilon}
:=\,&
\inf\big\{s\in(\kappa_{n-1}^{\uparrow,\varepsilon},T]:
X_s=0\big\}, \nonumber \\
\kappa_n^{\uparrow,\varepsilon}
:=\,&
\inf\big\{s\in(\kappa_n^{\downarrow,\varepsilon},T]:
|X_s|=\varepsilon\big\},
\end{align}
where we interpret \(\inf\emptyset=\infty\). 
Next, define the process
\(\chi^\varepsilon=\{\chi_s^\varepsilon\}_{s\in[0,T]}\) by
\[
\chi_s^\varepsilon
:=
\sum_{n=1}^{\infty}
\mathbf 1_{
[\kappa_{n-1}^{\uparrow,\varepsilon},
 \kappa_n^{\downarrow,\varepsilon})
}(s),
\qquad 0\le s\le T.
\]
Equivalently, \(\chi_s^\varepsilon=1\) whenever
\(s\in [\kappa_{n-1}^{\uparrow,\varepsilon}, \kappa_n^{\downarrow,\varepsilon})\) for some \(n\in\mathbb N\), and \(\chi_s^\varepsilon=0\) otherwise.

\begin{lemma}\label{LemmaMartingaleApproximationEpsilon}
Fix \(T,\beta>0\) and \(x\in\mathbb R^3\setminus\{0\}\). For \(\varepsilon>0\), set 
\begin{align}\label{MartingaleApproximationEpsilonFormula}
\mathbf M_t^{T,\beta,\varepsilon}
:=
 S_T^\beta(x)
-
\int_0^t
\chi_s^\varepsilon
\mathbf S_s^{T,\beta}
b_{T-s}^{\beta}(X_s)
\cdot dW_s^{T,\beta},
\qquad 0\le t\le T.
\end{align}
Then \(\mathbf M^{T,\beta,\varepsilon}\) is a square-integrable martingale and
\[
\mathbf E_x^{T,\beta}
\left[
\sup_{t\in[0,T]}
\left|
\mathbf M_t^{T,\beta,\varepsilon}
-
\mathbf M_t^{T,\beta}
\right|^2
\right]
\longrightarrow0,
\qquad
\varepsilon\downarrow0.
\]
\end{lemma}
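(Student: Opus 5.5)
The plan is to treat both $\mathbf M^{T,\beta}$ and $\mathbf M^{T,\beta,\varepsilon}$ as stochastic integrals against the Brownian motion $W^{T,\beta}$ of Corollary~\ref{CorollaryMartI}. Their difference is then an $L^2$ stochastic integral, and Doob's maximal inequality together with the It\^o isometry reduce the claim to dominated convergence in $L^2(ds\otimes d\mathbf P_x^{T,\beta})$. The integrable dominating function is supplied by Lemma~\ref{LemmaKbounds1}.

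\textbf{Step 1: well-definedness and square integrability.} First check that $s\mapsto\chi_s^\varepsilon$ is progressively measurable. The times $\kappa_n^{\downarrow,\varepsilon},\kappa_n^{\uparrow,\varepsilon}$ are hitting times of closed sets by the continuous process $X$, hence $\mathcal F^{T,x}$-stopping times. Moreover $\chi^\varepsilon$ is a countable sum of indicators of stochastic intervals $[\![\kappa_{n-1}^{\uparrow,\varepsilon},\kappa_n^{\downarrow,\varepsilon}[\![$. By continuity of paths, only finitely many $n$ satisfy $\kappa_n^{\uparrow,\varepsilon}\le T$ almost surely, since each up-crossing from $0$ to the sphere $\{|y|=\varepsilon\}$ takes a positive amount of time. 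Because $0\le\chi^\varepsilon\le1$, the integrand $H_s:=\mathbf S_s^{T,\beta}b^\beta_{T-s}(X_s)$ satisfies
\[
\mathbf E_x^{T,\beta}\Big[\int_0^T|\chi_s^\varepsilon H_s|^2ds\Big]\le\int_0^T\!\!\int_{\mathbb R^3}\mathlarger p^{T,\beta}_{0,s}(x,y)\,|S^\beta_{T-s}(y)b^\beta_{T-s}(y)|^2\,dy\,ds<\infty
\]
by the Markov property and Lemma~\ref{LemmaKbounds1}. The integrand is defined off $\{X_s=0\}$, a Lebesgue-null time set by Remark~\ref{RemarkOriginNull}. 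Hence $\mathbf M^{T,\beta,\varepsilon}$, and likewise $\mathbf M^{T,\beta}$ with $\chi\equiv1$, are square-integrable continuous martingales with respect to $\{\mathcal F_t^{T,x}\}$.

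\textbf{Step 2: convergence.} Observe that $\mathbf M_t^{T,\beta}-\mathbf M_t^{T,\beta,\varepsilon}=-\int_0^t(1-\chi_s^\varepsilon)H_s\cdot dW_s^{T,\beta}$. Doob's $L^2$ inequality and the It\^o isometry then give
\[
\mathbf E_x^{T,\beta}\Big[\sup_{t\le T}|\mathbf M_t^{T,\beta,\varepsilon}-\mathbf M_t^{T,\beta}|^2\Big]\le4\,\mathbf E_x^{T,\beta}\Big[\int_0^T(1-\chi_s^\varepsilon)|H_s|^2ds\Big].
\]
The integrand is dominated by $|H_s|^2$, which is integrable by Step 1. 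It therefore suffices to show $1-\chi_s^\varepsilon\to0$ for a.e. $(s,\omega)$.

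\textbf{Step 3: pointwise behaviour of $\chi^\varepsilon$.} Fix a path and a time $s$ with $X_s\neq0$; by Remark~\ref{RemarkOriginNull} these account for almost every $s$. If $s<\tau$ then $\chi^\varepsilon_s=1$ for every $\varepsilon$. Otherwise let $g_s:=\sup\{r<s:X_r=0\}$, which satisfies $g_s<s$ by continuity. Now take $\varepsilon<\min_{r\in[g_s,s]}\!\!\!\!\!\!\quad$ hmm, more precisely take $\varepsilon<|X_s|$ small enough that $X$ reaches level $\varepsilon$ after its last zero $g_s$ and before $s$ while staying away from $0$ on $(g_s,s]$. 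Then the last down-time $\kappa_n^{\downarrow,\varepsilon}$ preceding $s$ is at most $g_s$, and the subsequent $\kappa_n^{\uparrow,\varepsilon}$ lies in $(g_s,s)$. The next zero after that is after $s$, so $s\in[\kappa_n^{\uparrow,\varepsilon},\kappa_{n+1}^{\downarrow,\varepsilon})$ and $\chi_s^\varepsilon=1$.

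The delicate point, and the main obstacle, is that the indexing of the $\kappa$'s changes with $\varepsilon$. One must argue directly from the definitions that $\kappa_n^{\downarrow,\varepsilon}\le g_s$ and that no zero occurs in $(\kappa_n^{\uparrow,\varepsilon},s]$. A natural way to avoid the bookkeeping is the characterization $\chi_s^\varepsilon=0$ iff there is $r\le s$ with $X_r=0$ and $|X_u|<\varepsilon$ for all $u\in[r,s]$. This follows by induction on the recursive definition, and it makes $1-\chi_s^\varepsilon\le\mathbf 1\{\sup_{u\in[g_s,s]}|X_u|<\varepsilon\}$. The right-hand side vanishes once $\varepsilon\le|X_s|$, so $\chi^\varepsilon_s\to1$ pointwise, and dominated convergence finishes the proof.
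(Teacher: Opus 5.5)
Your proposal is correct and follows essentially the paper's route: you get square integrability from the It\^o isometry with $0\le\chi^\varepsilon\le1$ and Lemma~\ref{LemmaKbounds1}, then apply Doob's $L^2$ inequality and dominated convergence to $\mathbf E_x^{T,\beta}\big[\int_0^T(1-\chi_s^\varepsilon)|H_s|^2\,ds\big]$. The only difference is cosmetic: the paper settles your Step~3 in one line with the bound $1-\chi_s^\varepsilon\le\mathbf 1_{\{|X_s|\le\varepsilon\}}$, which follows at once from the definition of $\kappa_n^{\uparrow,\varepsilon}$ (and also from your characterization), and then applies dominated convergence in the $(s,y)$ variables via the transition density, so the index bookkeeping you flag as delicate is never needed.
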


\begin{proof}

Observe that the process \(\mathcal N_t^{T,\beta,\varepsilon}\), where $\mathcal N_t^{T,\beta,\varepsilon}
:=
\int_0^t
\chi_s^\varepsilon
\mathbf S_s^{T,\beta}
b_{T-s}^{\beta}(X_s)\cdot dW_s^{T,\beta}$ for $t\in[0,T]$, is a local martingale and, by the It\^o isometry,
\begin{align}\label{DipDap}
\mathbf E_x^{T,\beta}
\Big[
\big|\mathcal N_T^{T,\beta,\varepsilon}\big|^2
\Big]
=\,&
\mathbf E_x^{T,\beta}
\bigg[
\int_0^T
(\chi_s^\varepsilon)^2
\big(\mathbf S_s^{T,\beta}\big)^2
\big|b_{T-s}^{\beta}(X_s)\big|^2
\,ds
\bigg] \nonumber \\
\leq\,&
\mathbf E_x^{T,\beta}
\bigg[
\int_0^T
\big(\mathbf S_s^{T,\beta}\big)^2
\big|b_{T-s}^{\beta}(X_s)\big|^2
\,ds
\bigg]   \nonumber \\
=\,&
\int_0^T
\int_{\mathbb R^3}
\mathlarger p_{0,s}^{T,\beta}(x,y)
\big|
 S_{T-s}^{\beta}(y)
b_{T-s}^{\beta}(y)
\big|^2
\,dy\,ds
<\infty ,
\end{align}
where we have used that \(\chi_s^\varepsilon \le 1\), and the finiteness follows from Lemma~\ref{LemmaKbounds1}. Hence
\(\mathcal N^{T,\beta,\varepsilon}\) is a square-integrable martingale. Therefore, the process $\mathbf M^{T,\beta,\varepsilon} =  S_T^\beta(x) - \mathcal N^{T,\beta,\varepsilon}$ is also a square-integrable martingale. Taking \(\chi^\varepsilon\equiv1\) in the same argument shows that the process \(\mathbf M^{T,\beta}\), defined in~\eqref{MartingaleFormula}, is a square-integrable martingale. Thus, by Doob's inequality and the It\^o isometry,
\begin{align*}
\mathbf E_x^{T,\beta}
\bigg[
\sup_{t\in[0,T]}
\Big|
\mathbf M_t^{T,\beta,\varepsilon}
-
\mathbf M_t^{T,\beta}
\Big|^2
\bigg]
\leq\,&
4\,
\mathbf E_x^{T,\beta}
\bigg[
\Big|
\mathbf M_T^{T,\beta,\varepsilon}
-
\mathbf M_T^{T,\beta}
\Big|^2
\bigg]
\\
=\,&
4\,
\mathbf E_x^{T,\beta}
\Bigg[
\int_0^T
(1-\chi_s^\varepsilon)
\big(\mathbf S_s^{T,\beta}\big)^2
\big|b_{T-s}^{\beta}(X_s)\big|^2
\,ds
\Bigg] \nonumber \\
\le\,&
4
\int_0^T
\int_{\mathbb R^3}
\mathbf 1_{\{|y|\le\varepsilon\}}
\mathlarger p_{0,s}^{T,\beta}(x,y)
\big|
 S_{T-s}^{\beta}(y)
b_{T-s}^{\beta}(y)
\big|^2
\,dy\,ds,
\end{align*}
where in the equality we have also used that \(\chi_s^\varepsilon\in\{0,1\}\), the second inequality uses that \(1-\chi_s^\varepsilon \le \mathbf 1_{\{|X_s|\le\varepsilon\}}\). The above converges to \(0\) as \(\varepsilon\downarrow0\) by Lemma~\ref{LemmaKbounds1} and the dominated convergence theorem.
\end{proof}

\begin{remark}\label{RemarkMartingaleApproximationSum}
The process \(\mathbf M^{T,\beta,\varepsilon}\) given in~\eqref{MartingaleApproximationEpsilonFormula} admits the alternative representation
\begin{align}\label{MartingaleApproximationEpsilonFormula2}
\mathbf M_t^{T,\beta,\varepsilon}
=
 S_T^\beta(x)
+
\sum_{n=1}^{\infty}
\left(
\mathbf S_{t\wedge\kappa_n^{\downarrow,\varepsilon}}^{T,\beta}
-
\mathbf S_{t\wedge\kappa_{n-1}^{\uparrow,\varepsilon}}^{T,\beta}
\right),
\qquad 0\le t\le T.
\end{align}
Indeed, on each interval $[\kappa_{n-1}^{\uparrow,\varepsilon},
\kappa_n^{\downarrow,\varepsilon})$ the process \(X\) stays away from the origin. Hence, by
Proposition~\ref{PropositionLocalizedSDEAwayOrigin} and It\^o's formula,
\[
d\mathbf S_t^{T,\beta}
=
d S_{T-t}^{\beta}(X_t)
=
-\mathbf S_t^{T,\beta} b_{T-t}^{\beta}(X_t)
\cdot dW_t^{T,\beta}
\]
on this interval; see the application of It\^o's formula below~\eqref{ADEF}. Since \(\chi^\varepsilon\) is equal to \(1\) precisely on the intervals \([\kappa_{n-1}^{\uparrow,\varepsilon}, \kappa_n^{\downarrow,\varepsilon})\) and equal to \(0\) otherwise, we obtain
\begin{align*}
-
\int_0^t
\chi_s^\varepsilon
\mathbf S_s^{T,\beta}
b_{T-s}^{\beta}(X_s)
\cdot dW_s^{T,\beta}
=\,
\sum_{n=1}^{\infty}
\int_{0}^{t}
\mathbf 1_{
[\kappa_{n-1}^{\uparrow,\varepsilon},
 \kappa_n^{\downarrow,\varepsilon})
}(s)
\,d\mathbf S_s^{T,\beta}
=\,
\sum_{n=1}^{\infty}
\left(
\mathbf S_{t\wedge\kappa_n^{\downarrow,\varepsilon}}^{T,\beta}
-
\mathbf S_{t\wedge\kappa_{n-1}^{\uparrow,\varepsilon}}^{T,\beta}
\right).
\end{align*}
Substituting this identity into~\eqref{MartingaleApproximationEpsilonFormula}
gives~\eqref{MartingaleApproximationEpsilonFormula2}.
\end{remark}

\subsection{Proof of Proposition~\ref{PropSubMart}} \label{ProofPropSubMart}

\begin{proof}
Since \(0\leq  S_r^\beta(x)\leq1\) for all \(r\geq0\) and
\(x\in\mathbb R^3\), the process \(\mathbf S^{T,\beta}\) is bounded.
Moreover, since \(X\) is continuous and \((r,x)\mapsto S_r^\beta(x)\) extends continuously to \([0,\infty)\times\mathbb R^3\) by setting \( S_r^\beta(0):=0\), the process \(\mathbf S^{T,\beta}\) is continuous. To prove the submartingale property, let \(0\le s<t\le T\). By the Markov property of \(\mathbf P_x^{T,\beta}\), we have
\begin{align}\label{Check}
\mathbf E_x^{T,\beta}
\big[
\mathbf S_t^{T,\beta}
\,\big|\,
\mathcal F_s^{T,x}
\big]
&=
\mathbf E_x^{T,\beta}
\left[
 S_{T-t}^{\beta}(X_t)
\,\middle|\,
\mathcal F_s^{T,x}
\right] \nonumber\\
&=
\int_{\mathbb R^3}
\mathlarger p_{s,t}^{T,\beta}(X_s,y)
 S_{T-t}^{\beta}(y)
\,dy \nonumber\\
&=
 S_{T-s}^{\beta}(X_s)
\int_{\mathbb R^3}
P_{t-s}^{\beta}(X_s,y)
\,dy
\geq
 S_{T-s}^{\beta}(X_s)
=
\mathbf S_s^{T,\beta},
\end{align}
Here the third equality follows from definitions~\eqref{FirstTrans} and~\eqref{DefpFunction}, and the inequality uses that $\int_{\mathbb R^3} P_r^\beta(z,y)\,dy\geq1$ for all \(r>0\) and \(z\in\mathbb R^3\setminus\{0\}\). Since \(\mathbf S^{T,\beta}\) is a bounded continuous submartingale with respect to the augmented filtration \(\{\mathcal F_t^{T,x}\}_{t\in[0,T]}\), which satisfies the usual conditions, the Doob-Meyer decomposition theorem~\cite[Thm.~1.4.10]{Karatzas} applies. Consequently, there exist a continuous square-integrable martingale
\(\widetilde{\mathbf M}^{T,\beta}\) and a continuous predictable increasing process
\(\widetilde{\mathbf A}^{T,\beta}\), with
\(\widetilde{\mathbf A}_0^{T,\beta}=0\), such that
\[
\mathbf S_t^{T,\beta}
=
\widetilde{\mathbf M}_t^{T,\beta}
+
\widetilde{\mathbf A}_t^{T,\beta},
\qquad t\in[0,T].
\]
It remains to identify these two components explicitly. To this end, observe that taking \(\chi^\varepsilon\equiv1\) in the same argument in the proof of Lemma~\ref{LemmaMartingaleApproximationEpsilon}, shows that the process \(\mathbf M^{T,\beta}\), defined in~\eqref{MartingaleFormula}, is a square-integrable martingale.

Since $\mathbf S_t^{T,\beta}:= S_{T-t}^{\beta}(X_t)$ and
$\nabla S_t^\beta(y)=- S_t^\beta(y)b_t^\beta(y)$ for \(y\neq0\), it follows from~\eqref{MartingaleFormula} that the process $\mathbf A^{T,\beta}=\{\mathbf A_t^{T,\beta}\}_{t\in[0,T]}$, defined by $\mathbf A_t^{T,\beta}:=\mathbf S_t^{T,\beta}-\mathbf M_t^{T,\beta}$, can be expressed as
\begin{align}\label{ADEF}
\mathbf A_t^{T,\beta}
=\,&
 S_{T-t}^{\beta}(X_t)
-
 S_T^{\beta}(x)
-
\int_0^t
1_{\{X_s\neq0\}}
\big(\nabla S_{T-s}^{\beta}\big)(X_s)
\cdot dW_s^{T,\beta} \nonumber \\
=\,&
 S_{T-t}^{\beta}(X_t)
-
 S_T^{\beta}(x)
-
\int_0^t
\big(\nabla S_{T-s}^{\beta}\big)(X_s)
\cdot dW_s^{T,\beta},
\qquad
\mathbf P_x^{T,\beta}\text{-a.s.}
\end{align}
where the second equality holds by Remark~\ref{RemarkOriginNull}, after defining \(\nabla S_t^\beta(0):=0\). To prove that \(\mathbf A^{T,\beta}\) is constant on the excursions of \(X\) away from the origin, fix an interval \([a,b]\subset[0,T]\) such that
\(X_s\neq0\) for all \(s\in[a,b]\). 
Since \((t,y)\mapsto  S_{T-t}^{\beta}(y)\) is \(C^{1,2}\) away from
the origin, It\^o's formula gives, for \(t\in[a,b]\),
\begin{align*}
d S_{T-t}^{\beta}(X_t)
=\,&
-\Big(\frac{\partial}{\partial t}
 S_{T-t}^{\beta}\Big)(X_t)\,dt
+
\big(\nabla S_{T-t}^{\beta}\big)(X_t)\cdot dX_t
+
\frac12
\big(\Delta S_{T-t}^{\beta}\big)(X_t)\,dt  \\
=\,&
\Bigg[
-\Big(\frac{\partial}{\partial t}
 S_{T-t}^{\beta}\Big)(X_t)
+
\big(\nabla S_{T-t}^{\beta}\big)(X_t)\cdot
b_{T-t}^{\beta}(X_t)
+
\frac12
\big(\Delta S_{T-t}^{\beta}\big)(X_t)
\Bigg]dt  \nonumber \\
+ \,&
\big(\nabla S_{T-t}^{\beta}\big)(X_t)\cdot dW_t^{T,\beta} \nonumber \\
=\,&
\big(\nabla S_{T-t}^{\beta}\big)(X_t)\cdot dW_t^{T,\beta},
\end{align*}
where second equality uses that the process \(X\) satisfies the SDE~\eqref{SDEToSolve} on $[a,b]$ by Proposition~\ref{PropositionLocalizedSDEAwayOrigin} and the final equality uses that the term in square brackets is zero by~\eqref{PartialForP}. Therefore, using the identity above and writing~\eqref{ADEF} in differential form, we obtain
\[
d\mathbf A_t^{T,\beta}
=
d S_{T-t}^{\beta}(X_t)
-
\big(\nabla S_{T-t}^{\beta}\big)(X_t)
\cdot dW_t^{T,\beta}
=
0,
\qquad t\in[a,b].
\]
Thus, \(\mathbf A^{T,\beta}\) is constant on \([a,b]\).

It remains to prove that \(\mathbf A^{T,\beta}\) is increasing. For
\(\varepsilon>0\), let
\(\{\kappa_n^{\downarrow,\varepsilon}\}_{n\in\mathbb N}\) and
\(\{\kappa_n^{\uparrow,\varepsilon}\}_{n\in\mathbb N_0}\) be the stopping
times defined in~\eqref{VARRHOS}. Define the process
\(\mathbf A^{T,\beta,\varepsilon}
=
\{\mathbf A_t^{T,\beta,\varepsilon}\}_{t\in[0,T]}\)
by
\[
\mathbf A_t^{T,\beta,\varepsilon}
:=
\sum_{n=1}^{\infty}
\left(
\mathbf S_{t\wedge\kappa_n^{\uparrow,\varepsilon}}^{T,\beta}
-
\mathbf S_{t\wedge\kappa_n^{\downarrow,\varepsilon}}^{T,\beta}
\right)
=
\sum_{n=1}^{\infty}
\mathbf S_{t\wedge\kappa_n^{\uparrow,\varepsilon}}^{T,\beta}
\mathbf 1_{\{\kappa_n^{\downarrow,\varepsilon}\le t\}},
\]
where the second equality uses that $\mathbf S_{\kappa_n^{\downarrow,\varepsilon}}^{T,\beta} =  S_{T-\kappa_n^{\downarrow,\varepsilon}}^\beta(0) = 0$ on \(\{\kappa_n^{\downarrow,\varepsilon}\le T\}\), consequently, $\mathbf S_{t\wedge\kappa_n^{\downarrow,\varepsilon}}^{T,\beta} = 0$ whenever \(\kappa_n^{\downarrow,\varepsilon}\le t\), while $\mathbf 1_{\{\kappa_n^{\downarrow,\varepsilon}\le t\}}=0$ otherwise. Let \(\mathbf M^{T,\beta,\varepsilon}\) be the process  given in~\eqref{MartingaleApproximationEpsilonFormula2}, then we have
\[
\mathbf M_t^{T,\beta}
+
\mathbf A_t^{T,\beta}
=
\mathbf S_t^{T,\beta}
=
\mathbf M_t^{T,\beta,\varepsilon}
+
\mathbf A_t^{T,\beta,\varepsilon},
\qquad 0\le t\le T.
\]
Consequently, $\mathbf A_t^{T,\beta,\varepsilon} - \mathbf A_t^{T,\beta} = \mathbf M_t^{T,\beta} - \mathbf M_t^{T,\beta,\varepsilon}$ for $0\le t\le T$. Therefore, by Lemma~\ref{LemmaMartingaleApproximationEpsilon}, we have
\[
\mathbf E_x^{T,\beta}
\left[
\sup_{t\in[0,T]}
\left|
\mathbf A_t^{T,\beta,\varepsilon}
-
\mathbf A_t^{T,\beta}
\right|^2
\right]
\longrightarrow0,
\qquad
\varepsilon\downarrow0.
\]
In particular, along a sequence \(\varepsilon_k\downarrow0\), we may assume that
\[
\sup_{t\in[0,T]}
\left|
\mathbf A_t^{T,\beta,\varepsilon_k}
-
\mathbf A_t^{T,\beta}
\right|
\longrightarrow0,
\qquad
\mathbf P_x^{T,\beta}\text{-a.s.}
\]

Fix \(\delta\in(0,T)\). For \(u\in[0,T-\delta]\), whenever
\(\mathbf A_u^{T,\beta,\varepsilon}\) decreases from its previous running
maximum, the decrease can only occur during one of the intervals
\(
[\kappa_n^{\downarrow,\varepsilon},
 \kappa_n^{\uparrow,\varepsilon})
\),
on which \(|X_u|<\varepsilon\). On these intervals, we have
\[
\mathbf S_u^{T,\beta}
=
 S_{T-u}^{\beta}(X_u)
=
\frac{1}{1+Q_{T-u}^\beta(X_u)}
=
\frac{1}{1+\bar Q_{T-u}^\beta(|X_u|)}
\le
\frac{1}{1+\bar Q_\delta^\beta(\varepsilon)},
\]
where recall that \(\bar Q_t^\beta\) denotes the radial profile of \(Q_t^\beta\), and the inequality follows from \(|X_u|<\varepsilon\), \(T-u\ge\delta\), together with the facts that \(r\mapsto \bar Q_t^\beta(r)\) is decreasing, and \(t\mapsto \bar Q_t^\beta(r)\) is increasing. Hence, for
\(0\le s\le t\le T-\delta\),
\[
\mathbf A_t^{T,\beta,\varepsilon}
\ge
\mathbf A_s^{T,\beta,\varepsilon}
-
\frac{1}{1+\bar Q_\delta^\beta(\varepsilon)}.
\]
Also, by~\eqref{OnePlusQBetaBoundsOld}, we have
\[
0
\le
\frac{1}{1+\bar Q_\delta^\beta(\varepsilon)}
\le
\frac{\varepsilon\sqrt{\pi}}{\sqrt{\delta}}
e^{\frac{\varepsilon^2}{4\delta}}
\longrightarrow 0,
\qquad
\varepsilon\downarrow0.
\]
Therefore, passing to the sequence \(\varepsilon_k\downarrow0\) in the preceding inequality
and using the uniform convergence
\(\mathbf A^{T,\beta,\varepsilon_k}\to\mathbf A^{T,\beta}\), we obtain
\[
\mathbf A_t^{T,\beta}
\ge
\mathbf A_s^{T,\beta},
\qquad
0\le s\le t\le T-\delta,
\qquad
\mathbf P_x^{T,\beta}\text{-a.s.}
\]
Thus \(\mathbf A^{T,\beta}\) is increasing on \([0,T-\delta]\). Since
\(\delta\in(0,T)\) was arbitrary and \(\mathbf A^{T,\beta}\) is continuous, it
follows that \(\mathbf A^{T,\beta}\) is increasing on \([0,T]\).
\end{proof}

\section{The probability of visiting the  origin}

Let \(\mathcal O_T\) denote the event consisting of all outcomes \(\omega\in\Omega\) such that the corresponding path visits the origin. Then \(\mathcal O_T=\{\tau\le T\}\).

\subsection{Proof of Theorem~\ref{ThmSubMART}}\label{SubsectionThmSubMART}

\begin{proof}

\noindent Part (i). Recall from Proposition~\ref{PropSubMart} that
\(\mathbf S^{T,\beta} = \mathbf M^{T,\beta} + \mathbf A^{T,\beta}\), where \(\mathbf M^{T,\beta}\) is a continuous martingale and the continuous increasing component \(\mathbf A^{T,\beta}\) remains constant on every interval on which \(X\) does not hit the origin. Since \(x\neq0\), the process \(X\) does not hit the origin on \([0,\tau)\), and
$\mathbf A_0^{T,\beta} = \mathbf S_0^{T,\beta} - \mathbf M_0^{T,\beta} =0$, we have
$\mathbf A_t^{T,\beta}=0$ for $0\le t<\tau$. Consequently,
$\mathbf S_t^{T,\beta} = \mathbf M_t^{T,\beta}$ for $0\le t<\tau,$
and hence the stopped process $\big\{
\mathbf S_{t\wedge\tau}^{T,\beta}
\big\}_{t\in[0,T]}$ is a bounded martingale. Therefore, by the optional stopping theorem,
\[
 S_T^\beta(x)
=
\mathbf S_0^{T,\beta}
=
\mathbf E_x^{T,\beta}
\big[
\mathbf S_{\tau\wedge T}^{T,\beta}
\big].
\]
Next, on the event \(\{\tau\le T\}\), we have \(X_\tau=0\) and so
\(\mathbf S_{\tau\wedge T}^{T,\beta} =  S_{T-\tau}^{\beta}(0) = 0\).
On the other hand, on
\(\{\tau>T\}\), we have
\(\mathbf S_{\tau\wedge T}^{T,\beta} =  S_0^\beta(X_T) = 1\).
Consequently,
\begin{align}\label{1OComp}
\mathbf S_{\tau\wedge T}^{T,\beta}
=
\mathbf 1_{\{\tau>T\}}
=
\mathbf 1_{\mathcal O_T^c},
\qquad
\mathbf P_x^{T,\beta}\text{-a.s.}
\end{align}
Substituting~\eqref{1OComp} into the previous display and recalling that $ S_T^\beta(x) =\big(1+Q_T^\beta(x)\big)^{-1}$ yields the desired result. \vspace{.2cm}

\noindent Part (ii). Define $\widetilde{\mathbf P}_x^{T,\beta} := \mathbf P_x^{T,\beta} \big[\,\cdot\,\big|\,\mathcal O_T^c\big]$. Recall from the proof of Part~(i) that \(\{\mathbf S_{t\wedge\tau}^{T,\beta}\}_{t\in[0,T]}\) is a bounded
martingale. Hence, for every \(t\in[0,T]\), we have
\[
\mathbf E_x^{T,\beta}
\big[
\mathbf 1_{\mathcal O_T^c}
\,\big|\,
\mathcal F_t^{T,x}
\big]
\overset{\eqref{1OComp}}{=}
\mathbf E_x^{T,\beta}
\big[
\mathbf S_{\tau\wedge T}^{T,\beta}
\,\big|\,
\mathcal F_t^{T,x}
\big]
=
\mathbf S_{t\wedge\tau}^{T,\beta}.
\]
Thus the density process of \(\widetilde{\mathbf P}_x^{T,\beta}\) with respect to
\(\mathbf P_x^{T,\beta}\) is
\[
Z_t^{T,\beta}
:=
\frac{
d\widetilde{\mathbf P}_x^{T,\beta}
}{
d\mathbf P_x^{T,\beta}
}
\bigg|_{\mathcal F_t^{T,x}}
=
\frac{
\mathbf E_x^{T,\beta}
\big[
\mathbf 1_{\mathcal O_T^c}
\,\big|\,
\mathcal F_t^{T,x}
\big]
}{
\mathbf P_x^{T,\beta}[\mathcal O_T^c]
}
=
\big(1+Q_T^\beta(x)\big)\,\mathbf S_{t\wedge\tau}^{T,\beta},
\qquad 0\le t\le T,
\]
where the last equality also uses Part~(i). The process
\(\{Z_t^{T,\beta}\}_{t\in[0,T]}\) is a bounded continuous martingale, with \(Z_0^{T,\beta}=1\).
Hence, for \(0\le t\le T\), we have
\begin{align}\label{Emmy}
dZ_t^{T,\beta}
=\,&
\big(1+Q_T^\beta(x)\big)
d\mathbf S_{\tau\wedge t}^{T,\beta}
\nonumber\\
=\,&
-
\mathbf 1_{\{t<\tau\}}
\big(1+Q_T^\beta(x)\big)
\mathbf S_t^{T,\beta}
b_{T-t}^\beta(X_t)\cdot dW_t^{T,\beta}
=\,
-
Z_t^{T,\beta}
b_{T-t}^\beta(X_t)\cdot dW_t^{T,\beta},
\end{align}
where the second equality follows from~\eqref{SDEforSuptoTau}, and the last equality uses \(Z_t^{T,\beta} = \mathbf 1_{\{t<\tau\}}
\big(1+Q_T^\beta(x)\big)\mathbf S_t^{T,\beta}\). It remains to verify L\'evy's characterization under
\(\widetilde{\mathbf P}_x^{T,\beta}\).

\noindent\textit{The process \(\{X_t\}_{t\in[0,T]}\) is a
\(\widetilde{\mathbf P}_x^{T,\beta}\)-martingale.}
For \(0\le t\le T\), applying It\^o's product formula to the
\(\mathbb R^3\)-valued process
\(\{Z_t^{T,\beta}X_t\}_{t\in[0,T]}\), we obtain
\begin{align}\label{ZX}
d\big(Z_t^{T,\beta}X_t\big)
=\,&
X_t\,dZ_t^{T,\beta}
+
Z_t^{T,\beta}\,dX_t
+
dX_t\,dZ_t^{T,\beta}.
\end{align}
By definition, \(Z_t^{T,\beta}=0\) for \(t\in[\tau,T]\), and hence
\(dZ_t^{T,\beta}=0\) on \([\tau,T]\). Therefore it suffices to analyze
\eqref{ZX} on the stochastic interval \([0,\tau)\). On this interval,
the process \(X\) has not yet reached the origin, and consequently,
using~\eqref{SDEToSolve} and~\eqref{Emmy}, we obtain
\begin{align*}
dX_t\,dZ_t^{T,\beta}
&=
\Big(
dW_t^{T,\beta}
+
b_{T-t}^{\beta}(X_t)\,dt
\Big)
\Big(
-
Z_t^{T,\beta}
b_{T-t}^{\beta}(X_t)\cdot dW_t^{T,\beta}
\Big)
\\
&=
-
Z_t^{T,\beta}
b_{T-t}^{\beta}(X_t)\,dt
=
Z_t^{T,\beta}dW_t^{T,\beta}
-
Z_t^{T,\beta}dX_t,
\end{align*}
where the last equality again follows from~\eqref{SDEToSolve}.
Substituting this identity into~\eqref{ZX}, we obtain, on
\([0,\tau)\),
\begin{align}\label{ZX2}
d\big(Z_t^{T,\beta}X_t\big)
=
X_t\,dZ_t^{T,\beta}
+
Z_t^{T,\beta}\,dW_t^{T,\beta}.
\end{align}
Since \(Z_t^{T,\beta}X_t=0\) for \(t\in[\tau,T]\), the process
\(\{Z_t^{T,\beta}X_t\}_{t\in[0,T]}\) is constant on \([\tau,T]\).
It follows from~\eqref{ZX2} that the process
\(\{Z_t^{T,\beta}X_t\}_{t\in[0,T]}\) is a local
\(\mathbf P_x^{T,\beta}\)-martingale. Since \(Z^{T,\beta}\) is bounded
and the second moments of \(X_t\) were shown to be finite in the proof
of Lemma~\ref{LemmaL2IncrementY}, it follows that
\(\{Z_t^{T,\beta}X_t\}_{t\in[0,T]}\) is square-integrable, and hence a
\(\mathbf P_x^{T,\beta}\)-martingale. Since \(Z^{T,\beta}\) is the
density process of \(\widetilde{\mathbf P}_x^{T,\beta}\) with respect to
\(\mathbf P_x^{T,\beta}\), Bayes' rule then yields that
\(\{X_t\}_{t\in[0,T]}\) is a
\(\widetilde{\mathbf P}_x^{T,\beta}\)-martingale. \vspace{.2cm}

\noindent \textit{The process \(\{(v\cdot X_t)^2-t\}_{t \in [0, T]}\) is a \(\widetilde{\mathbf P}_x^{T,\beta}\)-martingale for any unit vector \(v\in\mathbb R^3\).} Let \(v\in\mathbb R^3\) be a unit vector and  define the processes
\[
X_t^v:=v\cdot X_t,
\qquad
W_t^{T,\beta,v}:=v\cdot W_t^{T,\beta}.
\]
Recall that \(Z_t^{T,\beta}=0\) and \(dZ_t^{T,\beta}=0\) on \([\tau,T]\); hence all \(Z^{T,\beta}\)-weighted differentials below vanish on \([\tau,T]\).
Therefore it suffices to carry out the computation on the stochastic
interval \([0,\tau)\). On this interval, Proposition~\ref{PropositionLocalizedSDEAwayOrigin} yields
\[
dX_t^v
=
dW_t^{T,\beta,v}
+
v\cdot b_{T-t}^{\beta}(X_t)\,dt,
\qquad
dW_t^{T,\beta,v}\,dW_t^{T,\beta}
=
v\,dt .
\]
Applying It\^o's product formula to
\(\{(X_t^v)^2Z_t^{T,\beta}\}_{t\in[0,T]}\) and using the above differential identities together with~\eqref{Emmy}, we obtain, on \([0,\tau)\),
\begin{align*}
d\Big((X_t^v)^2Z_t^{T,\beta}\Big)
=\,&
Z_t^{T,\beta}\,d\big((X_t^v)^2\big)
+
(X_t^v)^2\,dZ_t^{T,\beta}
+
d\big((X_t^v)^2\big)dZ_t^{T,\beta}
\\
=\,&
Z_t^{T,\beta}
\Big(
2X_t^v\,dW_t^{T,\beta,v}
+
2X_t^v\,v\cdot b_{T-t}^{\beta}(X_t)\,dt
+
dt
\Big)
\\
+\,&
(X_t^v)^2\,dZ_t^{T,\beta}
-
2Z_t^{T,\beta}X_t^v\,v\cdot b_{T-t}^{\beta}(X_t)\,dt
\\
=\,&
2Z_t^{T,\beta}X_t^v\,dW_t^{T,\beta,v}
+
(X_t^v)^2\,dZ_t^{T,\beta}
+
Z_t^{T,\beta}\,dt .
\end{align*}
Here the last equality follows by cancellation of the drift terms. Since all
terms vanish on \([\tau,T]\), the above identity determines the
differential on the whole interval \([0,T]\), and consequently the process
\(\big\{\big((X_t^v)^2-t\big)Z_t^{T,\beta}\big\}_{t\in[0,T]}\) is a local
\(\mathbf P_x^{T,\beta}\)-martingale. As before, the boundedness of
\(Z^{T,\beta}\) and the finite second moments of \(X_t\) imply that this
local martingale is a \(\mathbf P_x^{T,\beta}\)-martingale, and consequently
Bayes' rule yields that
\(\big\{(X_t^v)^2-t\big\}_{t\in[0,T]}\) is a
\(\widetilde{\mathbf P}_x^{T,\beta}\)-martingale. Since we have already shown
that \(\{X_t\}_{t\in[0,T]}\) is a continuous
\(\widetilde{\mathbf P}_x^{T,\beta}\)-martingale, L\'evy's characterization
theorem implies that \(\{X_t\}_{t\in[0,T]}\) is a three-dimensional Brownian
motion started from \(x\) under \(\widetilde{\mathbf P}_x^{T,\beta}\). \vspace{.2cm}

\noindent Part (iii). Let \(A\in\mathcal F_{\sigma}^{T,x}\). Since the conditional law $\widetilde{\mathbf P}_x^{T,\beta} =
\mathbf P_x^{T,\beta} \big[\,\cdot\,\big|\,\mathcal O_T^c\big]$ coincides with the three-dimensional Wiener measure \(\mathbf P_x\) by Part~(ii), we obtain the second equality below
\begin{align} \label{EqRightMeasure}
\mathbf P_x^{T,\beta}
\big[
A\cap\mathcal O_T^c
\big]
&=
\mathbf P_x^{T,\beta}
\big[
A\,\big|\,\mathcal O_T^c
\big]
\,
\mathbf P_x^{T,\beta}
\big[
\mathcal O_T^c
\big]
=
\mathbf P_x[A]\,\mathbf P_x^{T,\beta}[\mathcal O_T^c]
=
\frac{1}{1+Q_T^\beta(x)}
\mathbf P_x[A]
\end{align}
Here the last equality uses Part~(i). On the other hand, by the Markov property, we have
\begin{align}
\mathbf 1_{\{\sigma<\tau\}}
\mathbf P_x^{T,\beta}
\big[
\mathcal O_T^c
\,\big|\,
\mathcal F_{\sigma}^{T,x}
\big]
&=
\mathbf 1_{\{\sigma<\tau\}}
\mathbf P_{X_{\sigma}}^{T-\sigma,\beta}
\big[
\mathcal O_{T-\sigma}^c
\big]
=
\mathbf 1_{\{\sigma<\tau\}}
\frac{1}{1+Q_{T-\sigma}^{\beta}(X_{\sigma})}. \nonumber
\end{align}
Using the inclusion \(\mathcal O_T^c\subseteq\{\sigma<\tau\}\) and the preceding identity, we obtain the final equality below.
\begin{align}
\frac{1}{1+Q_T^\beta(x)}
\mathbf P_x[A]
&\overset{\eqref{EqRightMeasure}}{=} 
\mathbf P_x^{T,\beta}
\big[
A\cap\mathcal O_T^c
\big] \nonumber \\
&=
\mathbf E_x^{T,\beta}
\left[
\mathbf 1_A
\mathbf P_x^{T,\beta}
\big[
\mathcal O_T^c
\,\big|\,
\mathcal F_{\sigma}^{T,x}
\big]
\right] 
=
\mathbf E_x^{T,\beta}
\left[
\mathbf 1_{A\cap\{\sigma<\tau\}}
\frac{1}{1+Q_{T-\sigma}^{\beta}(X_{\sigma})}
\right] \nonumber
\end{align}
Since \(1+Q_{T-\sigma}^{\beta}(X_{\sigma})\) is strictly positive and \(\mathcal F_{\sigma}^{T,x}\)-measurable, the preceding equality identifies \(\big(1+Q_{T-\sigma}^{\beta}(X_{\sigma})\big)^{-1}\) as the Radon--Nikodym derivative of the measure \(B\mapsto \mathbf P_x[B]/(1+Q_T^\beta(x))\) with respect to the measure \(B\mapsto \mathbf P_x^{T,\beta}[B\cap\{\sigma<\tau\}]\). Hence,
\[
\mathbf P_x^{T,\beta}
\big[
A\cap\{\sigma<\tau\}
\big]
=
\frac{1}{1+Q_T^\beta(x)}
\mathbf E_x
\left[
\mathbf 1_A
\big(
1+Q_{T-\sigma}^{\beta}(X_{\sigma})
\big)
\right].
\]
In particular, taking \(A=\Omega\) in the above equality gives
\(\mathbf P_x^{T,\beta}[\sigma<\tau] = \big(1+Q_T^\beta(x)\big)^{-1} \mathbf E_x \big[ 1+Q_{T-\sigma}^{\beta}(X_{\sigma}) \big]\). Using this and the preceding identity in the definition of the conditional probability \(\mathbf P_x^{T,\beta}\big[\,\cdot\,\big|\,\sigma<\tau\big]\) yields the desired identity. \vspace{.2cm}

\noindent Part (iv). Let \(s\in[0,T]\) be fixed. Since
\(\mathcal O_T^c\subseteq\{\tau>s\}\), using~\eqref{1OComp} we obtain the first equality below.
\begin{align}
\mathbf E_x^{T,\beta}
\Big[
\mathbf S_{\tau\wedge T}^{T,\beta}
\Big]
&=
\mathbf E_x^{T,\beta}
\Big[
\mathbf S_{\tau\wedge T}^{T,\beta}
1_{\{\tau>s\}}
\Big] \nonumber \\
&=\mathbf E_x^{T,\beta}
\Big[
\mathbf E_x^{T,\beta}
\Big[
\mathbf S_{\tau\wedge T}^{T,\beta}
\,\Big|\,
\mathcal F_s^{T,x}
\Big]
1_{\{\tau>s\}}
\Big]
\nonumber\\
&=
\mathbf E_x^{T,\beta}
\Big[
\mathbf E_{X_s}^{T-s,\beta}
\Big[
\mathbf S_{\tau\wedge(T-s)}^{T-s,\beta}
\Big]
1_{\{\tau>s\}}
\Big]
\nonumber\\
&=
\mathbf E_x^{T,\beta}
\Big[
S_{T-s}^\beta(X_s)
1_{\{\tau>s\}}
\Big] \nonumber \\
&=
\mathbf P_x^{T,\beta}[\tau>s]\,
\mathbf E_x^{T,\beta}
\Big[
S_{T-s}^\beta(X_s)
\,\Big|\,
\tau>s
\Big]
\nonumber \\
&=
\mathbf P_x^{T,\beta}[\tau>s]\,
\frac{
1
}{
\int_{\mathbb R^3}
P_s(x,y)
\big(1+Q_{T-s}^\beta(y)\big)\,dy
} \nonumber
\end{align}
Here the third equality follows from the Markov property, the fourth equality holds by the same reasoning as the first, the fifth equality follows from the definition of conditional expectation, and the final equality applies Part~(iii) with the nonrandom stopping time \(\sigma=s\). Using the inclusion \(\{\tau>T\}\subseteq\{\tau>s\}\) and the expression for \(\mathbf P_x^{T,\beta}[\tau>s]\) obtained from above, we get
\begin{align}
\mathbf P_x^{T,\beta}
\big[
\tau>s
\,\big|\,
\tau\le T
\big]
&=
\frac{
\mathbf P_x^{T,\beta}[\tau>s]
-
\mathbf P_x^{T,\beta}[\tau>T]
}{
\mathbf P_x^{T,\beta}[\tau\le T]
}
\nonumber\\
&=
\frac{
\mathbf E_x^{T,\beta}
\big[
\mathbf S_{\tau\wedge T}^{T,\beta}
\big]\int_{\mathbb R^3} P_s(x,y)
\bigl(1+Q_{T-s}^{\beta}(y)\bigr)\,dy
-
\mathbf P_x^{T,\beta}[\tau > T]
}{
\mathbf P_x^{T,\beta}[\tau\le T]
} . \nonumber
\end{align}
Finally, using~\eqref{1OComp} together with Part~(i), we obtain the desired result.
\end{proof}

\section{Proof of auxiliary lemmas} \label{ProofAuxLemmas}

For each \(\beta>0\) and each \(T>0\), by~\cite[Lem.~8]{Fleischmann} there exists a constant \(C=C(\beta,T)>0\) such that for all \(0<t\le T\) and all \(x,y\in\R^3\setminus\{0\}\), 
\begin{align}\label{PtBetaBounds}
P_t(x,y)
\;\le\;
P_t^\beta(x,y)
\;\le\;
P_t(x,y)
+
C\,t^{-1/2}\,
\frac{e^{-\frac{|x|^2}{4t}}}{|x|}
\frac{e^{-\frac{|y|^2}{4t}}}{|y|}\,,
\end{align}
where \(P_t^\beta(x,y)\) is defined in~\eqref{DefPointKer3dBeta} and \(P_t(x,y)\) denotes the three-dimensional free heat kernel. For every \(L>0\), there exists a constant \(C_L>0\) such that, for all \(T,\beta>0\) with \(\beta\sqrt T\le L\) and all
\(x\in\R^3\setminus\{0\}\), the function $Q_T^\beta(x)$ defined in~\eqref{DefPointKer3dBeta} has the following bounds from~\cite[Lem.~5.5]{Mian2}.
\begin{align}
\frac{\sqrt{T}}{|x|\sqrt{\pi}}
e^{-\frac{|x|^2}{4T}}
\,\le\, &
1+Q_T^\beta(x)
\le
C_L\Bigl(1+\frac{\sqrt T}{|x|}\Bigr)e^{-4\pi\beta|x|}
\,, \label{OnePlusQBetaBoundsOld}
\\
\big|\nabla Q_T^\beta(x)\big|
\,\le\, &
C_L
\left[
\frac{1}{|x|}
+\frac{1+\sqrt{T}}{|x|^2}
+\frac{1}{|x|\sqrt{T}}
\right]
e^{-4\pi\beta|x|}
\,. \label{GradQBetaBoundOld}
\end{align}

\begin{lemma}[Small-time Gaussian estimates for \(Q_t^\beta\)]
\label{LemSmallTimeGaussianEstimate}
Fix \(T,\beta>0\). There exists \(C=C(T,\beta)>0\) such that, for all
\(0<t\le T/2\) and all \(x\in\mathbb R^3\setminus\{0\}\)
\begin{enumerate}[(i)]
    \item $Q_t^\beta(x) \le    C\frac{\sqrt t}{|x|}    e^{-\frac{|x|^2}{16t}}$,
    \item $    |\nabla_x Q_t^\beta(x)| \le C     \left(    \frac1{|x|}    +    \frac{\sqrt t}{|x|^2}  \right)     e^{-\frac{|x|^2}{16t}}$.
\end{enumerate}
\end{lemma}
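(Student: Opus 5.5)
We work directly from the explicit formula~\eqref{DefH},
\[
Q_t^\beta(x)=\frac{2t}{\beta|x|}\int_0^\infty\big(e^{4\pi\beta w}-1\big)P_t(|x|+w)\,dw,
\qquad P_t(r)=(4\pi t)^{-3/2}e^{-r^2/(4t)},
\]
and split the Gaussian. Since \((|x|+w)^2\ge |x|^2+w^2\) for \(w\ge0\), we have \(e^{-(|x|+w)^2/(4t)}\le e^{-|x|^2/(4t)}e^{-w^2/(4t)}\). We keep only a quarter of the \(|x|^2\) decay, writing \(e^{-|x|^2/(4t)}\le e^{-|x|^2/(16t)}\), and use the rest where it is needed below. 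For (i) we also use \(e^{4\pi\beta w}-1\le 4\pi\beta w\,e^{4\pi\beta w}\). This gives
\[
Q_t^\beta(x)\le \frac{8\pi t}{|x|}(4\pi t)^{-3/2}e^{-\frac{|x|^2}{16t}}\int_0^\infty w\,e^{4\pi\beta w-\frac{w^2}{4t}}\,dw .
\]
The key step is to bound the last integral by a constant times \(t\), uniformly for \(0<t\le T/2\). Substituting \(w=\sqrt t\,v\), it equals \(t\int_0^\infty v\,e^{4\pi\beta\sqrt t\,v-v^2/4}\,dv\). Because \(\sqrt t\le\sqrt{T}\), this is at most \(t\,C(\beta,T)\). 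Collecting powers of \(t\), we get \(t\cdot t^{-3/2}\cdot t=\sqrt t\), which is exactly (i).

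For (ii), \(Q_t^\beta\) is radial, so \(|\nabla Q_t^\beta(x)|=|\partial_r \bar Q_t^\beta(r)|\) at \(r=|x|\). We differentiate under the integral sign, which is justified by the Gaussian factor. The derivative has two pieces. The first comes from the prefactor \(1/r\), which contributes \(\bar Q_t^\beta(r)/r\); by (i) this is at most \(C\sqrt t\,|x|^{-2}e^{-|x|^2/(16t)}\). The second is
\[
\frac{2t}{\beta r}\int_0^\infty\big(e^{4\pi\beta w}-1\big)\,\frac{r+w}{2t}\,P_t(r+w)\,dw .
\]
To bound it, we absorb the factor \((r+w)/t\) into the discarded part of the Gaussian, using
\[
\frac{r+w}{\sqrt t}\,e^{-\frac{(r+w)^2}{8t}}\le C .
\]
This costs one factor of \(\sqrt t\) and leaves \(e^{-(r+w)^2/(8t)}\le e^{-r^2/(16t)}e^{-w^2/(8t)}\). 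Repeating the computation of (i) with \(w^2/(8t)\) in place of \(w^2/(4t)\), the second piece is at most \(C\,\sqrt t/|x|\cdot t^{-1/2}\,e^{-|x|^2/(16t)}=C|x|^{-1}e^{-|x|^2/(16t)}\). Adding the two pieces gives (ii).

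The main obstacle is the bookkeeping of powers of \(t\) against the exponential growth \(e^{4\pi\beta w}\). The point is that after the scaling \(w=\sqrt t\,v\), the exponent \(4\pi\beta\sqrt t\,v\) stays uniformly controlled because \(t\le T/2\), and the linear factor \(w\) from \(e^{4\pi\beta w}-1\) supplies the extra \(\sqrt t\) needed to reach the \(\sqrt t\) in (i) and the \(\sqrt t/|x|^2\) term in (ii). The restriction \(t\le T/2\) itself is not essential; it only fixes the constant \(C\).
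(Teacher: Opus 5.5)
Your proof is correct and follows essentially the same route as the paper. Both arguments start from the explicit formula \eqref{DefH}, use $e^a-1\le ae^a$, and split the radial derivative into two terms, with the $\bar Q_t^\beta(r)/r$ term handled by part (i). The differences are only in bookkeeping. You control $e^{4\pi\beta w}$ by splitting $(r+w)^2\ge r^2+w^2$ and rescaling $w=\sqrt t\,v$, whereas the paper completes the square to get $e^{4\pi\beta w}\le e^{16\pi^2\beta^2T}e^{(r+w)^2/(8t)}$. You also absorb the factor $(r+w)/\sqrt t$ via $\sup_{s\ge0}se^{-s^2/8}<\infty$, where the paper instead bounds the Gaussian second moment $\int_r^\infty u^2e^{-u^2/(8t)}\,du$.
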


\begin{proof}
Recalling that $P_t(r) = (4\pi t)^{-3/2}
e^{-r^2/(4t)}$ denotes the radial form of the three-dimensional free heat kernel, it follows from~\eqref{DefH} that
\begin{align}\label{DefH2nd}
Q_t^\beta(x)
=
\frac{1}{4\pi\beta |x|\sqrt{\pi t}}
\int_0^\infty
\bigl(e^{4\pi\beta w}-1\bigr)
e^{-\frac{(|x|+w)^2}{4t}}
\,dw.
\end{align}

\noindent Part (i): Set \(r=|x|\).
Observe that for \(w\ge0\) and \(0<t\le T/2\), we have
\begin{align}
4\pi\beta w-\frac{w^2}{8t}
=
-\frac{\bigl(w-16\pi\beta t\bigr)^2}{8t}
+
32\pi^2\beta^2t
\le
32\pi^2\beta^2t
\le
16\pi^2\beta^2T ,
\nonumber
\end{align}
where the final inequality uses \(0<t\le T/2\). Therefore,
\begin{align}
e^{4\pi\beta w}
\le
e^{16\pi^2\beta^2T}
e^{\frac{w^2}{8t}}
\le
e^{16\pi^2\beta^2T}
e^{\frac{(r+w)^2}{8t}},
\nonumber
\end{align}
where the second inequality uses \(w^2\le (r+w)^2\). Using the elementary inequality
\(e^a-1\le ae^a\) for \(a\ge0\) with \(a=4\pi\beta w\), together with the above estimate, we obtain
\begin{align} \label{AnInequinProof2}
e^{4\pi\beta w}-1
\le
4\pi\beta w\,e^{4\pi\beta w}
\le
4\pi\beta w\,e^{16\pi^2\beta^2T}
e^{\frac{(r+w)^2}{8t}}
=
Cw\,e^{\frac{(r+w)^2}{8t}},
\end{align}
where \(C=4\pi\beta e^{16\pi^2\beta^2T}\). Substituting this estimate into~\eqref{DefH2nd} and absorbing the prefactor into the constant \(C\), we obtain the inequality below,
\[
Q_t^\beta(x)
\le
\frac{C}{r\sqrt t}
\int_0^\infty
w
e^{\frac{(r+w)^2}{8t}}
e^{-\frac{(r+w)^2}{4t}}
dw
=
\frac{C}{r\sqrt t}
\int_0^\infty
w
e^{-\frac{(r+w)^2}{8t}}
dw
=
\frac{C}{r\sqrt t}
\int_r^\infty
(u-r)e^{-\frac{u^2}{8t}}du,
\]
where the second equality uses the change of variables \(u=r+w\). Since the last integral is bounded by $\int_r^\infty u e^{-\frac{u^2}{8t}} du = 4t e^{-\frac{r^2}{8t}}$, combining the constants and using $e^{-\frac{r^2}{8t}} \le e^{-\frac{r^2}{16t}}$ yields the desired bound. \vspace{.2cm}

\noindent Part (ii): Let \(\bar Q_t^\beta:[0,\infty)\to\mathbb R\) denote the radial profile of \(Q_t^\beta\), defined by $\bar Q_t^\beta(r):=Q_t^\beta(x)$ for $|x|=r$ and all $r\in[0,\infty)$. Then, using that $\big| \nabla_x Q_t^\beta(x)\big| = -\frac{\partial}{\partial r}\bar{Q}_t^\beta(r)$ with $r=|x|$, and recalling~\eqref{DefH2nd}, we obtain
\begin{align}
\big| \nabla_x Q_t^\beta(x)\big|
\,=\,&
-\frac{\partial}{\partial r}\,
\frac{1}{4\pi\beta \, r\, \sqrt{\pi t}}
\int_{0}^{\infty}\big(e^{4\pi\beta w}\,-\, 1\big)\,
e^{-\frac{(r+w)^2}{4t}}\,dw
\nonumber\\
\,=\,&
\frac{1}{r}\, Q_t^{\beta}(x)
\,+\,
\frac{C}{ r \sqrt{t}}
\int_{0}^{\infty}\big(e^{4\pi\beta w}\, -\, 1\big)\,
\frac{r+w}{t}\,
e^{-\frac{(r+w)^2}{4t}}\,dw \nonumber \\
\,\le \, &
C\frac{\sqrt t}{r^2} e^{-\frac{r^2}{16t}}
\,+\,
\frac{C}{r t^{3/2}}
\int_0^\infty
w
e^{\frac{(r+w)^2}{8t}}
(r+w)
e^{-\frac{(r+w)^2}{4t}}\,dw, \nonumber
\end{align}
where the inequality uses Part~(i) and~\eqref{AnInequinProof2}, and \(C=C(T,\beta)>0\) is obtained by combining all the constants. The integral above can be bounded as
\[
\int_0^\infty
w(r+w)
e^{-\frac{(r+w)^2}{8t}}\,dw 
=
\int_r^\infty
(u-r)u e^{-\frac{u^2}{8t}}\,du 
\le
\int_r^\infty
u^2 e^{-\frac{u^2}{8t}}\,du
=
\int_r^\infty
u^2 e^{-\frac{u^2}{16t}}e^{-\frac{u^2}{16t}}\,du ,
\]
where the second equality uses the change of variables \(u=r+w\). Observe that the integral above is bounded by $ e^{-\frac{r^2}{16t}} \int_0^\infty u^2 e^{-\frac{u^2}{16t}}\,du = C t^{3/2} e^{-\frac{r^2}{16t}}$. Substituting these observations into the above bound for $\big| \nabla_x Q_t^\beta(x)\big|$ yields the desired bound.
\end{proof}

\subsection{Proof of Lemma~\ref{LemmaL2IncrementY}} \label{ProofLemmaL2IncrementY}
\begin{proof}

Using the elementary inequality \( |a-b|^2\le 2|a|^2+2|b|^2 \)
together with the bound \( |\mathbb Q_t^\beta(y)|\le |y| \), we obtain the inequality below.
\[
\int_{\mathbb R^3}
\mathlarger p_{0,t}^{T,\beta}(x,y)
\left|
\mathbb Q_{T-t}^{\beta}(y)-\mathbb Q_t^\beta(x)
\right|^2dy
\le
2\int_{\mathbb R^3}
\mathlarger p_{0,t}^{T,\beta}(x,y)|y|^2dy
+
2|x|^2 .
\]
Thus, it remains to show that the second moment under the transition density is finite. Using~\eqref{FirstTrans}, we obtain the first equality below.
\begin{align}
&\int_{\mathbb R^3}
\mathlarger p_{0,t}^{T,\beta}(x,y)|y|^2dy \nonumber \\
&=
\frac{1}{1+Q_T^\beta(x)}
\int_{\mathbb R^3}
|y|^2
\big(1+Q_{T-t}^{\beta}(y)\big)
P_t^\beta(x,y)\,dy \nonumber \\
&\le
\int_{\mathbb R^3}
|y|^2
\big(1+Q_{T-t}^{\beta}(y)\big)
P_t(x,y)\,dy 
+
C\,t^{-1/2}\,\frac{e^{-\frac{|x|^2}{4t}}}{|x|}\,
\int_{\mathbb R^3}
|y|^2
\big(1+Q_{T-t}^{\beta}(y)\big)
\frac{e^{-\frac{|y|^2}{4t}}}{|y|}\,dy \nonumber \\
&\le
C\int_{\mathbb R^3}
|y|^2
\left(1+\frac{\sqrt T}{|y|}\right)
P_t(x,y)\,dy
+
C t^{-1/2}
\frac{e^{-\frac{|x|^2}{4t}}}{|x|}
\int_{\mathbb R^3}
|y|^2
\left(1+\frac{\sqrt T}{|y|}\right)
\frac{e^{-\frac{|y|^2}{4t}}}{|y|}\,dy . \nonumber
\end{align}
The first inequality follows from the fact that \(1+Q_T^\beta(x)\ge 1\) and the estimate~\eqref{PtBetaBounds}, whereas the second inequality holds by~\eqref{OnePlusQBetaBoundsOld} applied with \(L=\beta\sqrt T\). The first integral is also finite since $|y|^2(1+\sqrt T\,|y|^{-1}) = |y|^2+\sqrt T\,|y|$, and the three-dimensional free heat kernel \(P_t(x,\cdot)\) has finite moments of every order. The second integral is finite since $|y|^2(1+\sqrt T\,|y|^{-1})|y|^{-1} = |y|+\sqrt T$, and hence the integrand is bounded by $C(|y|+\sqrt T)e^{-\frac{|y|^2}{4t}}$, which is integrable over \(\mathbb R^3\).
\end{proof}

\subsection{Proof of Lemma~\ref{LemmaKbounds1}} \label{ProofLemmaKbounds1}

We begin by proving two auxiliary lemmas that will be used in the proof of Lemma~\ref{LemmaKbounds1}, which is given at the end of this section.

\begin{lemma} \label{LemFreeHeatWeightedGradient}
Fix \(T,\beta>0\). For every \(x\in\mathbb R^3\setminus\{0\}\),
\[
\int_0^T
\int_{\mathbb R^3}
P_s(x,y)
\frac{|\nabla_y Q_{T-s}^\beta(y)|^2}
     {(1+Q_{T-s}^\beta(y))^3}
\,dy\,ds
<\infty .
\]
\end{lemma}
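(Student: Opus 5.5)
The plan is to bound the integrand pointwise by something integrable against the free heat kernel. We split the time integral according to whether $T-s\ge T/2$ or $T-s\le T/2$, that is, $s\in[0,T/2]$ or $s\in[T/2,T]$. The two regimes behave differently: one is governed by the global bounds \eqref{OnePlusQBetaBoundsOld}--\eqref{GradQBetaBoundOld}, the other by the small-time Gaussian estimates of Lemma~\ref{LemSmallTimeGaussianEstimate}.

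\emph{Regime $s\in[0,T/2]$.} Here $r:=T-s\in[T/2,T]$, so $\sqrt r$ is comparable to $\sqrt T$. Write $\rho=|y|$. By the lower bound in \eqref{OnePlusQBetaBoundsOld},
\[
1+Q_r^\beta(y)\ \ge\ \frac{\sqrt{r}}{\rho\sqrt\pi}e^{-\rho^2/(4r)} .
\]
Combining this with $1+Q\ge1$ gives, for $\rho\le1$, the bound $(1+Q)^{-3}\le C\rho^3$. By \eqref{GradQBetaBoundOld}, $|\nabla Q_r^\beta(y)|^2\le C\rho^{-4}$ for $\rho\le1$ and $\le C$ for $\rho\ge1$. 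Hence the integrand ratio is bounded by $C/\rho$ near the origin and by a constant away from it. Since
\[
\sup_{s\le T/2}\int P_s(x,y)\bigl(1+|y|^{-1}\bigr)\,dy<\infty
\]
(the $|y|^{-1}$ singularity is locally integrable in $\mathbb R^3$, and $P_s(x,\cdot)$ is bounded near $0$ when $s$ is small, because $x\ne0$), this part is finite.

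\emph{Regime $s\in[T/2,T]$.} Here $r=T-s\in(0,T/2]$ can be small, and this is the main obstacle: the factor $(1+Q)^{-3}$ no longer controls $|\nabla Q|^2$ uniformly as $r\downarrow0$. Instead we use $(1+Q)^{-3}\le1$ together with Lemma~\ref{LemSmallTimeGaussianEstimate}(ii), which gives
\[
|\nabla Q_r^\beta(y)|^2\le C\bigl(\rho^{-2}+r\rho^{-4}\bigr)e^{-\rho^2/(8r)} .
\]
On this range $s\ge T/2$, so $P_s(x,y)\le CT^{-3/2}$ is uniformly bounded. It then suffices to show
\[
\int_0^{T/2}\int_{\mathbb R^3}\bigl(\rho^{-2}+r\rho^{-4}\bigr)e^{-\rho^2/(8r)}\,dy\,dr<\infty .
\]
The $\rho^{-2}$ term integrates in polar coordinates to $C\int_0^\infty e^{-\rho^2/(8r)}d\rho=C\sqrt r$. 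The $r\rho^{-4}$ term, however, gives $r\int_0^\infty\rho^{-2}e^{-\rho^2/(8r)}d\rho$, which diverges at $\rho=0$.

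We repair this by keeping $(1+Q)^{-3}$ near the origin. The lower bound in \eqref{OnePlusQBetaBoundsOld} gives $(1+Q_r^\beta(y))^{-3}\le C\rho^3 r^{-3/2}e^{3\rho^2/(4r)}$. That exponential growth is harmful unless $\rho\lesssim\sqrt r$, so we split at $\rho=\sqrt r$:
\begin{itemize}
\item On $\rho\le\sqrt r$ we use $(1+Q)^{-3}\le C\rho^3r^{-3/2}$, so the integrand is at most $C(\rho r^{-3/2}+\rho^{-1}r^{-1/2})$. Integrating over the ball of radius $\sqrt r$ gives $C(r^{1/2}+r^{1/2})$, which is integrable in $r$.
\item On $\rho\ge\sqrt r$ we use $(1+Q)^{-3}\le1$. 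Then $r\rho^{-4}\le\rho^{-2}$, and the bound $C\sqrt r$ from before applies.
\end{itemize}

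Summing the two regimes yields the lemma. The expected difficulty is exactly this near-origin, small-$r$ balancing; the remaining steps are routine polar-coordinate integrals.
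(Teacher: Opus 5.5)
Your proposal is correct and follows essentially the same route as the paper: you split the time integral at $T/2$, handle $s\le T/2$ with \eqref{OnePlusQBetaBoundsOld}--\eqref{GradQBetaBoundOld} and a split at $|y|=1$, and handle $s\ge T/2$ using $P_s(x,y)\le C$ with a split at $|y|=\sqrt{T-s}$, applying the lower bound on $1+Q_{T-s}^\beta$ inside the ball and Lemma~\ref{LemSmallTimeGaussianEstimate}(ii) outside it. The differences are minor: inside the ball you use the small-time gradient bound rather than \eqref{GradQBetaBoundOld}, which gives order $\sqrt{r}$ instead of $r^{-1/2}$ after the $y$-integration (both integrable), and your uniform-in-$s$ justification, via $x\neq0$, for integrating $|y|^{-1}$ against $P_s(x,\cdot)$ is more careful than the paper's appeal to local integrability.
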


\begin{proof}
By splitting the time integral, we may write the double integral above as the sum of the following two terms.
\begin{align}\label{TwoTermsFreeHeatWeightedGradient}
\int_0^{T/2}
\int_{\mathbb R^3}
P_s(x,y)
\frac{|\nabla_y Q_{T-s}^\beta(y)|^2}
     {(1+Q_{T-s}^\beta(y))^3}
\,dy\,ds
+
\int_{T/2}^{T}
\int_{\mathbb R^3}
P_s(x,y)
\frac{|\nabla_y Q_{T-s}^\beta(y)|^2}
     {(1+Q_{T-s}^\beta(y))^3}
\,dy\,ds 
\end{align}
Let us denote the two terms above by \(I_1\) and \(I_2\), respectively, and estimate each term separately. \vspace{.3cm}

\noindent \textit{First term $I_1$.} We split the spatial integral into two regions and, on the region \(|y|>1\), using the trivial bound \(1+Q_{T-s}^\beta(y)\ge1\), we obtain
\begin{align} 
I_1
\le
\int_0^{T/2}
\int_{\{|y|\le 1\}}
P_s(x,y)
\frac{|\nabla_y Q_{T-s}^\beta(y)|^2}
     {(1+Q_{T-s}^\beta(y))^3}
dy ds
+
\int_0^{T/2}
\int_{\{|y|>1\}}
P_s(x,y)
|\nabla_y Q_{T-s}^\beta(y)|^2
dy ds . \nonumber
\end{align}
We denote the two terms above by $I_1^{(1)}$ and $I_1^{(2)}$, respectively. \vspace{.2cm}

\noindent \textit{Estimate of \(I_1^{(1)}\).} Since \(0<s\le T/2\), we have \(T-s\in[T/2,T]\) and \(\beta\sqrt{T-s}\le\beta\sqrt T\). Hence, by~\eqref{GradQBetaBoundOld}, applied with \(L=\beta\sqrt{T}\) and \(T\) replaced by \(T-s\), there exists a constant \(C=C(T,\beta)>0\) such that
\begin{align} \label{GradBoundQinProof}
\big|\nabla_y Q_{T-s}^\beta(y)\big|^2
\le
C
\left(
\frac1{|y|}
+
\frac1{|y|^2}
\right)^2
e^{-8\pi\beta |y|}
\le
C
\left(
\frac1{|y|^2}
+
\frac1{|y|^3}
+
\frac1{|y|^4}
\right)
e^{-8\pi\beta |y|}.
\end{align}
On the other hand, by~\eqref{OnePlusQBetaBoundsOld}, again with \(T\) replaced by \(T-s\) and using \(T-s\in[T/2,T]\), we obtain $(1+Q_{T-s}^\beta(y))^{-3} \le C |y|^3 e^{\frac{3|y|^2}{4(T-s)}}$. Using this estimate together with the bound~\eqref{GradBoundQinProof}, we obtain
\begin{align} \label{GradQuotient}
\frac{
|\nabla_y Q_{T-s}^\beta(y)|^2
}{
(1+Q_{T-s}^\beta(y))^3
}
\le
C
\left(
|y|
+
1
+
\frac1{|y|}
\right)
e^{ -8\pi\beta |y|} e^{\frac{3|y|^2}{4(T-s)}}
\le
C
\left(
|y|+1+\frac1{|y|}
\right),
\end{align}
where the last inequality uses that since \(|y|\le1\) and \(T-s\in[T/2,T]\), the factor $e^{\frac{3|y|^2}{4(T-s)}}$ is bounded by a constant depending only on \(T\). Therefore,
\begin{align} \nonumber
I_1^{(1)}
&:=
\int_0^{T/2}
\int_{|y|\le1}
P_s(x,y)
\frac{|\nabla_y Q_{T-s}^\beta(y)|^2}
     {(1+Q_{T-s}^\beta(y))^3}
\,dy\,ds \nonumber \\
&\le
C
\int_0^{T/2}
\int_{|y|\le1}
P_s(x,y)
\left(
|y|+1+\frac1{|y|}
\right)
\,dy\,ds .
\end{align}
The right-hand side is finite because
$
\int_{|y|\le1}
\big(
|y|+1+|y|^{-1}
\big)
dy
=
4\pi\int_0^1
\left(
r^3+r^2+r
\right)\,dr
<\infty, $ and, for fixed \(x\neq0\), \(P_s(x,y)\) is locally integrable in \((s,y)\). \vspace{.2cm}

\noindent \textit{Estimate of \(I_1^{(2)}\).} Using the gradient estimate~\eqref{GradBoundQinProof}, we obtain the first inequality below.
\begin{align}
I_1^{(2)}
&:=\int_0^{T/2}
\int_{\{|y|>1\}}
P_s(x,y)
|\nabla_y Q_{T-s}^\beta(y)|^2
dy ds \nonumber \\
& \le
C
\int_0^{T/2}
\int_{|y|>1}
P_s(x,y)
\left(
\frac1{|y|^2}
+
\frac1{|y|^3}
+
\frac1{|y|^4}
\right)
e^{-8\pi\beta |y|}
dy ds
\le
C\int_0^{T/2}\int_{\mathbb R^3} P_s(x,y)
dy ds \nonumber
\end{align}
The final inequality follows from the bound
$\left( \frac1{|y|^2} + \frac1{|y|^3} + \frac1{|y|^4}
\right) e^{-8\pi\beta|y|} \le 3$ for \(|y|>1\), and an enlargement of the domain of integration. Since $\int_0^{T/2}\int_{\mathbb R^3} P_s(x,y)\,dy\,ds = \int_0^{T/2}1\,ds < \infty$, the term \(I_1^{(2)}\) is finite. Combining the estimates of \(I_1^{(1)}\) and \(I_1^{(2)}\), we conclude that \(I_1<\infty\). \vspace{.3cm}

\noindent \textit{Second term \(I_2\).} Since \(s\in[T/2,T]\), we have
$P_s(x,y)= (4\pi s)^{-3/2} e^{-\frac{|x-y|^2}{4s}} \le C$, where \(C=C(T)>0\). Therefore, the second term \(I_2\) in~\eqref{TwoTermsFreeHeatWeightedGradient} is bounded by
\[
I_2
\le
C
\int_{T/2}^{T}
\int_{\mathbb R^3}
\frac{|\nabla_y Q_{T-s}^\beta(y)|^2}
     {(1+Q_{T-s}^\beta(y))^3}
\,dy\,ds
=
C
\int_0^{T/2}
\int_{\mathbb R^3}
\frac{|\nabla_y Q_t^\beta(y)|^2}
     {(1+Q_t^\beta(y))^3}
\,dy\,dt ,
\]
where the equality uses the change of variables \(t=T-s\). Next, we split the \(y\)-integral into the two regions \(|y|\le \sqrt t\) and \(|y|>\sqrt t\) to obtain
\begin{align} \label{TwoTermsSplitI2}
I_2
&\le
C
\int_0^{T/2}
\int_{\{|y|\le\sqrt t\}}
\frac{|\nabla_y Q_t^\beta(y)|^2}
     {(1+Q_t^\beta(y))^3}
\,dy\,dt
+
C
\int_0^{T/2}
\int_{\{|y|>\sqrt t\}}
\frac{|\nabla_y Q_t^\beta(y)|^2}
     {(1+Q_t^\beta(y))^3}
\,dy\,dt \nonumber \\
&=:
I_2^{(1)} + I_2^{(2)}.
\end{align}

\noindent \textit{Estimate of \(I_2^{(1)}\).} Since \(0<t\le T/2\), we have \(\beta\sqrt t\le\beta\sqrt{T/2}\). Hence, by~\eqref{GradQBetaBoundOld}, applied with \(L=\beta\sqrt{T/2}\) and \(T\) replaced by \(t\), there exists a constant \(C=C(T,\beta)>0\) such that
\[
\big|\nabla_y Q_t^\beta(y)\big|^2
\le
C
\left[
\frac1{|y|}
+
\frac{1+\sqrt t}{|y|^2}
+
\frac1{|y|\sqrt t}
\right]^2
e^{-8\pi\beta|y|}
\le
\frac{C}{|y|^4},
\]
where the last inequality uses that \(|y|\le\sqrt t\le\sqrt{T/2}\), and hence \(|y|^{-1}\le C|y|^{-2}\) and \(\big(|y|\sqrt t\big)^{-1}\le |y|^{-2}\). Also, by~\eqref{OnePlusQBetaBoundsOld}, we have $(1+Q_t^\beta(y))^{-3} \le C|y|^3(\sqrt t)^{-3} e^{3|y|^2/(4t)}
\le Ct^{-3/2}|y|^3$, where the last inequality uses that \(|y|\le\sqrt t\). Using this estimate together with the bound above for $\big|\nabla_y Q_t^\beta(y)\big|^2$, we obtain
\[
\frac{
|\nabla_y Q_t^\beta(y)|^2
}{
(1+Q_t^\beta(y))^3
}
\le
C
\frac{|y|^3}{t^{3/2}}
\cdot
\frac1{|y|^4}
=
\frac{C}{|y|t^{3/2}},
\]
which implies the inequality below.
\begin{align}
I_2^{(1)}
:=
C\int_0^{T/2}
\int_{\{|y|\le\sqrt t\}}
\frac{|\nabla_y Q_t^\beta(y)|^2}
     {(1+Q_t^\beta(y))^3}
\,dy\,dt
&\le
C
\int_0^{T/2}
\frac1{t^{3/2}}
\int_0^{\sqrt t} r\,dr\,dt
=
C
\int_0^{T/2}
\frac1{\sqrt t}\,dt  \nonumber
\end{align}
The last integral is finite, and hence \(I_2^{(1)}<\infty\).  \vspace{.2cm}

\noindent \textit{Estimate of \(I_2^{(2)}\).} Using the trivial bound \(1+Q_t^\beta(y)\ge1\) and Lemma~\ref{LemSmallTimeGaussianEstimate}(ii), we obtain
\begin{align}\label{I22Bound}
\frac{|\nabla_y Q_t^\beta(y)|^2}{(1+Q_t^\beta(y))^3}
\le
|\nabla_y Q_t^\beta(y)|^2
\le
C
\left(
\frac1{|y|^2}
+
\frac{2\sqrt t}{|y|^3}
+
\frac{t}{|y|^4}
\right)
e^{-\frac{|y|^2}{8t}}
\le
\frac{C}{|y|^2}
e^{-\frac{|y|^2}{8t}} ,
\end{align}
where the final inequality uses that $2\sqrt t\,|y|^{-3} \le 2|y|^{-2}$ and \(t|y|^{-4}\le |y|^{-2}\) since  \(|y|>\sqrt t\). Therefore, the second term \(I_2^{(2)}\) in~\eqref{TwoTermsSplitI2} is bounded by
\begin{align}
I_2^{(2)}
&\le
C
\int_0^{T/2}
\int_{\{|y|>\sqrt t\}}
\frac1{|y|^2}
e^{-\frac{|y|^2}{8t}}
\,dy\,dt \nonumber \\
&=
C
\int_0^{T/2}
\int_{\sqrt t}^{\infty}
e^{-\frac{r^2}{8t}}
\,dr\,dt 
=
C
\int_0^{T/2}
\sqrt t
\int_1^\infty
e^{-\frac{u^2}{8}}
\,du\,dt
<\infty ,
\end{align}
where the first equality follows by changing into spherical coordinates, the second equality uses the change of variables \(r=\sqrt t\,u\), and the finiteness follows since
\(\int_1^\infty e^{-u^2/8}\,du<\infty\) and \(\int_0^{T/2}\sqrt t\,dt<\infty\).

Combining the estimates for \(I_2^{(1)}\) and \(I_2^{(2)}\), we get \(I_2<\infty\). Together with \(I_1<\infty\), this completes the proof.
\end{proof}

\begin{lemma} \label{LemSingularHeatWeightedGradient}
Fix \(T,\beta>0\). For every \(x\in\mathbb R^3\setminus\{0\}\),
\[
\int_0^T
s^{-1/2}
\frac{e^{-\frac{|x|^2}{4s}}}{|x|}
\int_{\mathbb R^3}
\frac{e^{-\frac{|y|^2}{4s}}}{|y|}
\frac{|\nabla_y Q_{T-s}^\beta(y)|^2}
     {(1+Q_{T-s}^\beta(y))^3}
\,dy\,ds
<\infty .
\]
\end{lemma}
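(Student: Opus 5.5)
We follow the structure of the proof of Lemma~\ref{LemFreeHeatWeightedGradient}: split the time integral at \(T/2\) and write the quantity as \(J_1+J_2\), with \(J_1\) over \(s\in(0,T/2]\) and \(J_2\) over \(s\in[T/2,T]\). Throughout we write
\[
F_{T-s}(y):=\frac{|\nabla_y Q_{T-s}^\beta(y)|^2}{(1+Q_{T-s}^\beta(y))^3}.
\]
The outer prefactor \(s^{-1/2}e^{-|x|^2/(4s)}/|x|\) is bounded by a constant \(C(x,T)\) uniformly in \(s\in(0,T]\). Indeed, for fixed \(x\neq0\), the factor \(e^{-|x|^2/(4s)}\) kills the \(s^{-1/2}\) singularity as \(s\downarrow0\). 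So the whole matter reduces to bounding \(\int_{\mathbb R^3}|y|^{-1}e^{-|y|^2/(4s)}F_{T-s}(y)\,dy\), integrated in \(s\).

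\emph{Term \(J_1\).} Here \(T-s\in[T/2,T]\), so we may reuse the estimates already derived. On \(\{|y|\le1\}\), bound~\eqref{GradQuotient} gives \(F_{T-s}(y)\le C(|y|+1+|y|^{-1})\). Hence the integrand is at most \(C(1+|y|^{-1}+|y|^{-2})\), which is integrable on the unit ball in \(\mathbb R^3\), uniformly in \(s\). On \(\{|y|>1\}\), use \(1+Q\ge1\) and~\eqref{GradBoundQinProof}: then \(F_{T-s}(y)\le 3\), and \(|y|^{-1}e^{-|y|^2/(4s)}\le e^{-|y|^2/(4s)}\). The integral of this last bound over \(\mathbb R^3\) is \(C s^{3/2}\), which is bounded. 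Integrating in \(s\) over \((0,T/2]\) against the bounded prefactor gives \(J_1<\infty\).

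\emph{Term \(J_2\).} Drop the factor \(e^{-|y|^2/(4s)}\le1\) and change variables to \(t=T-s\in[0,T/2]\). This leaves
\[
C\int_0^{T/2}\int_{\mathbb R^3}|y|^{-1}F_t(y)\,dy\,dt,
\]
which we split at \(|y|=\sqrt t\) as in~\eqref{TwoTermsSplitI2}.
\begin{itemize}
\item On \(\{|y|\le\sqrt t\}\), the estimate in the proof of \(I_2^{(1)}\) gives \(F_t(y)\le C|y|^{-1}t^{-3/2}\). The integrand is then \(C|y|^{-2}t^{-3/2}\), whose integral over the ball of radius \(\sqrt t\) is \(C\sqrt t\cdot t^{-3/2}=Ct^{-1}\).
\item On \(\{|y|>\sqrt t\}\), bound~\eqref{I22Bound} gives the integrand \(C|y|^{-3}e^{-|y|^2/(8t)}\). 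Its spatial integral is \(C\int_1^\infty u^{-1}e^{-u^2/8}\,du\), a constant, which is harmless.
\end{itemize}

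The main obstacle is the first item of \(J_2\), since \(\int_0 t^{-1}\,dt\) diverges logarithmically. Here I would not drop \(e^{-|y|^2/(4s)}\) so carelessly. Instead I would sharpen the lower bound on \(1+Q_t^\beta\) near the origin. By~\eqref{DefH2nd}, for \(|y|\le\sqrt t\) one has \(Q_t^\beta(y)\ge c\,t/|y|\), because \(e^{4\pi\beta w}-1\ge4\pi\beta w\) and the Gaussian integral of \(w\) is of order \(t\). This improves \((1+Q)^{-3}\) to \(C|y|^3t^{-3}\), a gain over the \(t^{-3/2}\) used before.

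Similarly, I would re-estimate \(|\nabla Q_t^\beta|\) directly from~\eqref{DefH2nd} instead of using~\eqref{GradQBetaBoundOld}. One has \(\bar Q_t^\beta(r)\approx c\,t/r\) for small \(r\), so \(|\nabla Q|\lesssim t/|y|^2\). This yields \(F_t\lesssim |y|^{-1}t^{-1}\), and the spatial integral over \(\{|y|\le\sqrt t\}\) becomes \(\int_{|y|\le\sqrt t}|y|^{-2}t^{-1}\,dy\sim t^{-1/2}\), which is integrable. So the key extra step is proving the sharp small-ball asymptotics \(Q_t^\beta(y)\asymp t/|y|\) and \(|\nabla Q_t^\beta(y)|\lesssim t/|y|^2\) for \(|y|\le\sqrt t\), \(t\le T/2\). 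With these, all pieces are finite.
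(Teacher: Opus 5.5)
Your decomposition (time split at $T/2$, then a spatial split at $|y|=1$ for $s\le T/2$ and at $|y|=\sqrt t$ for $t=T-s$) is the paper's, and your handling of $J_1$ and of $J_2$ on $\{|y|>\sqrt t\}$ is correct. The gap is in the piece you flag yourself, $J_2$ on $\{|y|\le\sqrt t\}$: your repair rests on a wrong small-ball asymptotic. In \eqref{DefH2nd} the prefactor is $(4\pi\beta|y|\sqrt{\pi t})^{-1}$. Since $\int_0^\infty w\,e^{-(|y|+w)^2/(4t)}\,dw\asymp t$, this gives $Q_t^\beta(y)\asymp\sqrt t/|y|$ on $\{|y|\le\sqrt t\}$, not $t/|y|$ (compare \eqref{OnePlusQBetaBoundsOld} and Lemma~\ref{LemSmallTimeGaussianEstimate}(i)). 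Two consequences follow. (a) The bound $Q_t^\beta(y)\ge ct/|y|$ is true but weaker than what was already used: it gives $(1+Q_t^\beta)^{-3}\le C|y|^3t^{-3}$, which is worse, not better, than $C|y|^3t^{-3/2}$ as $t\downarrow0$. (b) The claimed bound $|\nabla Q_t^\beta(y)|\lesssim t/|y|^2$ is false: differentiating \eqref{DefH2nd} in $r=|y|$ shows $|\nabla Q_t^\beta(y)|\ge Q_t^\beta(y)/|y|\ge c\sqrt t/|y|^2$ on this region. Your final bound $F_t\lesssim|y|^{-1}t^{-1}$ happens to be true, since it is weaker than the correct $|y|^{-1}t^{-1/2}$. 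But you reach it only because your two errors, each replacing $\sqrt t$ by $t$, partially cancel. The ``key extra step'' you describe cannot be proved as stated.

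The real culprit in your first attempt is the gradient bound, not the lower bound on $1+Q_t^\beta$. Reusing the $I_2^{(1)}$ estimate means using \eqref{GradQBetaBoundOld}, which on $\{|y|\le\sqrt t\}$ only yields $|\nabla Q_t^\beta|^2\le C|y|^{-4}$ and so loses a factor $t$. The missing ingredient is the small-time estimate of Lemma~\ref{LemSmallTimeGaussianEstimate}(ii), $|\nabla Q_t^\beta(y)|\le C(|y|^{-1}+\sqrt t\,|y|^{-2})$. On $\{|y|\le\sqrt t\}$ it gives $|\nabla Q_t^\beta(y)|^2\le Ct|y|^{-4}$. Combine this with the unchanged and already sharp bound $(1+Q_t^\beta(y))^{-3}\le Ct^{-3/2}|y|^3$. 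You get $|y|^{-1}F_t(y)\le C|y|^{-2}t^{-1/2}$, whose integral over $\{|y|\le\sqrt t\}$ equals $4\pi C$ uniformly in $t$; this is how the paper closes the term. Keeping the factor $e^{-|y|^2/(4s)}$ would not help, since $s\ge T/2\ge t$ makes it at least $e^{-1/4}$ on this region.
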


\begin{proof}

We split the time integral according to \((0,T)=(0,T/2]\cup(T/2,T)\) and write the left-hand side above as the sum of two terms,
\begin{align}
\int_0^{T/2}
&s^{-1/2}
\frac{e^{-\frac{|x|^2}{4s}}}{|x|}
\int_{\mathbb R^3}
\frac{e^{-\frac{|y|^2}{4s}}}{|y|}
\frac{|\nabla_y Q_{T-s}^\beta(y)|^2}
     {(1+Q_{T-s}^\beta(y))^3}
\,dy\,ds \nonumber \\
&+
\int_{T/2}^{T}
s^{-1/2}
\frac{e^{-\frac{|x|^2}{4s}}}{|x|}
\int_{\mathbb R^3}
\frac{e^{-\frac{|y|^2}{4s}}}{|y|}
\frac{|\nabla_y Q_{T-s}^\beta(y)|^2}
     {(1+Q_{T-s}^\beta(y))^3}
\,dy\,ds , \nonumber
\end{align}
and denote the two terms by \(\mathbf I_1\) and \(\mathbf I_2\), respectively. \vspace{.2cm}

\noindent \textit{First term \(\mathbf I_1\).} We split the
\(y\)-integral into the regions \(|y|\le1\) and \(|y|>1\), and write
\(\mathbf I_1 = \mathbf I_1^{(1)} + \mathbf I_1^{(2)}\). We estimate these two terms separately below. \vspace{.2cm}

\noindent \textit{Estimate of $\mathbf I_1^{(1)}$.} Using the estimate~\eqref{GradQuotient}, we obtain the first inequality below.
\begin{align}
\mathbf I_1^{(1)}
&:=
\int_0^{T/2}
s^{-1/2}
\frac{e^{-\frac{|x|^2}{4s}}}{|x|}
\int_{\{|y|\le1\}}
\frac{e^{-\frac{|y|^2}{4s}}}{|y|}
\frac{|\nabla_y Q_{T-s}^\beta(y)|^2}
     {(1+Q_{T-s}^\beta(y))^3}
\,dy\,ds
\nonumber\\
&\le
C
\int_0^{T/2}
s^{-1/2}
\frac{e^{-\frac{|x|^2}{4s}}}{|x|}
\int_{\{|y|\le1\}}
\frac{e^{-\frac{|y|^2}{4s}}}{|y|}
\left(|y|+1+\frac1{|y|}\right)
\,dy\,ds  \nonumber \\
&\le
\frac{C}{|x|}
\int_0^{T/2}
s^{-1/2}
\int_{\{|y|\le1\}}
\left(1+\frac1{|y|^2}\right)
\,dy\,ds
=
\frac{C}{|x|}
\int_0^{T/2}
s^{-1/2}
\int_0^1
\left(r^2+1\right)
\,dr\,ds
< \infty \nonumber
\end{align}
The second inequality uses that \(|y|^{-1}\le |y|^{-2}\), which holds since \(|y|\le1\), and the trivial bounds \(e^{-\frac{|x|^2}{4s}} \le 1\) and \(e^{-\frac{|y|^2}{4s}} \le 1\). The finiteness follows since \(x\neq0\) is fixed and both integrals over \(r\) and \(s\) are finite. \vspace{.2cm}

\noindent \textit{Estimate of $\mathbf I_1^{(2)}$.} Using \(1+Q_{T-s}^\beta(y)\ge1\) and the gradient estimate
\eqref{GradBoundQinProof}, we obtain
\begin{align}
\mathbf I_1^{(2)}
&:=
\int_0^{T/2}
s^{-1/2}
\frac{e^{-\frac{|x|^2}{4s}}}{|x|}
\int_{\{|y|>1\}}
\frac{e^{-\frac{|y|^2}{4s}}}{|y|}
\frac{|\nabla_y Q_{T-s}^\beta(y)|^2}
     {(1+Q_{T-s}^\beta(y))^3}
\,dy\,ds
\nonumber\\
&\le
C
\int_0^{T/2}
s^{-1/2}
\frac{e^{-\frac{|x|^2}{4s}}}{|x|}
\int_{\{|y|>1\}}
\frac{e^{-\frac{|y|^2}{4s}}}{|y|}
\left(
\frac1{|y|^2}
+
\frac1{|y|^3}
+
\frac1{|y|^4}
\right)
e^{-8\pi\beta |y|}
\,dy\,ds . \nonumber
\end{align}
Since \(|y|>1\), we have
$\frac1{|y|}
\left(
\frac1{|y|^2}
+
\frac1{|y|^3}
+
\frac1{|y|^4}
\right)
\le 3$,
and therefore, enlarging the domain of integration, we may bound the above by
\[
\frac{C}{|x|}
\int_0^{T/2}
s^{-1/2}
\int_{\R^3}
e^{-\frac{|y|^2}{4s}}
\,dy\,ds
=
\frac{C}{|x|}
\int_0^{T/2}
s
\,ds
<\infty ,
\]
where the equality uses that $\int_{\mathbb R^3} e^{-\frac{|y|^2}{4s}}\,dy = C s^{3/2}$, and the finiteness follows since \(x\neq0\) is fixed. Therefore, \(\mathbf I_1=\mathbf I_1^{(1)}+\mathbf I_1^{(2)}<\infty\). \vspace{.2cm}

\noindent \textit{Second term \(\mathbf I_2\).} Since \(s\in[T/2,T]\) and \(x\neq0\), we have $|x|^{-1}s^{-1/2}e^{-\frac{|x|^2}{4s}}\le C_{T,x}$, and hence, after the change of variables \(t=T-s\), the inequality below follows.
\begin{align}
\mathbf I_2
&:=
\int_{T/2}^{T}
s^{-1/2}
\frac{e^{-\frac{|x|^2}{4s}}}{|x|}
\int_{\mathbb R^3}
\frac{e^{-\frac{|y|^2}{4s}}}{|y|}
\frac{|\nabla_y Q_{T-s}^\beta(y)|^2}
     {(1+Q_{T-s}^\beta(y))^3}
\,dy\,ds  \nonumber \\
&\le
C_{T,x}
\int_0^{T/2}
\int_{\mathbb R^3}
\frac1{|y|}
\frac{|\nabla_y Q_t^\beta(y)|^2}
     {(1+Q_t^\beta(y))^3}
\,dy\,dt 
\end{align}
Next, we split the \(y\)-integral into the regions \(|y|\le\sqrt t\) and \(|y|>\sqrt t\), and denote the corresponding terms by $\mathbf I_2^{(1)}$ and $\mathbf I_2^{(2)}$, respectively, so that $\mathbf I_2=\mathbf I_2^{(1)}+\mathbf I_2^{(2)}$. \vspace{.2cm}

\noindent \textit{Estimate of $\mathbf I_2^{(1)}$.} By Lemma~\ref{LemSmallTimeGaussianEstimate}(ii), we have
\[
\big|\nabla_y Q_t^\beta(y)\big|^2
\le
C
\left(
\frac1{|y|^2}
+
\frac{2\sqrt t}{|y|^3}
+
\frac{t}{|y|^4}
\right)
e^{-\frac{|y|^2}{8t}}
\le
C\frac{t}{|y|^4},
\]
where the second inequality uses that on the region \(|y|\le\sqrt t\), we have \(t^{-1/2}\le |y|^{-1}\) and $|y|^{-2}\le t|y|^{-4}$. Moreover, by~\eqref{OnePlusQBetaBoundsOld}, we have $(1+Q_t^\beta(y))^{-3} \le Ct^{-3/2}|y|^3$ on \(|y|\le\sqrt t\). Consequently,
\[
\frac1{|y|}
\frac{|\nabla_y Q_t^\beta(y)|^2}
     {(1+Q_t^\beta(y))^3}
\le
\frac{C}{|y|^2\sqrt t},
\]
which implies the inequity below.
\begin{align}
\mathbf I_2^{(1)}
&:=
C_{T,x}\int_0^{T/2}
\int_{\{|y|\le\sqrt t\}}
\frac1{|y|}
\frac{|\nabla_y Q_t^\beta(y)|^2}
     {(1+Q_t^\beta(y))^3}
\,dy\,dt \nonumber \\
&\le
C
\int_0^{T/2}
\frac1{\sqrt t}
\int_0^{\sqrt t} dr\,dt
=
C
\int_0^{T/2} 1\,dt
<\infty, 
\end{align}
where $C=C(T,\beta,x)>0$. \vspace{.2cm}

\noindent \textit{Estimate of $\mathbf I_2^{(2)}$.} Using the estimate~\eqref{I22Bound}, we obtain the inequality below.
\[
\mathbf I_2^{(2)}
:=
C_{T,x}
\int_0^{T/2}
\int_{\{|y|>\sqrt t\}}
\frac1{|y|}
\frac{|\nabla_y Q_t^\beta(y)|^2}
     {(1+Q_t^\beta(y))^3}
\,dy\,dt 
\le
C
\int_0^{T/2}
\int_{\{|y|>\sqrt t\}}
\frac1{|y|^3}
e^{-\frac{|y|^2}{8t}} \,dy\,dt .
\]
By changing to spherical coordinates and using the change of variables \(r=\sqrt t\,u\), we may write the right side above as
\[
C
\int_0^{T/2}
\int_{\sqrt t}^{\infty}
\frac1r
e^{-\frac{r^2}{8t}}
\,dr\,dt 
=
C
\int_0^{T/2}
\int_1^\infty
\frac1u
e^{-\frac{u^2}{8}}
\,du\,dt
<\infty .
\]
The finiteness follows since $\int_1^\infty u^{-1}e^{-\frac{u^2}{8}}\,du<\infty$ and $T<\infty$. Therefore, $\mathbf I_2^{(2)}<\infty$. Consequently, $\mathbf I_2=\mathbf I_2^{(1)}+\mathbf I_2^{(2)}<\infty$.
\end{proof}

\begin{proof}[Proof of Lemma~\ref{LemmaKbounds1}]
For fixed \(x\in\mathbb R^3\), recalling~\eqref{FirstTrans}, \eqref{DefDriftFun}, and~\eqref{DefpFunction}, we obtain
\begin{align}
\int_0^T
\int_{\mathbb R^3}
 \mathlarger{p}_{0,s}^{T,\beta}(x,y)
\,
\bigl|
 S_{T-s}^{\beta}(y)
b_{T-s}^{\beta}(y)
\bigr|^2
\,dy\,ds
\leq
\int_0^T
\int_{\mathbb R^3}
\,P_s^\beta(x,y)
\,
\frac{
|\nabla_y Q_{T-s}^\beta(y)|^2
}{
(1+Q_{T-s}^\beta(y))^3
}
\,dy\,ds , \nonumber 
\end{align}
where the inequality also uses the trivial bound $1+Q_T^\beta(x)\ge1$. Next, using the estimate~\eqref{PtBetaBounds} for the point interaction heat kernel, we may bound the right-hand side above by
\[
\int_0^T
\int_{\mathbb R^3}
P_s(x,y)
\frac{|\nabla_y Q_{T-s}^\beta(y)|^2}
     {(1+Q_{T-s}^\beta(y))^3}
\,dy\,ds
+
C
\int_0^T
s^{-1/2}
\frac{e^{-\frac{|x|^2}{4s}}}{|x|}
\int_{\mathbb R^3}
\frac{e^{-\frac{|y|^2}{4s}}}{|y|}
\frac{|\nabla_y Q_{T-s}^\beta(y)|^2}
     {(1+Q_{T-s}^\beta(y))^3}
\,dy\,ds .
\]
Both of the above terms are finite by Lemma~\ref{LemFreeHeatWeightedGradient} and Lemma~\ref{LemSingularHeatWeightedGradient}, respectively, which completes the proof.
\end{proof}

\section*{Acknowledgement}
The author thanks Yu Gu for drawing attention to the articles
\cite{CranstonKoralovMolchanovVainberg1,CranstonKoralovMolchanovVainberg2}. The author is also grateful to Makoto Nakashima and Rongfeng Sun for helpful discussions related to this work.

\end{document}